\newcommand{\andSep}{\,\,\,\text{ and }\,\,\,}
\newcommand{\CC}{{\mathbb{C}}}
\newcommand{\NN}{{\mathbb{N}}}
\newcommand{\ca}{$C^*$-algebra}
\newcommand{\csuba}{$C^*$-subalgebra}
\newcommand{\subseteqRotatedUp}{\mathrel{\reflectbox{\rotatebox[origin=c]{90}{$\subseteq$}}}}
\DeclareMathOperator{\linspan}{span}
\DeclareMathOperator{\interior}{int_D}
\DeclareMathOperator{\exterior}{cl_D}
\DeclareMathOperator{\Nil}{Nil}
\newenvironment{psmallmatrix}
  {\left(\begin{smallmatrix}}
  {\end{smallmatrix}\right)}
\def\today{\number\day\space\ifcase\month\or   January\or February\or
   March\or April\or May\or June\or   July\or August\or September\or
   October\or November\or December\fi\   \number\year}
\newtheorem{lma}{Lemma}[section]
\newaliascnt{thmCt}{lma}
\newtheorem{thm}[thmCt]{Theorem}
\newaliascnt{corCt}{lma}
\newtheorem{cor}[corCt]{Corollary}
\newaliascnt{prpCt}{lma}
\newtheorem{prp}[prpCt]{Proposition}
\theoremstyle{definition}
\newaliascnt{dfnCt}{lma}
\newtheorem{dfn}[dfnCt]{Definition}
\newaliascnt{rmkCt}{lma}
\newtheorem{rmk}[rmkCt]{Remark}
\newaliascnt{rmksCt}{lma}
\newtheorem{rmks}[rmksCt]{Remarks}
\newaliascnt{exaCt}{lma}
\newtheorem{exa}[exaCt]{Example}
\newaliascnt{pgrCt}{lma}
\newtheorem{pgr}[pgrCt]{}
\newcounter{theoremintro}
\newaliascnt{thmIntroCt}{theoremintro}
\newtheorem{thmIntro}[thmIntroCt]{Theorem}
\newaliascnt{corIntroCt}{theoremintro}
\newtheorem{corIntro}[corIntroCt]{Corollary}
\title{Semiprime ideals in C*-algebras}
\author[Eusebio Gardella]{Eusebio Gardella}
\address{Eusebio Gardella,
Department of Mathematical Sciences, Chalmers University of
Technology and University of Gothenburg, Gothenburg SE-412 96, Sweden.}
\email{gardella@chalmers.se}
\urladdr{www.math.chalmers.se/~gardella}
\author{Kan Kitamura}
\address{Kan Kitamura,
Interdisciplinary Theoretical and Mathematical Sciences Program, RIKEN,
2-1 Hirosawa, Wako Saitama 351-0198, Japan.}
\email{kan.kitamura@riken.jp}
\author{Hannes Thiel}
\address{Hannes~Thiel, 
Department of Mathematical Sciences, Chalmers University of Technology and University of
Gothenburg, Gothenburg SE-412 96, Sweden.}
\email{hannes.thiel@chalmers.se}
\urladdr{www.hannesthiel.org}
\thanks{
The first named author was partially supported by the Swedish Research Council Grant 2021-04561.
The second named author was partially supported by JSPS KAKENHI Grant Number JP22KJ0618 and JST CREST program JPMJCR18T6.
The third named author was partially supported by the Knut and Alice Wallenberg Foundation (KAW 2021.0140).
}
\subjclass[2020]%
{Primary
46L05. % General theory of C*-algebras
Secondary
16N60. % Prime and semiprime associative rings
16W10. % Rings with involution; Lie, Jordan and other nonassociative structures
}
\keywords{prime ideals, semiprime ideals, $C^*$-algebras}
\date{\today}
\begin{document}

%==========================================================================================
\begin{abstract}
We show that a not necessarily closed ideal in a \ca{} is semiprime if and only if it is idempotent, if and only if it is closed under square roots of positive elements.
Among other things, it follows that prime and semiprime ideals in \ca{s} are automatically self-adjoint.

To prove the above, we isolate and study a particular class of ideals, which we call \emph{Dixmier ideals}. 
As it turns out, there is a rich theory of powers and roots for Dixmier ideals. We show that every ideal in a \ca{} is squeezed by Dixmier ideals from inside and outside tightly in a suitable sense, 
from which we are able to deduce information about the ideal in the middle.
\end{abstract}	

\maketitle
%==========================================================================================
%==========================================================================================
\section{Introduction}

%==========================================================================================
Prime and semiprime ideals are fundamental concepts in the structure theory of rings.
They serve as building blocks for factorization and localization techniques which are used to study the global structure of a ring as well as properties of individual elements.
In this paper, we study these ring-theoretic concepts for \ca{s}.
Therefore, by an \emph{ideal} in a \ca{} we will always mean a two-sided ideal that is not necessarily norm-closed. More explicitly, an ideal in a \ca{}~$A$ is an additive subgroup $I\subseteq A$ satisfying $aI\cup Ia\subseteq I$ for all $a\in A$. 
In particular, an ideal in a non-unital \ca{} is not necessarily closed under scalar multiplication. % by complex scalars.

We recall that an ideal $I$ in a \ca{} $A$ is said to be \emph{prime} if $I \neq A$ and if for all ideals $J,K \subseteq A$, whenever $JK$ is contained in $I$ then $J \subseteq I$ or $K \subseteq I$. 
Also, an ideal $I$ is \emph{semiprime} if for all ideals $J \subseteq A$, whenever $J^2$ is contained in $I$ then $J \subseteq I$. 
Equivalently, $I$ is an intersection of prime ideals; see \cite[Section~10]{Lam01FirstCourse2ed}, for example, whose proof still works for non-unital \ca{s}. 

Given ideals $I,J$ in a \ca{}, we follow the ring theoretic convention that~$IJ$ denotes the additive subgroup generated by $\{xy : x \in I, y \in J \}$.
Similarly, $I^n$ denotes the additive subgroup generated by $\{x_1\cdots x_n : x_1,\ldots,x_n \in I \}$.
Note that~$IJ$ and $I^n$ are again ideals.

Our main result is a concise characterization of semiprime ideals in \ca{s}.

%==========================================================================================
\begin{thmIntro}(See \autoref{prp:CharSemiprime}).
\label{MainThm}
Given an ideal $I$ in a \ca{}, the following are equivalent:
\begin{enumerate}
\item
$I$ is semiprime.
\item
$I$ is \emph{idempotent}, that is, $I^2 = I$, which means that for every $x \in I$ there exist $y_j,z_j \in I$ such that $x = y_1z_1 + \ldots + y_nz_n$.
\item
$I^m = I^n$ for some $m \neq n$.
\item
$I$ \emph{factors}, that is, for every $x \in I$ there exist $y,z \in I$ such that $x = yz$.
\item
$I$ is closed under (square or arbitrary) roots of positive elements.
\end{enumerate}
\end{thmIntro}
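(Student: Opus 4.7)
The implications $(2)\Rightarrow(3)$ (take $m=1$, $n=2$) and $(4)\Rightarrow(2)$ (every $x\in I$ already lies in $I\cdot I$) are immediate, so the real task is to close the loop through $(1)$ and $(5)$. The strategy is to exploit the Dixmier-ideal machinery announced in the abstract: since every ideal is sandwiched tightly between two Dixmier ideals from inside and outside, I plan to prove the key equivalences first in the Dixmier setting, where continuous functional calculus is unimpeded, and then transfer the conclusions to the general ideal via the squeezing.

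\textbf{Core equivalences.} For $(5)\Rightarrow(4)$, given $x\in I$ observe that $xx^*\in I\cap A_+$ since $I$ is two-sided and $x^*\in A$; applying $(5)$ gives $(xx^*)^{1/2}\in I$, and a polar-decomposition identity $x=(xx^*)^{1/2}w$ with $w\in A$ realizes $x$ as a single product in $I$. The converse $(4)\Rightarrow(2)$ is free, while $(2)\Rightarrow(5)$ is the key nontrivial step: given $a\in I\cap A_+$ and a representation $a=\sum_j y_jz_j$ granted by $(2)$, I would use the Dixmier envelope of $I$ to promote the finite sum to the pointwise statement $a^{1/2}\in I$ via functional calculus. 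For $(1)\Rightarrow(5)$, given $a\in I_+$ I would construct a suitable ideal $J$ containing $a^{1/2}$ such that some power $J^N$ lies in $I$; semiprimeness then forces $J\subseteq I$, hence $a^{1/2}\in I$. The converse $(2)\Rightarrow(1)$ is handled similarly, with the Dixmier sandwich controlling the powers of a candidate $J$ with $J^2\subseteq I$. Finally $(3)\Rightarrow(2)$ follows because $I^m=I^n$ for $m<n$ forces the descending chain $I\supseteq I^2\supseteq\dots$ to stabilize at $I^m$, so $I^m$ is itself idempotent; root closure applied to $I^m$ then propagates back to $I$ via the squeeze.

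\textbf{Main obstacle.} The hardest step is $(2)\Rightarrow(5)$: extracting the single square root $a^{1/2}\in I$ from a finite-sum decomposition $a=\sum_j y_jz_j$ requires precisely the Dixmier-ideal machinery developed earlier, since in general rings this implication is false. A secondary subtlety is $(5)\Rightarrow(4)$ for non-self-adjoint $x\in I$, because $(5)$ only speaks of positive elements while $(4)$ concerns all of $I$, and self-adjointness of $I$ is a \emph{consequence} rather than a hypothesis of the theorem; the bridge $xx^*\in I$ combined with a functional-calculus approximation staying inside $I$ is what ultimately lets one avoid this circularity.
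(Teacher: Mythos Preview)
Your overall strategy---reduce to the Dixmier interior/closure squeeze $K^{1+\varepsilon}\subseteq J\subseteq I\subseteq K\subseteq J^{1-\varepsilon}$ and exploit the power calculus there---is exactly the paper's approach. But your written argument for $(5)\Rightarrow(4)$ does not work as stated. The factorization $x=(xx^*)^{1/2}w$ with $w\in A$ exhibits $x$ as a product with only \emph{one} factor in $I$; the second factor $w$ lies merely in $A$, so all you have shown is $x\in I\cdot A=I$, which was the hypothesis. Condition~(4) demands \emph{both} factors in $I$. The fix is to iterate the root: with the polar decomposition $x=v|x|$ in $A^{**}$ and $y:=v|x|^{1/2}\in A$, one writes
\[
x \;=\; y\,(x^*x)^{1/4} \;=\; \bigl(y\,(x^*x)^{1/8}\bigr)\cdot(x^*x)^{1/8},
\]
where three applications of the square-root hypothesis give $(x^*x)^{1/8}\in I$, and then $y(x^*x)^{1/8}\in A\cdot I\subseteq I$ as well. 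Your instinct that $x^*x\in I$ (or $xx^*\in I$) holds without assuming self-adjointness of $I$ is correct and is precisely the bridge needed, but a single root is not enough to place both factors in~$I$.

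On organization: the paper closes the loop $(1)\Leftrightarrow(2)\Leftrightarrow(3)$ \emph{first}, entirely via the squeeze, and only afterward deduces $(5)$ and $(4)$ once $I$ is already known to be Dixmier. Concretely, for $(1)\Rightarrow(2)$ it applies semiprimeness to the ideal $K^{2/3}$ (whose square $K^{4/3}$ lies in $I$), forcing $I=K=K^{2/3}$; for $(3)\Rightarrow(1)$ it traps $J^m=J^{n-1/2}$ from $I^m=I^n\subseteq(J^{1-1/(2n)})^n=J^{n-1/2}\subseteq J^m$ and invokes the rigidity lemma $J^s=J^t\Rightarrow J=J^r$ for all $r$. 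Your alternative route---prove $(2)\Rightarrow(5)$ directly, then for $(3)\Rightarrow(2)$ argue that $I^m$ is idempotent, hence root-closed, hence Dixmier, and ``propagate back through the squeeze''---can be made to work, but the propagation step is not spelled out and ends up needing the same Dixmier-power identities (e.g.\ deducing $K^{2m}=K^{m(1+\varepsilon)}$ from idempotence of $I^m$). The paper's ordering is cleaner because it avoids ever proving $(2)\Rightarrow(5)$ for a general (non-Dixmier) ideal.
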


%==========================================================================================
The equivalence between semiprimeness and idempotency in this context is somewhat surprising since it is easy to produce examples of ideals in rings that are semiprime but not idempotent, or that are idempotent but not semiprime.
It is also notable that (5) is a condition on the positive cone of a \ca{}, which essentially relies on its $\ast$-structure, while the other conditions only refer to the ring structure of the \ca{}.
This suggests that the proof of \autoref{MainThm} is quite intricate, and indeed both the implication `(1)$\Rightarrow$(2)' and its converse ultimately rely on a deep analysis of the shadows of polar decomposition available in \ca{s}.

\medskip

%==========================================================================================
It should be pointed out that \emph{norm-closed} ideals in \ca{s} are well-known to be semiprime (and self-adjoint).
In fact, much more is true:
Every closed ideal in a \ca{} is an intersection of primitive ideals (that is, kernels of irreducible representations on Hilbert spaces), and primitive ideals are closed and prime.
This leads to a natural identification of closed ideals in a \ca{} with closed subsets of its primitive ideal space;
see \cite[Section~II.6.5]{Bla06OpAlgs}.

\emph{Norm-closed} prime ideals in \ca{s} are also well-understood. 
Indeed, for separable \ca{s} they coincide precisely with the primitive ideals - although Weaver showed that this is no longer true for non-separable \ca{s} \cite{Wea03PrimeNotPrim}.
Further, a proper, closed ideal $I$ in a \ca{} $A$ is prime if and only if for all \emph{closed} ideals $J,K \subseteq A$ with $JK \subseteq I$ we have $J \subseteq I$ or $K \subseteq I$;
see, for example, \cite[after Definition~II.5.4.4]{Bla06OpAlgs}.

The theory of not necessarily norm-closed ideals in \ca{s} also has a long history, but it is much less developed and many aspects remain mysterious. 
It is in this area that our results shed new light.
First, it should be noted that many \ca{s} contain a wealth of non-closed prime or semiprime ideals.
Indeed, this is already the case for commutative \ca{s};
see \cite[Chapter~14]{GilJer60RgsCtsFcts}.
In the noncommutative setting, an investigation of non-closed ideals was launched by Pedersen \cite{Ped66MsrThyCAlg1} in the 1960s.
Among other things, he showed that every \ca{} contains a minimal dense ideal, nowadays called the \emph{Pedersen ideal} of the \ca{}, and that this ideal is closed under square roots of positive elements.
It follows from our results that the Pedersen ideal is always semiprime;
see \autoref{exa:PedersenIdeal}.

Non-closed ideals also arise naturally in the study of tracial weights and singular traces.
Given a tracial weight on a \ca{}, the linear span of positive elements with finite weight is an ideal that need not be norm-closed, and this was one of the motivations for Pedersen's investigations \cite{Ped66MsrThyCAlg1, Ped69MsrThyCAlg34}.
Tracial weights on the algebra $\mathcal{B}(\ell^2(\NN))$ of bounded operators on the Hilbert space $\ell^2(\NN)$ that vanish on finite rank operators are called singular traces, and these play a crucial role in noncommutative geometry \cite{LorSukZan13SingTrace}.
The wealth of singular traces is closely connected to the plethora of non-closed ideals in $\mathcal{B}(\ell^2(\NN))$, including for example the Schatten $p$-ideals.
A description of the numerous non-closed prime and semiprime ideals in $\mathcal{B}(\ell^2(\NN))$ was given by Morris--Salinas \cite{MorSal73SemiprimeIdlsBH}.
This should be contrasted with the fact that $\mathcal{B}(\ell^2(\NN))$ contains only three closed ideals: the zero ideal, the compact operators, and $\mathcal{B}(\ell^2(\NN))$.

\medskip

%==========================================================================================
\autoref{MainThm} has several interesting consequences. 
We list some of them here:

It is well-known that norm-closed ideals in \ca{s} are self-adjoint.
We show that the same holds for arbitrary semiprime ideals, while it fails for general ideals.
However, we do not know an easy proof for this simply stated result due to the difficulty in extracting $*$-algebraic information from purely ring theoretic assumption on ideals.
In fact, self-adjointness will be a consequence of a much stronger result, for which we recall some terminology.
We say that an ideal $I$ in a \ca{} $A$ is \emph{positively spanned} if it is the linear span of its positive part $I_+ := I \cap A_+$, and we say that $I$ is \emph{hereditary} if whenever $a\in A$ and $x\in I$ satisfy $0\leq a\leq x$, then $a\in I$.

%==========================================================================================
\begin{corIntro} (See \autoref{prp:SemiprimeDixmier}).
\label{corB}
Semiprime ideals in \ca{s} are positively spanned (hence self-adjoint and closed under scalar multiplication) and hereditary.
\end{corIntro}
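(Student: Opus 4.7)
The plan is to leverage the characterizations in \autoref{MainThm}, particularly the factoring property and the closure under square roots of positive elements. I would proceed in three steps: first establish closure under scalar multiplication and self-adjointness via a polar-type factorization inside $A$; then deduce positive spanning; and finally address hereditariness.

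First, let $x \in I$. The factoring property yields $x = yz$ with $y, z \in I$, so $x^*x = z^*(y^*y)z$ lies in $I$ by the two-sided ideal property. Root closure gives $|x| = (x^*x)^{1/2} \in I$ and hence $|x|^{1/2} \in I$. The crucial observation is that $x$ factors through $|x|^{1/2}$ inside $A$ itself: the sequence $w_n := x(|x| + 1/n)^{-1/2}$ lies in $A$ and converges in norm to an element $w \in A$ satisfying $x = w \cdot |x|^{1/2}$; this reduces to the uniform convergence of $t(t+1/n)^{-1/2} \to t^{1/2}$ on compact subsets of $[0, \infty)$. Consequently, for every $\lambda \in \CC$ we obtain $\lambda x = (\lambda w) \cdot |x|^{1/2} \in A \cdot I \subseteq I$ and $x^* = |x|^{1/2} \cdot w^* \in I \cdot A \subseteq I$, establishing closure under scalar multiplication and self-adjointness.

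Positive spanning follows cleanly from the previous step: for any $x \in I$, both $\frac{1}{2}(x + x^*)$ and $\frac{1}{2i}(x - x^*)$ belong to $I$, so it suffices to express each self-adjoint $a \in I$ as a difference of positive elements of $I$. Since $a^2 \in I$, root closure yields $|a| = (a^2)^{1/2} \in I$, and therefore $a_\pm := \frac{1}{2}(|a| \pm a)$ are positive elements of $I$ whose difference is $a$.

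The hereditary property is where I anticipate the main obstacle. Given $0 \leq a \leq b$ with $b \in I$, a Kirchberg--R\o rdam-type lemma produces, for each $\varepsilon > 0$, an element $c_\varepsilon \in A$ with $(a - \varepsilon)_+ = c_\varepsilon b c_\varepsilon^* \in I$. The difficulty is upgrading these approximations to the genuine membership $a \in I$, since $I$ need not be norm-closed. I would attempt this via the Dixmier ideal framework announced in the introduction: \autoref{MainThm} should imply that $I$ is itself a Dixmier ideal, so that hereditariness becomes built into the defining axioms. A more hands-on alternative would be to show directly that $a^\alpha \in AbA$ algebraically for some sufficiently large $\alpha > 0$ via a Pedersen-type trick; this would give $a^\alpha \in I$, whence $a = (a^\alpha)^{1/\alpha} \in I$ by root closure.
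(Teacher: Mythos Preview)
Your first two steps are correct. One redundancy: you do not need the factoring characterization to obtain $x^*x \in I$, since $x^*x = x^* \cdot x \in AI \subseteq I$ by the ideal property alone. The factorization $x = w|x|^{1/2}$ with $w \in A$ is precisely \autoref{prp:PolarRiesz}(1) with $\varepsilon = \tfrac{1}{2}$, and the remainder of your steps~1--2 goes through. Your hereditary step is left open, but your path~(b) succeeds and is easier than you suggest---indeed $\alpha = 1$ already works. Given $0 \leq a \leq b$ with $b \in I_+$, apply \autoref{prp:PolarRiesz}(2) to $x = a^{1/2}$ (so $x^*x = a \leq b$) with $\varepsilon = \tfrac{1}{4}$ to obtain $z \in A$ with $a^{1/2} = z\, b^{1/4}$; then $a = b^{1/4} z^* z\, b^{1/4} \in I$, since $b^{1/4} \in I$ by two applications of root closure. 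No limiting argument is needed, so the Kirchberg--R{\o}rdam cut-down and its attendant norm-closure difficulty can be avoided entirely.

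The paper's route is different: rather than verifying the three properties individually, the proof of \autoref{prp:CharSemiprime} shows in one stroke that a semiprime ideal coincides with its Dixmier closure $\exterior(I)$ (using $\exterior(I)^{4/3} \subseteq I$ from \autoref{prp:IntExt} together with semiprimeness to force $\exterior(I)^{2/3} \subseteq I$, hence $I = \exterior(I)$). Positive spanning, hereditariness, and strong invariance of~$I_+$ then all follow at once from the definition of a Dixmier ideal. That argument rests on the machinery of \autoref{sec:DixmierIdeals} and \autoref{sec:Lattice}; yours uses only polar decomposition and the root-closure characterization from \autoref{MainThm}, which is more elementary for this corollary but does not immediately yield the strong invariance of~$I_+$.
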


%==========================================================================================
Note that \autoref{corB} applies to prime ideals and in particular maximal ideals; 
see \autoref{prp:MaximalIdeal}. 

It follows from Cohen's factorization theorem that norm-closed ideals in a \ca{} factor, and in particular are idempotent.
While this is no longer true for arbitrary ideals, we obtain the following interesting dichotomy:

%==========================================================================================
\begin{corIntro} (See \autoref{prp:Dichotomy}).
For an ideal $I$ in a \ca{}, we have either 
\[
I = I^2 = I^3 = \ldots \ 
\]
or 
\[
I \supsetneqq I^2 \supsetneqq I^3 \supsetneqq \ldots. 
\]
\end{corIntro}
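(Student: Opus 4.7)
The plan is to derive this dichotomy as an essentially immediate consequence of \autoref{MainThm}, specifically the equivalence between conditions (2) and (3). The inclusion $I^{n+1}\subseteq I^n$ always holds, so the chain $I\supseteq I^2\supseteq I^3\supseteq\cdots$ is automatically weakly decreasing, and the only question is whether some two consecutive terms coincide.

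First I would handle the easy alternative: if $I=I^2$, then a direct induction on $n$ gives $I^n = I\cdot I^{n-1} = I\cdot I = I$ for every $n\geq 1$, which is exactly the first case of the dichotomy.

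For the second alternative, I would argue by contraposition. Suppose the chain is not strictly decreasing, so that $I^n = I^{n+1}$ for some $n\geq 1$. Then condition (3) of \autoref{MainThm} is satisfied (apply it with the two distinct exponents $n$ and $n+1$), so by the implication (3)$\Rightarrow$(2) we conclude $I=I^2$, and the previous paragraph places us in the first case. Equivalently: whenever $I\neq I^2$, we have $I^n\neq I^{n+1}$ for every $n\geq 1$, and combined with the automatic inclusion $I^{n+1}\subseteq I^n$ this yields the strictly decreasing chain.

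The main obstacle is not in the present deduction, which is little more than a repackaging of \autoref{MainThm}, but in the theorem being invoked: the nontrivial content is the assertion that a single equality $I^m=I^n$ anywhere in the chain forces full idempotency $I^2=I$. Once that implication is granted, the dichotomy is automatic, so no further work on powers of ideals is required beyond what \autoref{MainThm} already supplies.
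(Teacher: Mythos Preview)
Your proposal is correct and matches the paper's approach: the paper simply states that the dichotomy follows directly from \autoref{prp:CharSemiprime} (the detailed version of \autoref{MainThm}), and your argument spells out exactly how, via the equivalence of (2) and (3).
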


%==========================================================================================
Recall that a module $E$ over a \ca\ $A$ is said to be \emph{unital}
if $EA=E$.
Another consequence of \autoref{MainThm} is that trace ideals of projective, unital modules over \ca{s} are semiprime;
see \autoref{prp:TraceIdeals}.
This is of interest for studying the `ring-theoretic Cuntz semigroup' of a \ca{} \cite{AntAraBosPerVil23arX:CuRg, AntAraBosPerVil24arX:IdlsCuRg}.

While \autoref{MainThm} characterizes when an ideal in a \ca{} is semiprime, our methods to prove the theorem allow us to relate an arbitrary ideal with its \emph{semiprime closure}, the smallest semiprime ideal containing it.
Further terminology will be explained before \autoref{prp:SemiprimeClosure}.

%==========================================================================================
\begin{corIntro} (See \autoref{prp:SemiprimeClosure} and \autoref{prp:NilRadicals}).
The semiprime closure of an ideal $I$ in a \ca{} $A$ is the union of some increasing sequence of ideals that are radical over $I$.
Further, every one-sided nil ideal in $A/I$ is contained in the prime radical of $A/I$. 
\end{corIntro}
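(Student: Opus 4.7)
My plan is to prove the first statement by an explicit iterated construction of the semiprime closure, and then use this together with the structural results of the paper to handle the second. Set $I_0 := I$ and, inductively, let $I_{n+1}$ be the (algebraic, two-sided) ideal of $A$ generated by $I_n$ together with all positive elements $a \in A_+$ satisfying $a^2 \in I_n$. Writing $I_\infty := \bigcup_{n \geq 0} I_n$, the goal is to show: (a) each $I_n$ is radical over $I$, (b) $I_\infty$ is semiprime, and (c) $I_\infty$ is contained in every semiprime ideal containing $I$, so that $I_\infty$ is precisely the semiprime closure $\operatorname{sp}(I)$.

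Item (b) is immediate from \autoref{MainThm}: by construction, $I_\infty$ is closed under square roots of positive elements. Item (c) is a straightforward induction on $n$: any semiprime $J \supseteq I$ is closed under square roots of positives (again by \autoref{MainThm}), so $I_n \subseteq J$ for all $n$. Item (a) is the main content of the first part. It goes by induction on $n$: for $a \in A_+$ with $a^2 \in I_n$ and the inductive hypothesis that $(a^2)^k \in I$ for some $k$, one has $a^{2k} \in I$, and hence $a$ is radical over $I$; extending this from generators to the whole ideal $I_{n+1}$ requires the Dixmier-ideal machinery from earlier in the paper, since it provides the algebraic control needed to follow element-wise radicality through the ideal-generation process.

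For the second statement, let $L \supseteq I$ be a left ideal of $A$ with $L/I$ nil (the right-ideal case is symmetric). For $x \in L$, left-ideal closure gives $x^*x \in L$, hence $(x^*x)^m \in I = I_0$ for some $m$, i.e.\ $|x|^{2m} \in I_0$. Iterating the square-root step through the chain $(I_n)$, we get $|x|^{2m/2^k} \in I_k$ for each $k$; once $2m/2^k \leq 1$, the operator inequality $|x| \leq \|x\|^{1 - 2m/2^k}\,|x|^{2m/2^k}$ combined with heredity of $\operatorname{sp}(I)$ (\autoref{corB}) yields $|x| \in \operatorname{sp}(I)$, and therefore $x^*x = |x|^2 \in \operatorname{sp}(I)$. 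The identity $(xx^*)^2 = x(x^*x)x^* \in A \cdot \operatorname{sp}(I) \cdot A \subseteq \operatorname{sp}(I)$ together with closure under square roots places $xx^*$ in $\operatorname{sp}(I)$ as well.

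The main obstacle is the final step, promoting $x^*x, xx^* \in \operatorname{sp}(I)$ to $x \in \operatorname{sp}(I)$. My plan is to establish $xAx \subseteq \operatorname{sp}(I)$ and then apply semiprimeness to the two-sided ideal $\langle x \rangle$, which then squares into $\operatorname{sp}(I)$ and is therefore contained in it, forcing $x \in \operatorname{sp}(I)$. For $a \in A$ one has $(xax)^*(xax) = x^*a^*(x^*x)ax \in A \cdot \operatorname{sp}(I) \cdot A \subseteq \operatorname{sp}(I)$, hence $|xax| \in \operatorname{sp}(I)$ by closure under square roots of positives. The delicate point is converting $|xax| \in \operatorname{sp}(I)$ into $xax \in \operatorname{sp}(I)$: this amounts to an algebraic polar-decomposition argument inside the not-necessarily-closed ideal $\operatorname{sp}(I)$, and it is here that all the structural features at our disposal---self-adjointness, positive spanning, heredity, and closure under roots (\autoref{MainThm} and \autoref{corB})---must be combined within the Dixmier-ideal framework to finish.
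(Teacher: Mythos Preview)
Your argument has two genuine gaps, and fixing them essentially reproduces the paper's approach.

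\textbf{Part 1, claim (a).} You assert that each $I_n$ is radical over $I$, but the inductive step is not carried out: knowing that each \emph{generator} $a$ of $I_{n+1}$ satisfies $a^{2k} \in I$ for some $k$ does not by itself imply that an arbitrary element $\sum_j x_j a_j y_j$ of the ideal they generate has a power in $I$. The way to actually prove (a) is to show inductively that $I_n \subseteq K^{1/2^n}$, where $K = \exterior(I)$: if $I_n \subseteq K^{1/2^n}$, then every $a \in A_+$ with $a^2 \in I_n$ lies in $(K^{1/2^n})^{1/2}_+ = K^{1/2^{n+1}}_+$, and since $K^{1/2^{n+1}}$ is already an ideal containing $I_n$, we get $I_{n+1} \subseteq K^{1/2^{n+1}}$. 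Then $(I_n)^{2^n+1} \subseteq K^{1+2^{-n}} \subseteq I$ by \autoref{prp:IntExt}, which gives radicality. But once you have this, your sequence $(I_n)$ is sandwiched inside the paper's sequence, which does the job directly and without any auxiliary construction: with $J = \interior(I)$ one has $(J^{1/n})^n = J \subseteq I$ immediately, and $\bigcup_n J^{1/n} = \sqrt{I}$ by the short argument in \autoref{prp:SemiprimeClosure}. Your $(I_n)$ is redundant.

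\textbf{Part 2, the final step.} Your detour through $xAx$ is circular. You want to prove ``$x^*x \in \sqrt{I} \Rightarrow x \in \sqrt{I}$'', and you reduce it to ``$|xax| \in \sqrt{I} \Rightarrow xax \in \sqrt{I}$'' for all $a$; but the latter is just another instance of the former (with $y = xax$), so nothing has been gained and the ``delicate point'' you flag is exactly the original problem. The missing ingredient is \autoref{prp:CharRootDixmierIdeal}: for any Dixmier ideal $L$ one has $L^{1/2} = \{y \in A : y^*y \in L\}$. Since $\sqrt{I}$ is semiprime, it is Dixmier with $\sqrt{I} = (\sqrt{I})^{1/2}$ by \autoref{prp:CharSemiprime} and \autoref{prp:CharSemiprimeDixmier}, so $x^*x \in \sqrt{I}$ gives $x \in \sqrt{I}$ in one step. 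This is exactly how the paper proceeds: the characterization $\sqrt{I} = \{x \in A : (x^*x)^n \in I_+ \text{ for some } n \geq 1\}$ in \autoref{prp:SemiprimeClosure} encodes precisely this, and makes the proof of \autoref{prp:NilRadicals} a one-liner from $x^*x \in J$ and $(x^*x)^n \in I$. Your iteration of square roots through the $I_k$, the heredity argument, and the $xAx$ maneuver are all unnecessary once this is in hand.
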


%==========================================================================================
In particular, \autoref{prp:NilRadicals} verifies K\"{o}the's conjecture for rings that arise as the quotient of a \ca{} by a (not necessarily norm-closed) ideal;
see \autoref{rmk:KoetheConjecture}.

\medskip

%==========================================================================================
%We mention another consequence of \autoref{MainThm} and the techniques we developed for its proof. 
The ideals of a \ca{} $A$ form a complete lattice with respect to inclusion, and the infimum and supremum of a family of ideals is given by their intersection and sum, respectively. 
Norm-closed ideals of $A$ form a sublattice, but it is typically not a complete sublattice, because infinite sums of norm-closed ideals need not be norm-closed. 
While many consequences of \autoref{MainThm} suggest similarities for semiprime ideals and norm-closed ones, the family of semiprime ideals behaves better at this point: 

%==========================================================================================
\begin{thmIntro} (See \autoref{prp:LatticeSemiprimeIdeals}).
The semiprime ideals of a \ca{} form a complete sublattice of the lattice of all ideals. 
\end{thmIntro}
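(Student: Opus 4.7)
The complete lattice of ideals of a \ca{} $A$ has meets given by intersection and joins given by sum, so to show that semiprime ideals form a complete sublattice it suffices to verify that arbitrary intersections and arbitrary sums of semiprime ideals are again semiprime. The intersection case is immediate from the definition: if $\{I_\lambda\}$ is a family of semiprime ideals and $J$ is any ideal with $J^2 \subseteq \bigcap_\lambda I_\lambda$, then $J^2 \subseteq I_\lambda$ for each $\lambda$, so $J \subseteq I_\lambda$ by semiprimeness and hence $J \subseteq \bigcap_\lambda I_\lambda$. This step uses only ring-theoretic facts, and could alternatively be deduced from the description of semiprime ideals as intersections of prime ideals.

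The real content lies in the sum case, where I would use the equivalence ``semiprime $\iff$ idempotent'' furnished by \autoref{MainThm}. Given a family $\{I_\lambda\}$ of semiprime ideals of $A$, set $J = \sum_\lambda I_\lambda$, which is again an ideal. For each index $\lambda$, the idempotency of $I_\lambda$ together with the inclusion $I_\lambda \subseteq J$ gives $I_\lambda = I_\lambda^2 \subseteq J \cdot J = J^2$; summing over $\lambda$ yields $J \subseteq J^2$, while the reverse inclusion $J^2 \subseteq J$ holds trivially since $J$ is an ideal. Hence $J$ is idempotent, and \autoref{MainThm} then implies that $J$ is semiprime.

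The essential difficulty has already been overcome in \autoref{MainThm}; once that characterization is available, completeness of the semiprime sublattice follows almost formally. What elevates the statement beyond pure lattice theory is precisely the implication ``idempotent $\Rightarrow$ semiprime,'' which fails in general rings and reflects the special \ca{}-theoretic nature of the problem. In particular, no separate analysis of the sum is required beyond the one-line manipulation above.
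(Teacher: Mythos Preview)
Your argument is correct and rests on the same decisive ingredient as the paper's, namely the equivalence ``semiprime $\iff$ idempotent'' from \autoref{MainThm} (stated in the body as \autoref{prp:CharSemiprime}). The intersection step is handled identically. For the sum, the paper takes a slightly heavier route: it first notes that each $I_\lambda$ is Dixmier, then invokes \autoref{prp:LatticeDixmierIdeals} to obtain the equality $\big(\sum_\lambda I_\lambda\big)^2 = \sum_\lambda I_\lambda^2$, and finally uses idempotency of each $I_\lambda$ to conclude. Your argument bypasses the Dixmier-lattice machinery entirely: from $I_\lambda = I_\lambda^2 \subseteq J^2$ you get $J \subseteq J^2$ directly, needing only the trivial inclusion $I_\lambda^2 \subseteq J^2$ rather than the nontrivial equality $\big(\sum_\lambda I_\lambda\big)^2 = \sum_\lambda I_\lambda^2$. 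This is a genuine, if modest, simplification; the paper's route has the side benefit of exhibiting the sum as a Dixmier ideal along the way, but for the bare statement of the theorem your approach is cleaner.
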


\medskip

%==========================================================================================
To obtain the main theorem, we consider a particular class of ideals in \ca{s} with a well-behaved notion of powers and roots. 
We shall call this class Dixmier ideals. 
More precisely, by a \emph{Dixmier ideal} in a \ca{}~$A$ we mean a positively spanned, hereditary ideal $I$ such that its positive part $I_+$ is a \emph{strongly invariant}, that is, $x^*x \in I_+$ if and only if $xx^*\in I_+$ for all $x \in A$;
see \autoref{dfn:DixmierIdeal}.
We will justify our terminology right after \autoref{dfn:DixmierIdeal}.

Given a Dixmier ideal $I$ and $s\in(0,\infty)$, we show that there exists a unique Dixmier ideal $I^s$ whose positive part is $\{a^s:a\in I_+\}$;
see \autoref{dfn:PowerDixmierIdeal}. It is immediate that $I^1=I$, 
and we show in \autoref{prp:PowersDixmierIdeal} that $I^{s+t} = I^sI^t$ and $(I^s)^t = I^{st}$ for all $s,t \in (0,\infty)$. 
In particular, for $s=2$ our definition agrees with the usual algebraic definition of $I^2$ as the additive subgroup generated by $\{xy : x,y \in I\}$.
Moreover, the ideal $I^{\frac{1}{n}}$ is the canonical $n$-th root of a Dixmier ideal $I$, since it is the unique Dixmier ideal $J$ satisfying $J^n=I$; 
see \autoref{prp:UniqueDixmierRoot}.

While not every ideal in a \ca{} is Dixmier, every ideal is closely embraced by Dixmier ideals.
More precisely, given any ideal $I$, there exist Dixmier ideals $J$ and $K$ such that $J \subseteq I \subseteq K$, and such that for arbitrarily small $\varepsilon\in(0,1)$ we have $K^{1+\varepsilon} \subseteq J$ and $K \subseteq J^{1-\varepsilon}$;
see \autoref{prp:IntExt}.

\medskip

%==========================================================================================
The plan of this paper is as follows.
In \autoref{sec:RelationPositive} we lay the technical foundation by investigating several comparison relations for positive elements in a \ca{}.
Next, we introduce the notion of Dixmier ideals and develop the theory of their powers and roots in \autoref{sec:DixmierIdeals}. 
We also consider the complete lattice of Dixmier ideals and show that every ideal is tightly embraced by Dixmier ideals in \autoref{sec:Lattice}. 
In \autoref{sec:SemiprimeIdeals}, we finally focus on semiprime ideals and exploit the theory of Dixmier ideals to obtain \autoref{MainThm} and other results. 

%==========================================================================================
\subsection*{Conventions}

%==========================================================================================
%We note general notation we use throughout the paper. 
For a not necessarily unital \ca\ $A$, we let~$A_+$ denote the set of positive elements in $A$.
For $x \in A$, we put $|x|:=(x^*x)^{\frac{1}{2}}$. 

We let $\NN$ denote the set of natural numbers including $0$.

%==========================================================================================
\subsection*{Acknowledgements}

%==========================================================================================
Part of this work was completed while the first and third named authors were attending the Thematic Program on Operator Algebras and Applications at the Fields Institute for Research in Mathematical Sciences in August and September 2023, and they gratefully acknowledge the support and hospitality of the Fields Institute.

The authors thank Jan {\v{Z}}emli{\v{c}}ka for valuable comments on the radical theory of rings.
The second named author thanks Yasuyuki Kawahigashi, who was his advisor, for support and encouragement throughout his PhD course.
The third named author thanks George Elliott, Francesc Perera, Chris Schafhauser, Eduard Vilalta, and Stuart White for interesting discussions on ideals in \ca{s}.

The authors thank the anonymous referees for their valuable feedback on the submitted version of this paper.

%==========================================================================================
%==========================================================================================
\section{Comparison relations for positive elements}
\label{sec:RelationPositive}

%==========================================================================================
In this section, we introduce several binary relations for positive elements in a \ca{}; 
see \autoref{dfn:Relations}. 
We then lay the technical foundation to the subsequent sections by investigating how these relations, especially the relation $\lhd$, behave under sums and powers; 
see \autoref{prp:PropertiesRelations} and \autoref{prp:PowerLhd}. 
The proofs rely on the versions of polar decomposition and Riesz properties that are available in \ca{s}; 
see \autoref{prp:PolarRiesz}.

\medskip

%==========================================================================================
If $M$ is a von Neumann algebra, then every $x \in M$ admits a polar decomposition $x = v|x|$ with a partial isometry $v \in M$, and such that $v^*x=|x|$. 
In particular, $v^*xv^*=|x|v^*=x^*$.
More generally, given $a \in M_+$ with $x^*x \leq a$, there exists a contractive element $w \in M$ such that $x = wa^{\frac{1}{2}}$;
see, for example, \cite[Sections~I.5.2, I.9.2]{Bla06OpAlgs}.
Using that the bidual of a \ca{} is a von Neumann algebra, we obtain shadows of these results for elements in a \ca, which both are often referred to as `polar decomposition' in \ca{s}.
We will also often use Pedersen's version of Riesz Decomposition and Riesz Refinement for positive elements in \ca{s}.
We recall the precise statements for the convenience of the reader.

%==========================================================================================
\begin{prp}
\label{prp:PolarRiesz}
Let $A$ be a \ca{}.
\begin{enumerate}
\item
(Polar Decomposition I)
Let $x \in A$.
Then there exists a partial isometry $v \in A^{**}$ such that $x = v|x|$ in $A^{**}$ and $vy \in A$ for every $y \in \overline{|x|A}$. 
In particular, for $\varepsilon \in (0,1)$ and $z := v|x|^\varepsilon \in A \subseteq A^{**}$, we have
\[
x = z|x|^{1-\varepsilon}. % \big( v|x|^\varepsilon \big) |x|^{1-\varepsilon} \text{ with } v|x|^\varepsilon \in A.
\]

\item
(Polar decomposition II)
Let $x \in A$ and $a \in A_+$ satisfying $x^*x \leq a$.
Then there exists a contraction $w \in A^{**}$ such that $x = wa^{\frac{1}{2}}$ and we have $wy \in A$ for every $y \in \overline{a^{\frac{1}{2}}A} = \overline{aA}$. 
In particular, for $\varepsilon \in (0,\frac{1}{2})$ and $z := wa^\varepsilon$, we have
\[
x = z a^{\frac{1}{2}-\varepsilon}. %\big( wa^\varepsilon \big) a^{\frac{1}{2}-\varepsilon} \text{ with } wa^\varepsilon \in A.
\]

\item
(Riesz Refinement)
Let $x_j,y_k \in A$ % $x_1,\ldots,x_m, y_1,\ldots,y_n \in A$ 
satisfy $\sum_{j=1}^m x_j x_j^* = \sum_{k=1}^n y_k^* y_k$.
Then there exist $z_{jk} \in A$ for $j=1,\ldots,m$ and $k=1,\ldots,n$ such that
\[
x_j^* x_j = \sum_{k=1}^n z_{jk}^*z_{jk} \text{ for each $j$}, \andSep 
\sum_{j=1}^m z_{jk}z_{jk}^* = y_k y_k^* \text{ for each $k$}.
\]
\item
(Riesz Decomposition)
Let $x_j,y_k \in A$ % $x_1,\ldots,x_m, y_1,\ldots,y_n \in A$ 
satisfy $\sum_{j=1}^m x_j x_j^* \leq \sum_{k=1}^n y_k^* y_k$.
Then there exist $z_{jk} \in A$ for $j=1,\ldots,m$ and $k=1,\ldots,n$ such that
\[
x_j^* x_j = \sum_{k=1}^n z_{jk}^*z_{jk} \text{ for each $j$}, \andSep
\sum_{j=1}^m z_{jk}z_{jk}^* \leq y_k y_k^* \text{ for each $k$}.
\]
\end{enumerate}
\end{prp}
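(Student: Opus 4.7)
The plan is to reduce parts (1) and (2) to the corresponding von Neumann algebraic facts in the bidual $A^{**}$, and to cite Pedersen's work for the Riesz-type statements (3) and (4). In all four parts, the content is not the existence of the witnesses (which is immediate in $A^{**}$) but the assertion that certain products stay within $A$.

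For (1), I would take the polar decomposition $x = v|x|$ in the von Neumann algebra $A^{**}$, where $v$ is a partial isometry with $v^*v$ the support projection of $|x|$. For every $a \in A$, one has $v(|x|a) = xa \in A$, so $vy \in A$ for all $y$ in the dense subspace $|x|A$ of $\overline{|x|A}$. Since $v$ is contractive and $A$ is norm-closed in $A^{**}$, the inclusion $vy \in A$ extends to the whole norm closure. For the specific corollary, the function $t \mapsto t^\varepsilon$ vanishes at $0$ on $\sigma(|x|)$, hence is a uniform limit on $\sigma(|x|)$ of polynomials without constant term; this places $|x|^\varepsilon$ in $\overline{|x| \cdot C^*(|x|)} \subseteq \overline{|x|A}$, so $z := v|x|^\varepsilon \in A$, and functional calculus gives $x = v|x|^\varepsilon \cdot |x|^{1-\varepsilon} = z|x|^{1-\varepsilon}$.

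For (2), applying the adjoint form of Douglas' majorization lemma in the von Neumann algebra $A^{**}$ to the inequality $x^*x \leq a$ yields a contraction $w \in A^{**}$ with $x = wa^{\frac{1}{2}}$. To prove $wy \in A$ for $y \in \overline{aA}$, it suffices to check elements of the form $y = ac$ with $c \in A$, for which
\[
w \cdot ac = wa^{\frac{1}{2}} \cdot a^{\frac{1}{2}} c = x \cdot a^{\frac{1}{2}} c \in A;
\]
passing to the norm closure handles the general case. The identity $\overline{a^{\frac{1}{2}}A} = \overline{aA}$ is a standard consequence of functional calculus, reflecting that both closed right ideals correspond to the same support projection of $a$ in $A^{**}$. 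The specific assertion $x = z a^{\frac{1}{2}-\varepsilon}$ then follows by the trick of part (1): one sets $z := wa^\varepsilon$, which lies in $A$ because $a^\varepsilon \in \overline{aA}$ by the same vanishing-at-zero argument.

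For parts (3) and (4), the Riesz refinement and Riesz decomposition properties for positive elements in \ca{s} are due to Pedersen, and I would cite his foundational work. Their proofs proceed by combining the polar decomposition refined in part (1) with an inductive matrix-algebraic argument (diagonalizing a positive matrix whose entries come from the given relations) in order to arrange that the auxiliary elements $z_{jk}$ can be chosen inside $A$ rather than merely in $A^{**}$. The main obstacle throughout is exactly this $A$-versus-$A^{**}$ issue: the natural witnesses live a priori only in the bidual, and the real content is to verify that, when paired with elements of the appropriate dense subspace, they produce elements of $A$, and that this property survives taking norm closures.
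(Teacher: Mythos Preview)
Your proposal is correct and follows essentially the same approach as the paper. The paper's proof is extremely terse: for (1) it writes the one-line chain $vy \in v\overline{|x|A} \subseteq \overline{v|x|A} = \overline{xA} \subseteq A$, says (2) is similar with references to Blackadar, and for (3) and (4) simply cites Pedersen. Your argument unpacks exactly these steps (dense subspace first, then pass to the closure using contractivity of $v$ or $w$ and closedness of $A$), and your justification that $|x|^\varepsilon$ and $a^\varepsilon$ lie in the appropriate closed right ideals via approximation by polynomials without constant term fills in what the paper leaves implicit.
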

\begin{proof}
For~(1), consider the polar decomposition $x = v|x|$ of $x$ in $A^{**}$.
Given $y \in \overline{|x|A}$, we have
\[
vy
\in v\overline{|x|A}
\subseteq \overline{v|x|A}
= \overline{xA}
\subseteq A.
\]
Statement~(2) is shown similarly, using \cite[Sections~I.5.2, I.9.2]{Bla06OpAlgs}.
See also \cite[Proposition~II.3.2.1]{Bla06OpAlgs} and \cite[Proposition~1.4.5]{Ped79CAlgsAutGp}.

For~(3) and~(4), see \cite[Proposition~1.1]{Ped69MsrThyCAlg34}. 
\end{proof}

%==========================================================================================
\begin{dfn}
\label{dfn:Relations}
Let $A$ be a \ca.
We define binary relations $\approx$, $\lessapprox$, $\lessapprox_+$ and~$\lhd$ on $A_+$ as follows. Given $a,b \in A_+$:
\begin{itemize}
\item
We set $a \approx b$ if there exists $x \in A$ such that $a = x^*x$ and $b = xx^*$.
\item
We set $a \lessapprox b$ if there exists $a' \in A_+$ such that $a \approx a' \leq b$.
\item
We set $a \lessapprox_+ b$ if there exist $n$ and $a_1,\ldots,a_n, b_1,\ldots,b_n \in A_+$ such that
\[
a = a_1 + \ldots + a_n, \quad
a_1 \approx b_1, \ \ldots \ ,
a_n \approx b_n, \andSep
b_1 + \ldots + b_n \leq b.
\]
\item
We set $a \lhd b$ if there exist $m \in \NN$ such that $a \lessapprox_+ mb$.
\end{itemize}
\end{dfn}

%==========================================================================================
The main technical result of this section is that positive elements $a$ and $b$ ia a \ca{} satisfy $a \lhd b$ if and only if $a^s \lhd b^s$ for some (equivalently, every) $s \in (0,\infty)$;
see \autoref{prp:PowerLhd}.
We build up to this result by proving that the relations $\approx$, $\lessapprox$ and $\lessapprox_+$ have a similar invariance when passing to powers.
First, we establish some basic properties of the relations from \autoref{dfn:Relations}.

%==========================================================================================
\begin{lma}
\label{prp:SwitchLeqApprox}
Let $A$ be a \ca, and let $a,b,c \in A_+$ satisfy $a \leq b \approx c$.
Then there exists $d \in A_+$ such that $a \approx d \leq c$.
\end{lma}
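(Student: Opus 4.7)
The plan is to combine both forms of polar decomposition from \autoref{prp:PolarRiesz} to build a single element $z \in A$ realizing the desired equivalence. Write $b = x^*x$ and $c = xx^*$ for some $x \in A$, which is possible since $b \approx c$. The candidate I will construct is a ``twisted'' version of $x$, obtained by multiplying $x$ on the right by a contraction from $A^{**}$ that encodes how $a^{\frac{1}{2}}$ sits underneath $|x|$.

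First, since $a \leq x^*x = |x|^2$, I apply \autoref{prp:PolarRiesz}(2) to the element $a^{\frac{1}{2}}$, noting $(a^{\frac{1}{2}})^* a^{\frac{1}{2}} = a \leq |x|^2$, to obtain a contraction $w \in A^{**}$ with $a^{\frac{1}{2}} = w|x|$; taking adjoints also gives $|x|w^* = a^{\frac{1}{2}}$. Simultaneously, I apply \autoref{prp:PolarRiesz}(1) to $x$ to obtain a partial isometry $v \in A^{**}$ with $x = v|x|$ and with the property that $vy \in A$ for every $y \in \overline{|x|A}$.

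Now I set $z := xw^* \in A^{**}$ and rewrite $z = v|x|w^* = v a^{\frac{1}{2}}$. The main obstacle is to verify that $z$ actually lies in $A$, since a priori $w^*$ lives only in $A^{**}$. For this, I use that $0 \leq a \leq |x|^2$ places $a$ in the hereditary \csuba\ $\overline{|x|A|x|}$ generated by $|x|$; extracting a positive square root inside that \csuba\ gives $a^{\frac{1}{2}} \in \overline{|x|A|x|} \subseteq \overline{|x|A}$, so \autoref{prp:PolarRiesz}(1) yields $z = v a^{\frac{1}{2}} \in A$.

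To finish, I compute $z^*z = w(x^*x)w^* = (w|x|)(|x|w^*) = a^{\frac{1}{2}} a^{\frac{1}{2}} = a$, while $zz^* = xw^*wx^* \leq xx^* = c$, where the inequality uses that $w$ is a contraction and hence $w^*w \leq 1$. Setting $d := zz^* \in A_+$ then gives $a \approx d \leq c$, as required.
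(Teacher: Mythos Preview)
Your proof is correct, but the route is different from the paper's. The paper dispatches the lemma in one line by invoking Riesz Decomposition (\autoref{prp:PolarRiesz}(4)) with $m=n=1$: from $a^{\frac{1}{2}}a^{\frac{1}{2}} = a \leq x^*x$ one immediately obtains $z \in A$ with $z^*z = a$ and $zz^* \leq xx^* = c$. Your argument instead builds the witness $z$ by hand, combining both polar decompositions from \autoref{prp:PolarRiesz}(1) and~(2) and checking membership in $A$ via the hereditary subalgebra generated by $|x|$. In effect you are reproving the $m=n=1$ case of Pedersen's Riesz Decomposition from more primitive ingredients. The paper's approach is shorter and makes the dependence on \autoref{prp:PolarRiesz}(4) transparent; yours is more self-contained and gives an explicit formula $z = v a^{\frac{1}{2}}$ for the element realizing the equivalence, which is occasionally useful elsewhere.
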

\begin{proof}
Choose $x \in A$ such that $b = x^*x$, and $xx^* = c$.
Applying Riesz Decomposition for $a = a^{\frac{1}{2}}a^{\frac{1}{2}} \leq x^*x$ (\autoref{prp:PolarRiesz}~(4) with $m=n=1$), we obtain $z \in A$ such that
\[
a = a^{\frac{1}{2}}a^{\frac{1}{2}} = z^*z, \andSep
zz^* \leq xx^* = c,
\]
which shows that $a \approx zz^* \leq c$, as desired.
\end{proof}

%==========================================================================================
\begin{lma}
\label{prp:PropertiesRelations}
Let $A$ be a \ca.
Then:
\begin{enumerate}
\item
The relation $\approx$ on $A_+$ is an equivalence relation.
\item
The relation $\lessapprox$ is a pre-order (reflexive and transitive).
\item
The relation $\lessapprox_+$ is a pre-order that is additive in the sense that $a_1 \lessapprox_+ b_1$ and $a_2 \lessapprox_+ b_2$ imply $a_1+a_2 \lessapprox_+ b_1+b_2$.
\item
The relation $\lhd$ is an additive pre-order. % satisfying $2a \lhd a$ for all $a \in A_+$.
\item
The relations $\lessapprox$, $\lessapprox_+$ and $\lhd$ satisfy Riesz decomposition:
If $a,b_1,b_2 \in A_+$ satisfy $a \lessapprox b_1+b_2$, then there exist $a_1,a_2 \in A_+$ such that
\[
a = a_1+a_2, \quad
a_1 \lessapprox b_1, \andSep
a_2 \lessapprox b_2,
\]
and analogously with $\lessapprox_+$ or $\lhd$ in place of $\lessapprox$.
\end{enumerate}
\end{lma}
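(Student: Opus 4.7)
The plan is to handle the five claims in order, using Riesz refinement and decomposition (\autoref{prp:PolarRiesz}~(3) and~(4)) together with the swapping lemma (\autoref{prp:SwitchLeqApprox}) as the main workhorses.

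For (1), reflexivity and symmetry of $\approx$ are immediate from the factorization $a=(a^{1/2})^*a^{1/2}$. For transitivity, suppose $a=x^*x$, $b=xx^*=y^*y$, $c=yy^*$. Apply Riesz refinement to the equality $xx^*=y^*y$ (with $m=n=1$) to produce $z\in A$ with $a=x^*x=z^*z$ and $zz^*=yy^*=c$, so $a\approx c$. For (2), reflexivity of $\lessapprox$ is trivial, and for transitivity, if $a\approx a'\leq b$ and $b\approx b'\leq c$, then \autoref{prp:SwitchLeqApprox} gives $d$ with $a'\approx d\leq b'\leq c$; combining with transitivity of $\approx$ yields $a\approx d\leq c$.

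The key technical step is transitivity of $\lessapprox_+$ in (3), which I expect to be the main obstacle. Assume $a\lessapprox_+ b$ via $a=\sum_i a_i$, $a_i\approx b_i$, $\sum_i b_i\leq b$, and $b\lessapprox_+ c$ via $b=\sum_j b'_j$, $b'_j\approx c'_j$, $\sum_j c'_j\leq c$. First, apply Riesz decomposition to $\sum_i b_i\leq \sum_j b'_j$ to produce $z_{ij}$ with $b_i=\sum_j z_{ij}^*z_{ij}$ and $\sum_i z_{ij}z_{ij}^*\leq b'_j$; set $\beta_{ij}:=z_{ij}^*z_{ij}$ and $\gamma_{ij}:=z_{ij}z_{ij}^*$, so $\beta_{ij}\approx\gamma_{ij}$. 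Second, using the decomposition $b_i=\sum_j \beta_{ij}=x_ix_i^*$ (where $a_i=x_i^*x_i$), apply Riesz refinement to produce $u_{ij}$ with $a_i=\sum_j u_{ij}^*u_{ij}$ and $u_{ij}u_{ij}^*=\beta_{ij}$, thus splitting $a_i=\sum_j a_{ij}$ with $a_{ij}\approx\beta_{ij}$. Third, for each $j$, apply Riesz decomposition to $\sum_i\gamma_{ij}\leq b'_j$ together with $b'_j\approx c'_j$ to produce $\delta_{ij}\approx\gamma_{ij}$ with $\sum_i\delta_{ij}\leq c'_j$. Chaining through the transitivity of $\approx$ gives $a_{ij}\approx\delta_{ij}$, and $\sum_{i,j}\delta_{ij}\leq\sum_j c'_j\leq c$, so $a\lessapprox_+c$. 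Additivity of $\lessapprox_+$ and reflexivity are routine concatenations.

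Statement (4) is then a short consequence: $\lhd$ is reflexive since $a\lessapprox_+ a$; additive because $a_i\lessapprox_+ m_ib_i$ implies $a_1+a_2\lessapprox_+(m_1+m_2)(b_1+b_2)$; and transitive because $a\lessapprox_+ mb$ and $b\lessapprox_+ nc$ combine via additivity of $\lessapprox_+$ to give $a\lessapprox_+ mnc$. For (5), I first prove the case of $\lessapprox$: given $a\approx a'\leq b_1+b_2$, use classical Riesz decomposition (the special case of \autoref{prp:PolarRiesz}~(4) with scalar square roots) to split $a'=a'_1+a'_2$ with $a'_i\leq b_i$, then apply Riesz refinement to $a=x^*x$ with $xx^*=a'_1+a'_2$ to split $a=a_1+a_2$ with $a_i\approx a'_i\leq b_i$. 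For $\lessapprox_+$, first apply classical Riesz decomposition to $\sum_k b'_k\leq b_1+b_2$ to split each $b'_k=b'_{k,1}+b'_{k,2}$, then use the same Riesz-refinement-through-$\approx$ trick to split each $a_k$; regrouping yields the desired decomposition. Finally, for $\lhd$, apply the $\lessapprox_+$ case to $a\lessapprox_+ m(b_1+b_2)=mb_1+mb_2$.
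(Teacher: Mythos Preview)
Your approach is the paper's: Riesz Refinement for transitivity of $\approx$, \autoref{prp:SwitchLeqApprox} for transitivity of $\lessapprox$, and Pedersen's Riesz Decomposition for the rest. Your argument for transitivity of $\lessapprox_+$ in~(3) is more explicit than the paper's one-line appeal to \autoref{prp:PolarRiesz}~(4) and is correct apart from a harmless slip: in Step~2, Riesz Refinement applied to $w_iw_i^*=\sum_j z_{ij}^*z_{ij}$ yields $u_{ij}u_{ij}^*=z_{ij}z_{ij}^*=\gamma_{ij}$, not $\beta_{ij}$; since $\beta_{ij}\approx\gamma_{ij}$ this does not affect the chain $a_{ij}\approx\gamma_{ij}\approx\delta_{ij}$.

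There is, however, one genuine error in your treatment of~(5). You claim that ``classical Riesz decomposition'' splits $a'\leq b_1+b_2$ as $a'=a'_1+a'_2$ with $a'_i\leq b_i$. This order-theoretic Riesz Decomposition Property fails in general \ca{s} (already in $M_2(\CC)$). What \autoref{prp:PolarRiesz}~(4) actually delivers, taking $x=(a')^{1/2}$ and $y_k=b_k^{1/2}$, is $a'=z_1^*z_1+z_2^*z_2$ with $z_kz_k^*\leq b_k$, hence only $a'_k:=z_k^*z_k\lessapprox b_k$. The fix is immediate: your subsequent Riesz Refinement step gives $a_k\approx a'_k\lessapprox b_k$, and transitivity of $\lessapprox$ from~(2) finishes. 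The same correction applies to your $\lessapprox_+$ case. The paper sidesteps this two-stage detour altogether: writing $a=x^*x$ and $xx^*=a'\leq b_1+b_2$, a single application of \autoref{prp:PolarRiesz}~(4) with $m=1$ directly gives $a=x^*x=z_1^*z_1+z_2^*z_2$ with $z_kz_k^*\leq b_k$, so $a_k:=z_k^*z_k\lessapprox b_k$; the $\lessapprox_+$ case is the same argument with general~$m$.
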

\begin{proof}
It is clear that $\approx$, $\lessapprox$ and $\lessapprox_+$ are reflexive, and that $\approx$ is symmetric.
To show that $\approx$ is transitive, let $a,b,c \in A_+$ satisfy $a \approx b \approx c$.
Choose $x,y \in A$ such that
\[
a = x^*x, \quad
xx^* = b = y^*y, \andSep
yy^* = c.
\]
Using Riesz Refinement for $xx^* = y^*y$ (\autoref{prp:PolarRiesz}~(3) with $m=n=1$), there exists $z \in A$ such that
\[
x^*x = z^*z, \andSep
zz^* = yy^*,
\]
which shows that $a \approx c$.
Thus, $\approx$ is transitive.
(See also \cite[Theorem~3.5]{Ped98FactorizationCAlg}.)

We check that $\lessapprox$ is transitive.
Let $a,b,c\in A_+$ satisfy $a\lessapprox b\lessapprox c$. 
Find $a',b'\in A_+$ such that $a\approx a'\leq b\approx b'\leq c$. 
By \autoref{prp:SwitchLeqApprox}, there exists $d\in A_+$ such that $a'\approx d\leq b'$.
Using transitivity of $\approx$, we get $a\approx d\leq c$, as desired.

It is clear that $\lessapprox_+$ is additive.
Further, it follows from Riesz Decomposition (\autoref{prp:PolarRiesz}~(4)) that $\lessapprox_+$ is transitive.
Using that $\lessapprox_+$ is an additive partial order, the same follows for $\lhd$.
This verifies~(1)--(4).
	
We show Riesz Decomposition for $\lessapprox_+$. 
Let $a,b_1,b_2 \in A_+$ satisfy $a \lessapprox_+ b_1+b_2$.
By definition, we obtain $x_j \in A$ such that
\[
a = \sum_{j=1}^m x_j^*x_j, \andSep
\sum_{j=1}^m x_jx_j^* \leq b_1+b_2.
\]
Applying Riesz Decomposition (\autoref{prp:PolarRiesz}~(4)), we obtain $z_{jk} \in A$ for $j=1,\ldots,m$ and $k=1,2$ such that
\[
x_j^*x_j = \sum_{k=1}^2 z_{jk}^*z_{jk} \quad \text{for $j=1,\ldots,m$}, \andSep
\sum_{j=1}^m z_{jk}z_{jk}^* \leq b_k \quad \text{for $k=1,2$}.
\]
Set
\[
a_k := \sum_{j=1}^m z_{jk}^*z_{jk} \quad \text{ for $k=1,2$}.
\]
Then
\[
a 
= \sum_{j=1}^m x_j^*x_j
= \sum_{j=1}^m \sum_{k=1}^2 z_{jk}^*z_{jk}
= \sum_{k=1}^2 \sum_{j=1}^m  z_{jk}^*z_{jk}
= a_1 + a_2,
\]
and $a_k \lessapprox_+ b_k$ for $k=1,2$, as desired. 
Riesz Decomposition for $\lessapprox$ follows by letting $m=1$ in this argument. 
	
To verify Riesz Decomposition for $\lhd$, let $a,b_1,b_2 \in A_+$ satisfy $a \lhd b_1+b_2$.
Choose $m\in\NN$ such that $a \lessapprox_+ m(b_1+b_2) = (mb_1) + (mb_2)$.
Using that $\lessapprox_+$ satisfies Riesz Decomposition, we obtain $a_1,a_2 \in A_+$ such that $a=a_1+a_2$ and $a_k \lessapprox_+ mb_k$, and so $a_k \lhd b_k$ for $k=1,2$, as desired.
\end{proof}

%==========================================================================================
We will now study to what extent the relations from \autoref{dfn:Relations} are preserved under taking powers and roots.

%==========================================================================================
\begin{lma}
\label{prp:PowerApprox}
Let $A$ be a \ca, let $a,b \in A_+$, and let $s \in (0,\infty)$.
Then $a \approx b$ if and only if $a^s \approx b^s$.
\end{lma}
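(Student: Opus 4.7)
The plan is to construct, from a witness $x \in A$ of $a \approx b$, a witness $y \in A$ of $a^s \approx b^s$ via polar decomposition, and then to deduce the reverse implication by applying the forward direction with $s$ replaced by $1/s$.

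Assume $a \approx b$ and fix $x \in A$ with $x^*x = a$ and $xx^* = b$. Let $x = v|x|$ be the polar decomposition in $A^{**}$ from \autoref{prp:PolarRiesz}(1), and set $y := v|x|^s \in A^{**}$. The first thing to verify is that in fact $y \in A$: for $s \in (0,1)$ this is immediate from \autoref{prp:PolarRiesz}(1) applied with $\varepsilon = s$, while for $s \geq 1$ one can write $y = (v|x|^{1/2}) \cdot |x|^{s-1/2}$ and note that $v|x|^{1/2} \in A$ by \autoref{prp:PolarRiesz}(1) with $\varepsilon = 1/2$, and $|x|^{s-1/2} \in A_+$.

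Next I would compute the two relevant products. Since $v|x| = x$ implies $v^*v|x| = v^*x = |x|$, the support projection $v^*v \in A^{**}$ acts as the identity on $|x|$, hence on $|x|^s$ by functional calculus, giving
\[
y^*y = |x|^s v^*v |x|^s = |x|^{2s} = (x^*x)^s = a^s.
\]
For $yy^*$ I would invoke the standard intertwining $f(xx^*)v = vf(x^*x)$, valid for any continuous $f$ on $[0,\|x\|^2]$ with $f(0) = 0$; this can be verified on monomials $f(t) = t^n$ with $n \geq 1$ using $v^*v|x|^2 = |x|^2$ and extended by continuity. Applied to $f(t) = t^s$, it yields $(xx^*)^s v = v|x|^{2s}$, and multiplying on the right by $v^*$ gives $v|x|^{2s}v^* = (xx^*)^s vv^*$. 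Since $vv^*$ is the range projection of $xx^*$ in $A^{**}$, it acts as the identity on $(xx^*)^s$, so $yy^* = v|x|^{2s}v^* = (xx^*)^s = b^s$. Thus $y$ witnesses $a^s \approx b^s$. The reverse direction follows at once by applying the forward implication to the pair $(a^s, b^s)$ with exponent $1/s$: we obtain $a = (a^s)^{1/s} \approx (b^s)^{1/s} = b$.

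The main technical obstacle is the bookkeeping between $A$ and $A^{**}$: the partial isometry $v$ and the projections $v^*v$, $vv^*$ live only in the bidual, yet one needs $y = v|x|^s$ to land in $A$ and the spectral-calculus identities involving $v$ to hold in $A^{**}$. Once the intertwining $f(xx^*)v = vf(x^*x)$ is in hand, the remaining computations are purely algebraic.
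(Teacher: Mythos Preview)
Your proof is correct and follows essentially the same approach as the paper: both use the polar decomposition $x = v|x|$ in $A^{**}$, set $y = v|x|^s$, and verify $y^*y = a^s$ and $yy^* = b^s$, with the converse obtained by replacing $s$ with $1/s$. Your write-up is somewhat more detailed---you justify $y \in A$ via a case split (which is harmless but unnecessary, since $|x|^s \in \overline{|x|A}$ for all $s>0$) and you spell out the intertwining $f(xx^*)v = vf(x^*x)$ where the paper simply asserts $v(x^*x)^s v^* = (xx^*)^s$---but the argument is the same.
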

\begin{proof}
First assume that $a \approx b$.
Choose $x \in A$ such that $a = x^*x$ and $b = xx^*$.
By Polar Decomposition (\autoref{prp:PolarRiesz} (1)), there exists a partial isometry $v \in A^{**}$ such that $x = v|x|$ in $A^{**}$, and such that $y := v|x|^s$ belongs to~$A$.
Then we have
\[
y^*y = |x|^s v^*v |x|^s = |x|^{2s} = a^s, \andSep
yy^* = v|x|^{2s}v^* = v(x^*x)^sv^* = (xx^*)^s = b^s.
\]
This shows that $a^s \approx b^s$.
	
Conversely, assuming that $a^s \approx b^s$, we can apply the above argument for $\frac{1}{s}$, $a^s$ and $b^s$ and deduce that
\[
a = (a^s)^{\frac{1}{s}} \approx (b^s)^{\frac{1}{s}} = b,
\]
as desired.
\end{proof}

%==========================================================================================
\begin{lma}
\label{prp:PowerLessapprox}
Let $A$ be a \ca.
Then $a \lessapprox b$ if and only if $a^s \lessapprox b^s$, for all $a,b \in A_+$ and $s \in (0,\infty)$.
\end{lma}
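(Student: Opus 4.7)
The plan is to establish the forward implication $a \lessapprox b \Rightarrow a^s \lessapprox b^s$ for every $s \in (0, \infty)$; the reverse implication then follows immediately by applying the forward direction with exponent $1/s$ to the pair $(a^s, b^s)$, since $(a^s)^{1/s} = a$ and $(b^s)^{1/s} = b$. I would obtain the forward direction from two special cases, $s \in (0,1]$ and $s = 2$, which combine to cover all $s > 0$ by writing $s = 2^k r$ with $k \in \NN$ and $r \in (0,1]$ and then iterating.

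For $s \in (0, 1]$, given $x \in A$ with $a = x^*x$ and $xx^* \leq b$, \autoref{prp:PolarRiesz}(1) yields a partial isometry $v \in A^{**}$ with $x = v a^{1/2}$, and $v^*v$ equal to the support projection of $a$ in $A^{**}$. Setting $y := v a^{s/2}$, a short continuous functional calculus argument shows $a^{s/2} \in \overline{a^{1/2}A}$, so $y \in A$. Since $v^*v$ acts as identity on the continuous functional calculus of $a$, a direct computation gives $y^*y = a^s$ and $yy^* = v a^s v^*$; moreover, one checks by induction on integers and then by density that $(v a v^*)^t = v a^t v^*$ for all $t > 0$. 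Therefore $yy^* = (v a v^*)^s = (xx^*)^s \leq b^s$ by the L\"owner--Heinz inequality, proving $a^s \lessapprox b^s$.

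For $s = 2$, with the same $x$, I would write down the chain
\[
a^2 = x^*(xx^*)x \leq x^*bx = (b^{1/2}x)^*(b^{1/2}x) \approx (b^{1/2}x)(b^{1/2}x)^* = b^{1/2} xx^* b^{1/2} \leq b^2,
\]
in which the middle $\approx$ is witnessed by $z := b^{1/2}x$ and the two outer inequalities use $xx^* \leq b$ together with monotonicity of conjugation. Combined with the trivial fact that $\leq$ refines $\lessapprox$ and with the transitivity of $\lessapprox$ established in \autoref{prp:PropertiesRelations}(2), this yields $a^2 \lessapprox b^2$. Now for arbitrary $s > 0$, pick $k \in \NN$ with $r := s/2^k \in (0, 1]$: the $s \leq 1$ case gives $a^r \lessapprox b^r$, and iterating the $s = 2$ case $k$ times produces $a^{2^k r} \lessapprox b^{2^k r}$, i.e., $a^s \lessapprox b^s$. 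The conceptually subtle step is the $s = 2$ case, which sidesteps the failure of L\"owner--Heinz above exponent $1$ by routing through an $\approx$-step instead of trying to preserve $\leq$; everything else is bookkeeping.
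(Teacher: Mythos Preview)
Your proof is correct and follows essentially the same three-step strategy as the paper: handle $s\in(0,1]$ via L\"owner--Heinz, handle $s=2$ by routing through an $\approx$-step (your chain $a^2 \leq x^*bx \approx b^{1/2}xx^*b^{1/2}\leq b^2$ is the paper's $c^2 \leq c^{1/2}bc^{1/2}\approx b^{1/2}cb^{1/2}\leq b^2$ with $c=xx^*$ replaced by direct use of $x$), and iterate. The only cosmetic difference is that you inline the argument for $a\approx c \Rightarrow a^s\approx c^s$ in the first step, whereas the paper invokes it as the already-proved \autoref{prp:PowerApprox}.
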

\begin{proof}
As in the proof of \autoref{prp:PowerApprox}, it suffices to show that for every $s \in (0,\infty)$ the following statement holds:
\[
(C_s) : \text{For all $a,b \in A_+$, if $a \lessapprox b$ then $a^s \lessapprox b^s$.}
\]

\textbf{Step~1:} 
\emph{Given $s \in (0,1]$, we verify $(C_s)$.}
Let $a,b \in A_+$ satisfy $a \lessapprox b$.
Choose $c \in A_+$ such that $a \approx c \leq b$.
Using \autoref{prp:PowerApprox} at the first step, and using that the function $t \mapsto t^s$ is operator monotone, \cite[Proposition~II.3.1.10]{Bla06OpAlgs}, at the second step, we get
\[
a^s \approx c^s \leq b^s.
\]
and thus $a^s \lessapprox b^s$, as desired.

\medskip
	
\textbf{Step~2:} 
\emph{We verify $(C_2)$.}
Let $a,b \in A_+$ satisfy $a \lessapprox b$.
Choose $c \in A_+$ such that $a \approx c \leq b$.
By \autoref{prp:PowerApprox}, we have $a^2 \approx c^2$.
Further, we get
\begin{align*}
c^2 
&= c^{\frac{1}{2}}cc^{\frac{1}{2}}
\leq c^{\frac{1}{2}}bc^{\frac{1}{2}}
= \big( c^{\frac{1}{2}} b^{\frac{1}{2}} \big) \big( b^{\frac{1}{2}}c^{\frac{1}{2}} \big) \\
&\approx \big( b^{\frac{1}{2}}c^{\frac{1}{2}} \big) \big( c^{\frac{1}{2}} b^{\frac{1}{2}} \big)
=  b^{\frac{1}{2}} c b^{\frac{1}{2}} 
\leq b^{\frac{1}{2}} b b^{\frac{1}{2}} 
= b^2.
\end{align*}
Using that $\lessapprox$ is transitive by \autoref{prp:PropertiesRelations}, we deduce that $a^2 \lessapprox b^2$.

\medskip 

\textbf{Step~3:} 
\emph{Given $r \in (0,1]$, we verify $(C_s)$ for all $s=2^nr$ by induction on $n\in\NN$.}
The case $n=0$ was shown in Step~1.
Assume that we have verified $(C_{2^nr})$. 
To verify $(C_{2^{n+1}r})$, let $a,b \in A_+$ satisfy $a \lessapprox b$.
By Step~2, we have $a^2 \lessapprox b^2$.
Using that $(C_{2^{n}r})$ holds by assumption, we get
\[
a^{2^{n+1}r} 
= \big( a^2 \big)^{2^nr}
\lessapprox \big( b^2 \big)^{2^nr}
= b^{2^{n+1}r}.
\]
	
This finishes the proof, since every $s \in (0,\infty)$ is of the form $s=2^nr$ for suitable $n \in \NN$ and $r\in(0,1]$.
\end{proof}

%==========================================================================================
\begin{lma}
\label{prp:PowerSumLhd}
Let $A$ be a \ca, let $a_1,\ldots,a_n \in A_+$, and let $s \in (0,\infty)$.
Then 
\[
\sum_{j=1}^{n} a_j^s \lhd \Bigl(\sum_{j=1}^{n} a_j\Bigr)^s, \andSep
\Bigl(\sum_{j=1}^{n} a_j\Bigr)^s \lhd \sum_{j=1}^{n} a_j^s.
\]
\end{lma}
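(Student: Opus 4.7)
The plan is to prove the two $\lhd$-relations separately. The first, $\sum_{j=1}^n a_j^s \lhd \bigl(\sum_{j=1}^n a_j\bigr)^s$, follows quickly: for each $j$ we have $a_j \leq \sum_k a_k$, so $a_j \lessapprox \sum_k a_k$ via $a_j \approx a_j$; \autoref{prp:PowerLessapprox} then gives $a_j^s \lessapprox \bigl(\sum_k a_k\bigr)^s$, and summing with the additivity of $\lessapprox_+$ (\autoref{prp:PropertiesRelations}) yields $\sum_j a_j^s \lessapprox_+ n\bigl(\sum_k a_k\bigr)^s$, which already witnesses the $\lhd$-relation.

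For the reverse, $\bigl(\sum_j a_j\bigr)^s \lhd \sum_j a_j^s$, the idea is to pass to $M_n(A)$ so that $\sum_k a_k$ is realised as $ww^*$ for a single element $w$. I would take $w \in M_n(A)$ whose first row is $(a_1^{1/2},\ldots,a_n^{1/2})$ and whose other rows are zero. Then $ww^*$ has $\sum_k a_k$ in the $(1,1)$-entry and zero elsewhere, while $w^*w$ has $(j,k)$-entry $a_j^{1/2}a_k^{1/2}$. A direct block decomposition shows that $w^*w \leq n \cdot \mathrm{diag}(a_1,\ldots,a_n)$ in $M_n(A)$: the difference is the sum, over pairs $j<k$, of the positive rank-one matrices supported on the $\{j,k\}$-block and given there by $\bigl(\begin{smallmatrix}a_j^{1/2}\\-a_k^{1/2}\end{smallmatrix}\bigr)\bigl(\begin{smallmatrix}a_j^{1/2} & -a_k^{1/2}\end{smallmatrix}\bigr)$. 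Applying \autoref{prp:PowerApprox} and \autoref{prp:PowerLessapprox} inside $M_n(A)$ then yields $(ww^*)^s \approx (w^*w)^s \lessapprox n^s \cdot \mathrm{diag}(a_1^s,\ldots,a_n^s)$.

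To conclude, I would transfer this $M_n(A)$-relation back to $A$. Unpacking $\lessapprox$ produces $x \in M_n(A)$ with $x^*x = \mathrm{diag}\bigl((\sum_k a_k)^s, 0, \ldots, 0\bigr)$ and $xx^* \leq n^s \cdot \mathrm{diag}(a_1^s,\ldots,a_n^s)$. The vanishing of the $(j,j)$-entries of $x^*x$ for $j \geq 2$ forces all columns of $x$ past the first to vanish; writing $x_j$ for the $(j,1)$-entry, one reads off $\sum_j x_j^*x_j = (\sum_k a_k)^s$ and, from the $(j,j)$-entry of $xx^*$, that $x_j x_j^* \leq n^s a_j^s$. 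Hence $x_j^* x_j \lessapprox n^s a_j^s$ in $A$, and assembling these gives $(\sum_k a_k)^s \lessapprox_+ n^s \sum_j a_j^s$, proving $(\sum_k a_k)^s \lhd \sum_k a_k^s$. The main obstacle will be the matrix inequality $w^*w \leq n \cdot \mathrm{diag}(a_1,\ldots,a_n)$ together with the bookkeeping required to translate the resulting $\lessapprox$-relation in $M_n(A)$ into a $\lessapprox_+$-relation in $A$; the guiding idea is that placing the $a_j$ on the diagonal in $M_n(A)$ makes them behave like orthogonal commuting elements, so that powers of $\mathrm{diag}(a_1,\ldots,a_n)$ decompose as $\mathrm{diag}(a_1^s,\ldots,a_n^s)$.
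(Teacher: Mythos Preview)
Your argument is correct, and it differs genuinely from the paper's. The first inequality is proved exactly as the paper does. For the reverse inequality, the paper splits into two cases: for $s \geq 1$ it uses operator monotonicity of $t \mapsto t^{1/s}$ to get $\sum_j a_j \leq n\bigl(\sum_j a_j^s\bigr)^{1/s}$ and then applies \autoref{prp:PowerLessapprox}; for $s \in (0,1)$ it works in $A^{**}$, writing $a_j^{1/2} = v_j a^{1/2}$ with contractions $v_j$, and invokes operator concavity via the Jensen-type inequality $v_j a^s v_j^* \leq (v_j a v_j^*)^s$ to obtain $a^s \lessapprox_+ \sum_j a_j^s$. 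Your matrix trick avoids this case split entirely: embedding the problem in $M_n(A)$ makes the $a_j$ orthogonal, so powers of $\mathrm{diag}(a_1,\ldots,a_n)$ split blockwise, and the single inequality $w^*w \leq n\,\mathrm{diag}(a_1,\ldots,a_n)$ together with \autoref{prp:PowerLessapprox} (applied once, in $M_n(A)$) handles all $s$ uniformly. The trade-off is that the paper's argument stays inside $A$ but needs the operator-concavity machinery, whereas yours passes to matrices and then has to translate the $\lessapprox$-relation back to a $\lessapprox_+$-relation in $A$; your bookkeeping for that step (forcing the columns of $x$ past the first to vanish, then reading off $x_j x_j^* \leq n^s a_j^s$ from the diagonal) is correct.
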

\begin{proof}
First, we verify that $\sum_{j=1}^{n} a_j^s \lhd \bigl(\sum_{j=1}^{n} a_j\bigr)^s$.
We have $a_j \leq \sum_{j=1}^{n} a_j$, and therefore $a_j^s \lessapprox \bigl(\sum_{j=1}^{n} a_j\bigr)^s$ by \autoref{prp:PowerLessapprox}.
Thus $\sum_{j=1}^{n} a_j^s \lessapprox_+ n\bigl(\sum_{j=1}^{n} a_j\bigr)^s$, which gives the desired result.
	
\medskip
	
Next, we verify the second part of the statement for $s$ in the range $[1,\infty)$.
We have $a_j^s \leq \sum_{j=1}^{n} a_j^s$.
Using that the function $t \mapsto t^{\frac{1}{s}}$ is operator monotone, we get
\[
\sum_{j=1}^{n} a_j = \sum_{j=1}^{n} (a_j^s)^{\frac{1}{s}} \leq \sum_{j=1}^{n}\Big( \sum_{k=1}^{n} a_k^s \Big)^{\frac{1}{s}} = n\Big( \sum_{j=1}^{n} a_j^s \Big)^{\frac{1}{s}}.
\]
Applying \autoref{prp:PowerLessapprox}, we get
\[
\Bigl(\sum_{j=1}^{n} a_j \Bigr)^{s} 
\lessapprox 
\Bigl(n\Big( \sum_{j=1}^{n} a_j^s \Big)^{\frac{1}{s}}\Bigr)^s
= n^s \sum_{j=1}^{n} a_j^s ,
\]
which implies that $\bigl(\sum_{j=1}^{n} a_j\bigr)^s \lhd \sum_{j=1}^{n} a_j^s$.

\medskip

Finally, we prove the second inequality for $s \in (0,1)$.
%Finally, we verify the second part of the statement for $s$ in the range $(0,1)$.
We first establish:

\textbf{Claim:}
\emph{Given $e,f \in A$ and $d \in A_+$ with $ed^{1+n}=fd^{1+n}$, we have $ed=fd$.}
It suffices to prove the claim for $n=1$, since then repeated application reduces the exponent of $d$ to $1$.
Thus, assume that $ed^2 = fd^2$. 
Then $(e-f)d^2(e-f)^*=0$, and the $C^*$-identity gives $(e-f)d=0$.
This proves the claim.

\medskip

Set $a := \sum_{j=1}^n a_j$. 
For each $j$, using that $a_j \leq a$, we apply \cite[Proposition~I.5.2.4]{Bla06OpAlgs} in $A^{**}$ to obtain a contractive element $v_j \in A^{**}$ such that
\[
a_j^{\frac{1}{2}} = v_j a^{\frac{1}{2}}.
\]
Then
\[
a 
= \sum_{j=1}^n a_j
= \sum_{j=1}^n a^{\frac{1}{2}}v_j^*v_j a^{\frac{1}{2}}
= a^{\frac{1}{2}} \Big( \sum_{j=1}^n v_j^*v_j \Big) a^{\frac{1}{2}}.
\]
Choose $n \geq 1$ such that $\frac{1}{n+1} \leq s$ and set $t := \frac{1-s}{n}$ and $r := 1-(n+1)t$.
Then $r \geq 0$, $1 = r + (n+1)t$ and $nt = 1-s$.
We have
\[
a^{\frac{nt}{2}} a^{\frac{t}{2}} a^{r} a^{\frac{t}{2}} a^{\frac{nt}{2}}
= a
= a^{\frac{1}{2}} \Big( \sum_{j=1}^n v_j^*v_j \Big) a^{\frac{1}{2}}
= a^{\frac{nt}{2}}a^{\frac{t}{2}}a^{\frac{r}{2}} \Big( \sum_{j=1}^n v_j^*v_j \Big) a^{\frac{r}{2}}a^{\frac{t}{2}}a^{\frac{nt}{2}}.
\]
Apply the Claim to `cancel' $a^{\frac{nt}{2}}$ on the right (and analogously on the left), to get
\[
a^s
= a^{\frac{t}{2}} a^{r} a^{\frac{t}{2}} 
= a^{\frac{t}{2}} a^{\frac{r}{2}} \Big( \sum_{j=1}^n v_j^*v_j \Big) a^{\frac{r}{2}} a^{\frac{t}{2}}
= a^{\frac{s}{2}} \Big( \sum_{j=1}^n v_j^*v_j \Big) a^{\frac{s}{2}}.
\]
Using that $v_j a^{\frac{1}{2}} \in A$, it follows that $x_j := v_j a^{\frac{s}{2}}$ belongs to $A$ as well.
Using also that the map $t \mapsto t^s$ is operator concave (\cite[Theorem~V.2.5]{Bha97Matrix}), and that therefore $wd^sw^* \leq (wdw^*)^s$ for every contraction $w$ and every positive element $d$ (\cite[Theorem~V.2.3(ii)]{Bha97Matrix}), we get
\[
\Bigl(\sum_{j=1}^{n} a_j \Bigr)^{s} 
= a^s	
= \sum_{j=1}^n x_j^*x_j
\lessapprox_+ \sum_{j=1}^n x_jx_j^*
= \sum_{j=1}^n v_ja^sv_j^*
\leq \sum_{j=1}^n \big( v_jav_j^* \big)^s
= \sum_{j=1}^n a_j^s,
\]
as desired.
\end{proof}

%==========================================================================================
\begin{prp}
\label{prp:PowerLhd}
Let $A$ be a \ca, let $a,b \in A_+$, and let $s \in (0,\infty)$.
Then $a \lhd b$ if and only if $a^s \lhd b^s$.
\end{prp}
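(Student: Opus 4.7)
The plan is to reduce the equivalence to a single direction: namely, if $a \lhd b$ then $a^s \lhd b^s$. The converse follows by applying this implication to $a^s$, $b^s$, and exponent $1/s$, using that $(a^s)^{1/s} = a$.

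So assume $a \lhd b$, and unpack the definition to obtain $m \in \NN$ and $a_1,\ldots,a_n,b_1,\ldots,b_n \in A_+$ such that $a = \sum_j a_j$, $a_j \approx b_j$ for each $j$, and $\sum_j b_j \leq mb$. The strategy is to build the chain
\[
a^s = \Bigl(\sum_{j} a_j\Bigr)^s \ \lhd \ \sum_{j} a_j^s \ \lhd \ \sum_{j} b_j^s \ \lhd \ \Bigl(\sum_{j} b_j\Bigr)^s \ \lhd \ (mb)^s \ = \ m^s b^s \ \lhd \ b^s,
\]
and then appeal to transitivity of $\lhd$ from \autoref{prp:PropertiesRelations}. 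The first and third $\lhd$'s are instances of \autoref{prp:PowerSumLhd}. The second uses \autoref{prp:PowerApprox} to upgrade each $a_j \approx b_j$ to $a_j^s \approx b_j^s$; by definition of $\lessapprox_+$ (with the trivial bound $\sum_j b_j^s \leq \sum_j b_j^s$) this gives $\sum_j a_j^s \lessapprox_+ \sum_j b_j^s$, hence $\lhd$. The fourth uses \autoref{prp:PowerLessapprox} applied to $\sum_j b_j \leq mb$, yielding $(\sum_j b_j)^s \lessapprox (mb)^s$, hence $\lhd$. For the final step, $m^s b^s \leq \lceil m^s \rceil b^s$ gives $m^s b^s \lessapprox \lceil m^s \rceil b^s$, which is $\lessapprox_+ \lceil m^s \rceil b^s$, hence $m^s b^s \lhd b^s$ by the definition of $\lhd$.

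The only real subtlety is the last step, where the constant $m^s$ produced by raising to the power $s$ need not be an integer. This is precisely the reason the relation $\lhd$ was introduced: it absorbs multiplication by arbitrary natural numbers, so any positive real scalar can be accommodated by passing to the ceiling. Once this observation is made, the argument is essentially bookkeeping: combine the power laws for $\approx$, $\lessapprox$, and $\lessapprox_+$ already established in \autoref{prp:PowerApprox}, \autoref{prp:PowerLessapprox}, and \autoref{prp:PowerSumLhd} with the fact that $\lessapprox$ and $\lessapprox_+$ both refine $\lhd$, and chain them using transitivity. I do not expect any genuine difficulty beyond writing the chain out carefully.
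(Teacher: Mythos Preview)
Your proof is correct and follows essentially the same route as the paper: reduce to the forward implication, unpack the definition of $\lhd$, and chain together \autoref{prp:PowerSumLhd}, \autoref{prp:PowerApprox}, and \autoref{prp:PowerLessapprox} via transitivity. Your treatment of the final step $(mb)^s = m^s b^s \lhd b^s$ via the ceiling is slightly more explicit than the paper's, but the argument is otherwise identical.
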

\begin{proof}
As in the proof of \autoref{prp:PowerApprox}, it suffices to show the forward implication.
Let $a,b \in A_+$ satisfy $a \lhd b$, and let $s \in (0,\infty)$.
By definition, we obtain natural numbers $n,m\in\NN$ with $n \geq 1$, and elements $a_1,\ldots,a_n, b_1,\ldots,b_n \in A_+$ such that
\[
a = \sum_{j=1}^n a_j, \quad
a_j \approx b_j \quad \text{for $j=1,\ldots,n$}, \andSep
\sum_{j=1}^nb_j \leq mb.
\]
	
By \autoref{prp:PowerApprox}, we have $a_j^s \approx b_j^s$ for each $j$, and consequently $\sum_{j} a_j^s \lhd \sum_{j} b_j^s$.
Applying \autoref{prp:PowerSumLhd} at the second and fourth step, and using also \autoref{prp:PowerLessapprox} at the fifth step, we get
\[
a^s
= \Big( \sum_{j=1}^n a_j \Big)^s
\lhd \sum_{j=1}^n a_j^s
\lhd \sum_{j=1}^n b_j^s
\lhd \Big( \sum_{j=1}^n b_j \Big)^s
\lhd ( mb )^s
\lhd b^s.
\]
Since $\lhd$ is transitive by \autoref{prp:PropertiesRelations}, we deduce that $a^s \lhd b^s$.
\end{proof}

%==========================================================================================
%==========================================================================================
\section{Powers and roots of Dixmier ideals}
\label{sec:DixmierIdeals}

%==========================================================================================
In this section, we introduce and develop the concept of Dixmier ideals in \ca{s}; 
see \autoref{dfn:DixmierIdeal}.
Every norm-closed ideal is Dixmier, and every Dixmier ideal is self-adjoint and closed under scalar multiplication.

We show that Dixmier ideals in a \ca{} naturally correspond to strongly invariant order ideals of its positive cone;
see \autoref{prp:DixmierVsStrInvOrderIdeal}.
Using that the relation $\lhd$ from \autoref{sec:RelationPositive} determines strongly invariant order ideals (\autoref{prp:LHD}), we apply \autoref{prp:PowerLhd} to show that there is a well-behaved theory of powers and roots for strongly invariant order ideals, and consequently for Dixmier ideals;
see \autoref{prp:PowerOrderIdeal} and \autoref{prp:PowersDixmierIdeal}. 

%==========================================================================================
\begin{pgr}
\label{pgr:OrderIdeal}
Given a \ca{} $A$, an \emph{order ideal} in $A_+$ is an additive submonoid $M \subseteq A_+$ that is hereditary in the sense that if $a,b \in A_+$ satisfy $a \leq b$, and $b$ belongs to $M$, then so does $a$.
Note that order ideals in $A_+$ are also closed under multiplication by positive real scalars.
For further details, we refer to \cite{Eff63OrderIdlCAlg}.

Following Pedersen~\cite{Ped66MsrThyCAlg1}, we say that an order ideal $M \subseteq A_+$ is \emph{invariant} if $x^*Mx \subseteq M$ for all $x\in A$, or equivalently $u^*Mu=M$ for all unitaries~$u$ in the minimal unitization $\widetilde{A}$.
Further $M$ is \emph{strongly invariant} if for all $x \in A$, we have $x^*x \in M$ if and only if $xx^* \in M$.
If $M$ is strongly invariant, then it is invariant, but the converse does not hold;
see \cite[Proposition~1.3]{Ped69MsrThyCAlg34}.

We say that an ideal~$I$ in $A$ is \emph{positively spanned} if $I = \linspan(I_+)$ for $I_+ := A_+ \cap I$.
We say $I$ is \emph{self-adjoint} if $I=I^*$, namely if for every $a \in A$ we have $a \in I$ if and only if $a^* \in I$.
We further say that $I$ is \emph{closed under scalar multiplication} if $\lambda x \in I$ for all $\lambda\in\CC$ and $x \in I$.
Positively spanned ideals are clearly self-adjoint and closed under scalar multiplication.
As noted before, ideals in \ca{s} are not necessarily self-adjoint nor closed under scalar multiplication;
see \cite[Examples~II.5.2.1]{Bla06OpAlgs}.

An ideal $I$ is \emph{hereditary} if $I_+$ is an order ideal.
In this case, $I_+$ is an invariant order ideal.
As shown in \cite[Corollary~1.2]{Ped66MsrThyCAlg1}, the map $I \mapsto I_+$ establishes a natural bijection between positively spanned, hereditary ideals of $A$ (which are called order-related ideals in \cite{Ped66MsrThyCAlg1}) and invariant order ideals of $A_+$, with inverse given by sending an invariant order ideal $M$ to $\linspan(M)$.

In parts of the literature, `positively spanned' ideals are referred to as `positively generated'.
We avoid the latter, since it could be interpreted as meaning that~$I$ is generated by $I_+$ (as an ideal), that is, $I$ is equal to the smallest ideal containing~$I_+$. 
\end{pgr}

%==========================================================================================
Of particular importance are the positively spanned, hereditary ideals $I$ such that $I_+$ is also strongly invariant.
Such ideals were first considered by Dixmier, which justifies the following definition.

%==========================================================================================
\begin{dfn}
\label{dfn:DixmierIdeal}
A \emph{Dixmier ideal} in a \ca{} is a (not necessarily norm-closed) positively spanned, hereditary, two-sided ideal $I$ such that $I_+$ is strongly invariant.
\end{dfn}

%==========================================================================================
Dixmier was the first to study ideals as in \autoref{dfn:DixmierIdeal}.
Indeed, in his 1957 book (with English translation \cite{Dix81VNA}), he showed that every ideal in a von Neumann algebra is positively spanned and hereditary, and it follows easily that every ideal in a von Neumann algebra is a Dixmier ideal in the sense of \autoref{dfn:DixmierIdeal};
see \autoref{prp:IdealsVNA}.
Dixmier also considered positively spanned, hereditary ideals with strongly invariant positive cone in the setting of \ca{s} in Section~4.5 in \cite{Dix77CAlgebras}.

For von Neumann algebras, Dixmier even developed a theory of powers of ideals \cite{Dix52Remarques}.
Our theory of powers of Dixmier ideals can be viewed as the generalization of his theory to the \ca{ic} setting.

In \cite{Ror19FxPtConeTraces}, R{\o}rdam calls an ideal in a \ca{} \emph{symmetric} if its positive part is strongly invariant, and he systematically studies traces on symmetric, heredtiary ideals.
Note that an ideal in a \ca{} is Dixmier if and only if it is hereditary, symmetric, and spanned by its positive part;
see also \cite[Lemma~2.3]{Ror19FxPtConeTraces}.

For future reference, we note the following consequence of \autoref{pgr:OrderIdeal} for Dixmier ideals. 

%==========================================================================================
\begin{prp}
\label{prp:DixmierVsStrInvOrderIdeal}
Let $A$ be a \ca.
Then Dixmier ideals in~$A$ are in natural one-to-one correspondence with strongly invariant order ideals in $A_+$.
In one direction, given a Dixmier ideal $I \subseteq A$, the positive part $I_+$ is a strongly invariant order ideal such that $I = \linspan(I_+)$.
Conversely, given a strongly invariant order ideal $M \subseteq A_+$, the linear span $\linspan(M)$ is a Dixmier ideal such that $M = \linspan(M)_+$.
\end{prp}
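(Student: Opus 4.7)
The plan is to reduce the statement to the bijection already recorded in \autoref{pgr:OrderIdeal}, which (by Pedersen \cite{Ped66MsrThyCAlg1}) matches positively spanned hereditary ideals of $A$ with invariant order ideals of $A_+$ via $I\mapsto I_+$ and $M\mapsto \linspan(M)$. Since a Dixmier ideal is precisely a positively spanned hereditary ideal whose positive part is \emph{strongly} invariant, it suffices to check that strong invariance is preserved and reflected under the bijection; the main point is just to spell out that the two constructions are mutually inverse.

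First I would handle the forward direction. Let $I\subseteq A$ be a Dixmier ideal. By \autoref{dfn:DixmierIdeal}, $I$ is positively spanned (so $I=\linspan(I_+)$) and hereditary, so $I_+$ is an order ideal, and $I_+$ is strongly invariant by hypothesis. Strong invariance implies invariance (see \autoref{pgr:OrderIdeal}), so $I_+$ is in particular an invariant order ideal, and the bijection from \autoref{pgr:OrderIdeal} applies.

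Next I would handle the converse. Let $M\subseteq A_+$ be a strongly invariant order ideal. In particular $M$ is an invariant order ideal, so \autoref{pgr:OrderIdeal} yields that $I:=\linspan(M)$ is a positively spanned, hereditary two-sided ideal with $I_+=M$. Since $M=I_+$ is strongly invariant by assumption, $I$ satisfies \autoref{dfn:DixmierIdeal} and is therefore a Dixmier ideal.

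Finally, that the two assignments are mutually inverse — that is, $\linspan(I_+)=I$ for every Dixmier ideal $I$, and $\linspan(M)_+=M$ for every strongly invariant order ideal $M$ — is already part of the Pedersen bijection recalled in \autoref{pgr:OrderIdeal}, so no extra work is required. There is no real obstacle here: the only non-formal ingredient is the Pedersen correspondence, and strong invariance is transported trivially because $I_+$ and $M$ coincide on both sides of the bijection.
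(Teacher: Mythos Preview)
Your proposal is correct and follows essentially the same approach as the paper: both reduce to the Pedersen bijection of \autoref{pgr:OrderIdeal} between positively spanned hereditary ideals and invariant order ideals, and then observe that the extra condition of strong invariance on $I_+$ (respectively $M$) is exactly what singles out Dixmier ideals on one side and strongly invariant order ideals on the other. The paper packages this as a restriction of the top-row bijection in a commutative diagram, but the content is identical to what you wrote.
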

\begin{proof}
Consider the following diagram:
\[
\xymatrix@R-5pt{
\left\{\parbox{3.3cm}{positively spanned, \\ hereditary ideals in $A$}\right\} \ar@{<->}[r] \ar@{}[d]|{\subseteqRotatedUp}
& \Big\{ \text{invariant order ideals in $A_+$} \Big\} \ar@{}[d]|{\subseteqRotatedUp} \\
\Big\{ \text{Dixmier ideals in $A$} \Big\} \ar@{<->}[r]
& \left\{\parbox{2.8cm}{strongly invariant \\ order ideals in $A_+$} \right\}
}
\]
The bijection in the top row is given by the assignments $I \mapsto I_+$ and $M \mapsto \linspan(M)$;
see \autoref{pgr:OrderIdeal} and \cite[Corollary~1.2]{Ped66MsrThyCAlg1}.
It follows from the definition of Dixmier ideals that this restricts to the bijection in the bottom row;
see also \cite[Theorem~1.1]{PedPet70IdealsCAlg}.
\end{proof}

%==========================================================================================
It essentially follows from results of Dixmier that ideals in von Neumann algebras are automatically Dixmier ideals.

%==========================================================================================
\begin{prp}[Dixmier]
\label{prp:IdealsVNA}
Every ideal in a von Neumann algebra is Dixmier.
\end{prp}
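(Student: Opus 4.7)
The plan is to verify the three defining properties of a Dixmier ideal: positive spanning, hereditariness, and strong invariance of the positive cone. The key advantage in the von Neumann setting is that the polar decompositions of \autoref{prp:PolarRiesz} (1) and (2), which in general only live in the bidual, remain inside the algebra itself; together with the Borel functional calculus, this is essentially all one needs.

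First I would check positive spanning, which I would split into self-adjointness and the claim that positive and negative parts of self-adjoint elements in $I$ lie in $I$. For self-adjointness, given $x \in I$ the polar decomposition $x = v|x|$ has $v \in M$, so $|x| = v^*x \in I$ and $x^* = |x|v^* \in I$. For the positive/negative parts of a self-adjoint $a \in I$, the spectral projection $p \in M$ onto $[0,\infty)$ gives $a_+ = ap \in I$ and $a_- = -a(1-p) \in I$. Decomposing an arbitrary $x \in I$ into real and imaginary parts and then into positive parts finishes this step.

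Next I would handle hereditariness. Given $a, b \in M_+$ with $a \le b$ and $b \in I$, \autoref{prp:PolarRiesz}~(2) applied inside $M$ (rather than inside $M^{**}$) provides a contraction $w \in M$ with $a^{1/2} = w b^{1/2}$; since $a^{1/2}$ is self-adjoint we also have $a^{1/2} = b^{1/2}w^*$, and therefore
\[
a = (wb^{1/2})(b^{1/2}w^*) = wbw^* \in I,
\]
using that $I$ is two-sided.

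Finally I would prove strong invariance. For $x \in M$ with $x^*x \in I$, polar decomposition $x = v|x|$ in $M$ yields
\[
xx^* = v|x|^2 v^* = v(x^*x)v^* \in I,
\]
and applying the same argument to $x^*$ in place of $x$ gives the converse implication. None of these steps should present a real obstacle; the main subtlety to keep straight is simply that in a von Neumann algebra the partial isometries and contractions furnished by polar decomposition already belong to the algebra, so each appeal to \autoref{prp:PolarRiesz} delivers an element of $M$ and the two-sided ideal property of $I$ immediately closes the argument.
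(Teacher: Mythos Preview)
Your proof is correct and aligns with the paper's: the paper cites Propositions~1.6.9 and~1.6.10 of Dixmier's von Neumann algebra book for positive spanning and hereditariness, and then proves strong invariance exactly as you do via $xx^* = v(x^*x)v^*$ from the polar decomposition $x = v|x|$ in $M$. Your version simply supplies directly the two facts the paper outsources to the reference; one small step you use implicitly is that decomposing $x$ into real and imaginary parts requires closure of $I$ under scalar multiplication, which is automatic here because $M$ is unital.
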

\begin{proof}
Let $I$ be an ideal in a von Neumann algebra $M$.
By Propositions~1.6.9 and~1.6.10 in \cite{Dix81VNA}, $I$ is positively spanned, and $I_+$ is an invariant order ideal in $M_+$.
To see that $I_+$ is also strongly invariant, let $x \in M$ such that $x^*x \in I_+$.
Consider the polar decomposition $x=v|x|$ in $M$.
Then $xx^* = v(x^*x)v^* \in I_+$.
\end{proof}

%==========================================================================================
It is also easy to see that norm-closed ideals are Dixmier ideals, using that \ca{s}
are semisimple (the intersection of their primitive ideals is trivial).
An alternative way to see this is to use that norm-closed ideals are known to be semiprime, in combination with our result \autoref{prp:SemiprimeDixmier} that semiprime ideals are Dixmier.
We illustrate an example of a non-Dixmier ideal that is very close to a Dixmier ideal. 
We will show in \autoref{prp:IntExt} that this is a general phenomenon.

%==========================================================================================
\begin{exa}
\label{exa:NotDixmier}
Consider $A=C([-1,1])$ and the principal ideal $I=fA$ generated by the function $f \in A$ with $f(t)=t$ for $t \in [0,1]$ and $f(t)=it$ for $t \in [-1,0]$.
Then~$I$ is not self-adjoint, since $f^* \notin I$;
see \cite[Examples~II.5.2.1(i)]{Bla06OpAlgs}.
Thus, $I$ is not a Dixmier ideal.

Let $\varepsilon\geq 0$, and set
\[
J_\varepsilon := \big\{ g \in A : |g| \leq m|f|^{1+\varepsilon} \text{ for some $m\in\NN$} \big\}.
\]
Then $J_\varepsilon$ is a Dixmier ideal: indeed, its positive cone
$(J_\varepsilon)_+$ is readily seen to be an order ideal, and it
is automatically strongly invariant because $A$ is commutative.
Moreover, we have $J_\varepsilon \subseteq I \subseteq J_0$ for every $\varepsilon>0$.
\end{exa}

%==========================================================================================
The intersection of a family of strongly invariant order ideals is again a strongly invariant order ideal.
We may therefore speak of the strongly invariant order ideal generated by a positive element.

%==========================================================================================
\begin{prp}
\label{prp:LHD}
Let $A$ be a \ca, and let $a,b \in A_+$.
Then $a \lhd b$ if and only if $a$ belongs to the strongly invariant order ideal in $A_+$ generated by $b$.
\end{prp}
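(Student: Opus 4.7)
The plan is to show that the set $N_b := \{ a \in A_+ : a \lhd b \}$ coincides with the strongly invariant order ideal $M_b$ generated by $b$. Both inclusions will follow by unpacking the definitions, using the properties of $\lhd$ established in \autoref{prp:PropertiesRelations}. I do not expect any significant obstacle here: the work has already been done in the previous section, where $\lhd$ was shown to be an additive pre-order satisfying Riesz decomposition.

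For the forward direction ($a \lhd b \Rightarrow a \in M_b$), I would let $M$ be an arbitrary strongly invariant order ideal of $A_+$ containing $b$, and verify $a \in M$. Unpacking $a \lhd b$, there exist $m \in \NN$ and $a_j, b_j \in A_+$ for $j=1,\ldots,n$ with $a = \sum_j a_j$, $a_j \approx b_j$, and $\sum_j b_j \leq mb$. Since $M$ is an additive submonoid containing $b$, we have $mb \in M$; since $M$ is hereditary, each $b_j \in M$; since $M$ is strongly invariant and each pair $a_j \approx b_j$ is witnessed by some $x_j \in A$ with $a_j = x_j^*x_j$ and $b_j = x_jx_j^*$, we get each $a_j \in M$; and finally $a = \sum_j a_j \in M$. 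Since $M$ was arbitrary, $a \in M_b$.

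For the reverse direction ($a \in M_b \Rightarrow a \lhd b$), it suffices to show that $N_b$ is itself a strongly invariant order ideal containing $b$, for then $M_b \subseteq N_b$ by minimality. First, $b \in N_b$ since $b \approx b$ (via $x := b^{\frac{1}{2}}$) and $b \leq 1\cdot b$. Closure of $N_b$ under addition follows from additivity of $\lhd$ (\autoref{prp:PropertiesRelations}(4)): if $a_1, a_2 \lhd b$, then $a_1+a_2 \lhd 2b$, and $2b \lhd b$ trivially, so $a_1+a_2 \lhd b$. For the order-ideal (hereditary) property, if $0 \leq a \leq a'$ and $a' \lhd b$, then taking $c := a$ in the definition of $\lessapprox$ gives $a \approx a \leq a'$, hence $a \lessapprox a'$, so $a \lhd a' \lhd b$ by transitivity. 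For strong invariance, if $x \in A$ and $x^*x \in N_b$, the equivalence $xx^* \approx x^*x$ gives $xx^* \lessapprox x^*x \lhd b$, whence $xx^* \lhd b$, i.e. $xx^* \in N_b$.

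Combining the two inclusions yields $N_b = M_b$, which is exactly the desired equivalence. The only place where one might have expected technical difficulty is the verification that $N_b$ is strongly invariant, but this is immediate from the built-in symmetry of $\approx$ together with the transitivity of $\lhd$.
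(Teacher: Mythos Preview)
Your proof is correct and follows essentially the same approach as the paper: both arguments identify the set $\{c\in A_+ : c\lhd b\}$ with the strongly invariant order ideal generated by $b$ by showing one inclusion directly and the other by verifying that this set is itself a strongly invariant order ideal containing $b$. The only difference is cosmetic: you spell out the forward direction in full, while the paper merely remarks that it is easy to see, and for closure under addition the paper passes from $c_1+c_2\lhd 2b$ to $c_1+c_2\lhd b$ ``by definition of $\lhd$'' rather than via $2b\lhd b$ and transitivity.
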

\begin{proof}
Consider the set $M := \{c \in A_+ : c \lhd b\}$.
It is easy to see that every element in $M$ is contained in the strongly invariant order ideal generated by $b$.
To show the converse inclusion, it suffices to show that $M$ is a strongly invariant order ideal containing $b$.
	
Since $\lhd$ is reflexive, we have $b \in M$.
To see that $M$ is closed under addition, let $c_1,c_2 \in M$.
We get $c_1+c_2 \lhd 2b$ by \autoref{prp:PropertiesRelations}(4), and thus $c_1+c_2 \lhd b$ by definition of~$\lhd$.
Thus, $c_1+c_2 \in M$.
Using that $\leq$ and $\approx$ are stronger than $\lessapprox$ and $\lessapprox_+$, and hence than $\lhd$, we see that $M$ is a strongly invariant order ideal.
\end{proof}

%==========================================================================================
As a consequence of \autoref{prp:PowerLhd} we obtain:

%==========================================================================================
\begin{prp}
\label{prp:PowerOrderIdeal}
Let $A$ be a \ca, let $M \subseteq A_+$ be a strongly invariant order ideal, and let $s \in (0,\infty)$.
Then
\[
M^s := \big\{ a^s : a \in M \big\}
\]
is a strongly invariant order ideal.
\end{prp}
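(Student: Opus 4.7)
The approach is to characterize $M^s$ in terms of the relation $\lhd$ from \autoref{sec:RelationPositive}. Specifically, I will show that
\[
M^s = \bigl\{ c \in A_+ : c \lhd b^s \text{ for some } b \in M \bigr\}.
\]
The inclusion from left to right is immediate from reflexivity of $\lhd$. For the reverse inclusion, given $c \in A_+$ with $c \lhd b^s$ for some $b \in M$, I apply \autoref{prp:PowerLhd} with exponent $\frac{1}{s}$ to obtain $c^{\frac{1}{s}} \lhd b$. Then \autoref{prp:LHD} places $c^{\frac{1}{s}}$ in the strongly invariant order ideal generated by $b$, which is contained in $M$ since $M$ itself is such an ideal containing $b$; hence $c = (c^{\frac{1}{s}})^s \in M^s$.

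With this characterization in hand, the three defining properties of a strongly invariant order ideal follow through short arguments using the results of \autoref{sec:RelationPositive}. For closure under addition, I combine $c_1 \lhd b_1^s$ and $c_2 \lhd b_2^s$ via \autoref{prp:PropertiesRelations}(4) to get $c_1 + c_2 \lhd b_1^s + b_2^s$, then apply the inequality $b_1^s + b_2^s \lhd (b_1+b_2)^s$ from \autoref{prp:PowerSumLhd}; since $b_1+b_2 \in M$, this places $c_1+c_2$ in $M^s$. Hereditariness is immediate: $0 \leq c \leq d \lhd b^s$ forces $c \lhd b^s$. For strong invariance, taking $z = x$ in the definition of $\approx$ shows $xx^* \approx x^*x$ for every $x \in A$, so whenever $x^*x \lhd b^s$ one also has $xx^* \lhd b^s$, and conversely.

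The main technical inputs, namely the power-invariance \autoref{prp:PowerLhd} of $\lhd$, the subadditivity \autoref{prp:PowerSumLhd}, and the description \autoref{prp:LHD} of $\lhd$-descendants, have all been established earlier in the paper, so no serious obstacle remains. The subtlest point is the reverse inclusion in the characterization of $M^s$ above, where it is essential that \autoref{prp:PowerLhd} allows taking the $\frac{1}{s}$-th root of the $\lhd$-relation and that \autoref{prp:LHD} translates membership in the generated strongly invariant order ideal back into membership in $M$ itself.
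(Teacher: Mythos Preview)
Your proof is correct and follows essentially the same route as the paper: both establish the characterization $M^s = \{c \in A_+ : c \lhd b^s \text{ for some } b \in M\}$ using \autoref{prp:PowerLhd}, \autoref{prp:PowerSumLhd}, and \autoref{prp:LHD}. The only difference is packaging: the paper observes that this set is by construction the strongly invariant order ideal generated by $M^s$ and so is done once the inclusion $N \subseteq M^s$ is shown, whereas you verify the three defining properties by hand---but the underlying argument is the same.
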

\begin{proof}
Let $N$ denote the strongly invariant order ideal generated by $M^s$.
It follows from \autoref{prp:LHD} that
\[
N = \big\{ b \in A_+ : b \lhd a_1^s + \ldots + a_n^s \text{ for some } a_1, \ldots, a_n \in M \big\}.
\]
By \autoref{prp:PowerSumLhd}, we have $a_1^s + \ldots + a_n^s \lhd (a_1+\ldots+a_n)^s$ and thus
\[
N = \big\{ b \in A_+ : b \lhd a^s \text{ for some } a \in M \big\}.
\]

To show that $N \subseteq M^s$, let $b \in N$.
Choose $a \in M$ such that $b \lhd a^s$.
By \autoref{prp:PowerLhd}, we get $b^{\frac{1}{s}} \lhd a$. 
Since $M$ is a strongly invariant order ideal, this implies that $b^{\frac{1}{s}} \in M$, and thus $b \in M^s$.
\end{proof}

%==========================================================================================
Taking advantage of the natural correspondence between strongly invariant order ideals in $A_+$ and Dixmier ideals in $A$ described in \autoref{prp:DixmierVsStrInvOrderIdeal}, we next define powers $I^s$ of a Dixmier ideal $I$ for every exponent $s \in (0,\infty)$.
It follows from \autoref{prp:PowersDixmierIdeal} below that for every integer $n$, this `new' definition of $I^n$ agrees with the `ring theoretic' definition of $I^n$ as the ideal generated by the set of products $x_1 \cdot\ldots\cdot x_n$ for $x_1,\ldots,x_n \in I$.

%==========================================================================================
\begin{dfn}
\label{dfn:PowerDixmierIdeal}
Let $I$ be a Dixmier ideal in a \ca{}.
Given $s \in (0,\infty)$, we define the $s$-th power of $I$, or the $\frac{1}{s}$-th root of $I$, to be
\[
I^s := \linspan \big((I_+)^s\big) = \linspan\big(\big\{ a^s : a \in I_+ \big\}\big).
\]
\end{dfn}

%==========================================================================================
\begin{thm}
\label{prp:PowersDixmierIdeal}
Let $I$ be a Dixmier ideal in a \ca{} $A$.
Then:
\begin{enumerate}
\item
For every $s \in (0,\infty)$, $I^s$ is a Dixmier ideal with $(I^s)_+ = \{a^s : a\in I_+\}$.
\item
We have $(I^s)^t=I^{st}=(I^t)^s$ for all $s,t\in(0,\infty)$.
\item
We have $I^s I^t = I^{s+t}$ for all $s,t\in(0,\infty)$.
\end{enumerate}
In particular, we have $I^s \subseteq I^t$ whenever $s \geq t > 0$.
\end{thm}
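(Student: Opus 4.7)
The plan is to prove each of the three numbered claims in turn, with the bulk of the work in~(3). For~(1), the claim follows from the correspondence of \autoref{prp:DixmierVsStrInvOrderIdeal} together with \autoref{prp:PowerOrderIdeal}: since $I_+$ is strongly invariant, so is $(I_+)^s=\{a^s:a\in I_+\}$, and hence $I^s=\linspan((I_+)^s)$ is Dixmier with positive cone $(I_+)^s$. For~(2), both $(I^s)^t$ and $I^{st}$ are Dixmier by~(1) with positive cones both equal to $\{a^{st}:a\in I_+\}$; since Dixmier ideals are determined by their positive cones, the two ideals coincide, and symmetrically for $(I^t)^s$. The final assertion $I^s\subseteq I^t$ for $s\geq t>0$ is an immediate corollary of~(3): $I^s = I^{s-t}\cdot I^t \subseteq A\cdot I^t \subseteq I^t$.

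The inclusion $I^{s+t}\subseteq I^sI^t$ in~(3) is immediate from $a^{s+t}=a^s\cdot a^t$, so the heart of the proof is the reverse inclusion. By bilinearity and positive spanning, it reduces to showing $a^sb^t\in I^{s+t}$ for arbitrary $a,b\in I_+$. Setting $x:=a^sb^t$ and $c:=a+b\in I_+$, the first step is to establish the key claim $|x|\in(I^{s+t})_+$: by~(1) applied to $I^{s+t}$ combined with \autoref{prp:LHD} and \autoref{prp:PowerLhd}, this translates into the $\lhd$-inequality $b^ta^{2s}b^t\lhd c^{2(s+t)}$, whose commutative instance is the weighted AM--GM inequality $a^{2s}b^{2t}\leq(a+b)^{2(s+t)}$. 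The noncommutative version is carried out at the level of $\lhd$, using the equivalence $y^*y\approx yy^*$ applied to $y=a^sb^t$ (so that $b^ta^{2s}b^t\approx a^sb^{2t}a^s$), the operator bounds $b^ta^{2s}b^t\leq\|a\|^{2s}b^{2t}$ and $a^sb^{2t}a^s\leq\|b\|^{2t}a^{2s}$ together with $a,b\lhd c$, and the preservation of $\lhd$ under powers.

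With $|x|\in(I^{s+t})_+$ and symmetrically $|x^*|\in(I^{s+t})_+$ established, Polar Decomposition (\autoref{prp:PolarRiesz}(1)) writes $x=v|x|$ with $v\in A^{**}$ and $v|x|^\delta\in A$ for all $\delta\in(0,1)$. Using $|x|^{1+\varepsilon}\leq\|x\|^\varepsilon|x|\in(I^{s+t})_+$ (by hereditarity) together with the factorization $v|x|^{1+\varepsilon}=(v|x|^\delta)\cdot|x|^{1+\varepsilon-\delta}$ for $0<\delta<\varepsilon$, the element $x|x|^\varepsilon=v|x|^{1+\varepsilon}$ lies in $A\cdot I^{s+t}\subseteq I^{s+t}$ for every $\varepsilon>0$; combining this with the dual factorization coming from $|x^*|$ and the strong invariance of $I^{s+t}$ finally yields $x\in I^{s+t}$ itself. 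The main obstacles are the noncommutative AM--GM statement at the level of $\lhd$, and the delicate exponent-bookkeeping needed to pass from $|x|$ to $x$ without losing an $\varepsilon$ in the exponent; both hinge on the machinery of \autoref{sec:RelationPositive} and on polar decomposition in $A^{**}$.
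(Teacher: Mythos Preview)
Your treatment of~(1), (2), and the easy inclusion $I^{s+t}\subseteq I^sI^t$ matches the paper. The proposed argument for the reverse inclusion $I^sI^t\subseteq I^{s+t}$, however, has a genuine gap at the ``noncommutative AM--GM'' step.

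You assert $b^ta^{2s}b^t\lhd c^{2(s+t)}$ and list as ingredients the equivalence $b^ta^{2s}b^t\approx a^sb^{2t}a^s$, the bounds $b^ta^{2s}b^t\leq\|a\|^{2s}b^{2t}$ and $a^sb^{2t}a^s\leq\|b\|^{2t}a^{2s}$, the relations $a,b\lhd c$, and \autoref{prp:PowerLhd}. These ingredients yield $x^*x\lhd c^{2t}$ and $x^*x\lhd c^{2s}$ separately, but that does \emph{not} imply $x^*x\lhd c^{2(s+t)}$. Already commutatively this fails: in $C_0(0,1]$ with $c(\tau)=\tau$ and $s=t=1$, the element $p(\tau)=\tau^2$ satisfies $p\lhd c^{2s}=c^{2t}=\tau^2$ but $p\not\lhd c^{2(s+t)}=\tau^4$. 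The norm bounds discard the dependence on $a$ (respectively $b$) entirely, which is exactly the information needed to reach exponent $2(s+t)$. Of course the target inequality \emph{is} true (commutatively it is weighted AM--GM), but the listed tools do not prove it.

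There is a second difficulty: even granting $|x|\in(I^{s+t})_+$, your passage to $x\in I^{s+t}$ via polar decomposition only produces $x=(v|x|^\delta)\,|x|^{1-\delta}\in I^{(s+t)(1-\delta)}$, and the appeal to ``dual factorization and strong invariance'' does not explain how the lost $\delta$ is recovered without a closure argument. (This implication is in fact true, but the clean proof uses \autoref{prp:CharRootDixmierIdeal}, whose proof in turn relies on part~(3) of the present theorem.)

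The paper avoids both issues by working with the \emph{symmetric} product. It first shows $I^sI^rI^s\subseteq I^{2s+r}$ for $s\in(0,\tfrac12)$ and $r\in[0,1)$: polarization reduces to $a^sb^ra^s\in I^{2s+r}_+$ for $a,b\in I_+$, and since $r$ and $2s$ lie in $[0,1]$ one has genuine operator inequalities
\[
a^sb^ra^s\ \leq\ a^s(a+b)^ra^s\ \approx\ (a+b)^{r/2}a^{2s}(a+b)^{r/2}\ \leq\ (a+b)^{2s+r},
\]
placing a positive element directly in the hereditary cone (no polar-decomposition step needed). Then $I^sI^t\subseteq I^sI^{t-s}I^s\subseteq I^{s+t}$ for $s\leq t$ in $(0,\tfrac12)$, and the general case follows by rescaling through part~(2). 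The restriction to small exponents is what makes operator monotonicity available and replaces your $\lhd$-level AM--GM with an honest $\leq$/$\approx$ chain.
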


%==========================================================================================
Note that if $I$ is a Dixmier ideal and $s \in (0,\infty)$, then the positive part of $I^s$ agrees with the set $\{a^s : a \in I_+\}$ by (1), and it is unambiguous to write $I^s_+$.

%==========================================================================================
\begin{proof}
(1). 
It follows immediately from \autoref{prp:PowerOrderIdeal} and \autoref{prp:DixmierVsStrInvOrderIdeal} that~$I^s$ is a Dixmier ideal with $(I^s)_+=\{a^s\colon a\in I_+\}$.

To verify~(2), let $s,t \in (0,\infty)$.
Using~(1), we get
\[
((I^s)^t)_+
= \big\{ a^t : a \in (I^s)_+ \big\}
= \big\{ (a^s)^t : a \in I_+ \big\}
= \big\{ a^{st} : a \in I_+ \big\}
= (I^{st})_+.
\]
which implies that $(I^s)^t=I^{st}$, and then also $(I^t)^s=I^{st}$ by exchanging $s$ and $t$.

We are going to show (3). 
Without loss of generality, we may assume that $A$ is unital.
During the proof, we temporarily write $I^0:=A$, which is positively spanned by $I^0_+:=A_+$. 

\medskip

\textbf{Claim 1:} 
\emph{Given $s,t \in [0,\infty)$, we have $I^{s+t} \subseteq I^sI^t$.}
This is clear if $s$ or $t$ are zero.
To verify the claim for $s,t>0$, let $a \in I^{s+t}_+$.
By~(1), we have $b := a^{\frac{1}{s+t}} \in I_+$.
Then $b^s \in I^s$ and $b^t \in I^t$, and thus
\[
a = \big(a^{\frac{1}{s+t}}\big)^s\big(a^{\frac{1}{s+t}}\big)^t = b^sb^t \in I^s I^t.
\]
Using that $I^{s+t}$ is spanned by its positive elements, we get $I^{s+t} \subseteq I^sI^t$, which verifies the claim.

\medskip

\textbf{Claim 2:} 
\emph{Given $s \in (0,\frac{1}{2})$ and $r\in[0,1)$, we have $I^sI^rI^s \subseteq I^{2s+r}$.}
%We proceed similarly as in the proof of Claim~2.
By a variant of polarization, \cite[Proposition~II.3.1.9(i)]{Bla06OpAlgs}, we have 
\[
x^*by
= \sum_{k=0}^3 \tfrac{(-i)^k}{4} (x+i^ky)^*b(x+i^ky)
\]
for all $x,y \in I^s$ and $b \in I^r_+$.
Using that $I^{s}$ is self-adjoint, that $I^r$ is positively spanned, and that $I^{2s+r}$ is closed under scalar multiplication, it therefore suffices to show that $x^*bx \in I^{2s+r}$ for every $x \in I^s$ and $b \in I^r_+$.

Let $x \in I^{s}$ and $b \in I^r_+$.
Using that $I^s$ is positively spanned, we have $x = \sum_{k=0}^3 i^k a_k$ for suitable $a_k \in I^s_+$.
Using that $e^*bf+f^*be \leq e^*be+f^*bf$ for any $e,f \in A$, we get
\[
x^*bx
= \sum_{k,l=0}^3 (i^ka_k)^*b(i^la_l)
\leq 4 \sum_{k=0}^3 a_k^*ba_k.
\]

Since $I^{2s+r}_+\subseteq A$ is an order ideal, it therefore suffices to show that $aba \in I^{2s+r}$ for all $a \in I^s_+$ and $b \in I^r_+$.
Let $a_0,b_0 \in I_+$.
Using that $b_0 \leq a_0+b_0$, and that $t\mapsto t^r$ and $t \mapsto t^{2s}$ are operator monotone, we get
\begin{align*}
a_0^s b_0^r a_0^s
&\leq a_0^s (a_0+b_0)^r a_0^s
\approx (a_0+b_0)^{\frac{r}{2}}a_0^{2s}(a_0+b_0)^{\frac{r}{2}} \\
&\leq (a_0+b_0)^{\frac{r}{2}}(a_0+b_0)^{2s}(a_0+b_0)^{\frac{r}{2}}
= (a_0+b_0)^{2s+r}.
\end{align*}

We have $a_0+b_0 \in I_+$, and thus $(a_0+b_0)^{2s+r} \in I^{2s+r}_+$ by~(1).
Using that~$I^{2s+r}_+$ is an order ideal, we see that $aba \in I^{2s+r}$ for all $a \in I^s_+$ and $b \in I^r_+$.
This proves the claim.

\medskip

\textbf{Claim 3:} 
\emph{Given $s,t \in (0,\frac{1}{2})$, we have $I^sI^t \subseteq I^{s+t}$.}
If $s\leq t$, then set $r := t-s$.
Using Claim~1 at the second step, and Claim~2 at the third step, we have
\[
I^sI^t
= I^s I^{r+s}
\subseteq I^s I^r I^s
\subseteq I^{2s+r}
= I^{s+t}.
\]
The case $s>t$ is shown analogously.

\medskip

\textbf{Claim 4:} 
\emph{Given $s,t \in (0,\infty)$, we have $I^sI^t \subseteq I^{s+t}$.}
Choose $r\in (0,\infty)$ big enough such that $\frac{s}{r}, \frac{t}{r} < \frac{1}{2}$. 
Using~(2) at the first and last step, and using Claim~3 for the Dixmier ideal $I^r$ at the second step, we have 
\[
I^sI^t
=(I^r)^{\frac{s}{r}} (I^r)^{\frac{t}{r}}
\subseteq (I^{r})^{\frac{s}{r}+\frac{t}{r}}
= I^{s+t}.
\]
This proves the claim.

Combining Claim~1 and Claim~4, we have $I^s I^t = I^{s+t}$ for all $s,t\in(0,\infty)$.
\end{proof}

%==========================================================================================
\begin{prp}
\label{prp:CharRootDixmierIdeal}
Let $I$ be a Dixmier ideal in a \ca{} $A$.
Then
\[
I^{\frac{1}{2}} 
= \big\{ x \in A : x^*x \in I \big\}
= \big\{ x \in A : xx^* \in I \big\}.
\]
\end{prp}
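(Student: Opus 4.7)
The plan is to first handle the easy direction and then the substantive reverse inclusion. The equality $\{x \in A : x^*x \in I\} = \{x \in A : xx^* \in I\}$ is immediate from strong invariance of $I_+$: for any $x \in A$ we have $x^*x \in I$ iff $x^*x \in I_+$ iff $xx^* \in I_+$ iff $xx^* \in I$. For the forward inclusion $I^{1/2} \subseteq \{x \in A : x^*x \in I\}$, I would appeal directly to \autoref{prp:PowersDixmierIdeal}(3), which yields $I^{1/2} \cdot I^{1/2} = I^1 = I$ (with the ring-theoretic interpretation of the product). Since $I^{1/2}$ is positively spanned it is self-adjoint, so any $x \in I^{1/2}$ satisfies $x^* \in I^{1/2}$ and therefore $x^*x \in I^{1/2} \cdot I^{1/2} = I$.

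The reverse inclusion is where the real work lies. Given $x \in A$ with $x^*x \in I$, I would decompose $x = x_1 + ix_2$ into self-adjoint real and imaginary parts $x_1 = \tfrac{1}{2}(x + x^*)$ and $x_2 = \tfrac{1}{2i}(x - x^*)$. A direct computation gives the identity
\[
x_1^2 + x_2^2 = \tfrac{1}{2}(x^*x + xx^*),
\]
which lies in $I_+$ by strong invariance of $I_+$ together with the closure of $I$ under sums and positive scalars. Hereditariness of $I_+$ then forces $x_k^2 \in I_+$ for $k=1,2$. Writing $x_k = x_k^+ - x_k^-$ via continuous functional calculus, orthogonality of the positive and negative parts gives $x_k^2 = (x_k^+)^2 + (x_k^-)^2$, so $(x_k^\pm)^2 \leq x_k^2 \in I_+$, and hereditariness again forces $(x_k^\pm)^2 \in I_+$. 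Invoking the description $(I^{1/2})_+ = \{a^{1/2} : a \in I_+\}$ from \autoref{prp:PowersDixmierIdeal}(1), we obtain $x_k^\pm = ((x_k^\pm)^2)^{1/2} \in (I^{1/2})_+$, so $x_1, x_2 \in I^{1/2}$. Since $I^{1/2}$ is closed under scalar multiplication (being positively spanned), $x = x_1 + ix_2 \in I^{1/2}$.

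The main obstacle is precisely this reverse direction, since one has to pass from a quadratic condition $x^*x \in I$ to linear membership $x \in I^{1/2}$. The key step is the identity $x_1^2 + x_2^2 = \tfrac{1}{2}(x^*x + xx^*)$, which combines $x^*x$ with $xx^*$ on the right-hand side; it is exactly the strong invariance of $I_+$ (built into the definition of a Dixmier ideal) that allows one to conclude the sum lies in $I_+$. After that, the standard positive/negative part decomposition together with the explicit description of $(I^{1/2})_+$ finishes the argument with no further complications.
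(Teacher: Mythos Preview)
Your proposal is correct and follows essentially the same approach as the paper: both use strong invariance for the equality of the two right-hand sets, invoke $I^{1/2}I^{1/2}=I$ and self-adjointness of $I^{1/2}$ for the forward inclusion, and finish the reverse inclusion via the positive/negative part decomposition together with $(I^{1/2})_+=\{a^{1/2}:a\in I_+\}$. The only minor variation is in how you reduce to the self-adjoint case: the paper first shows $\{x:x^*x\in I\}$ is a linear subspace (via $(\lambda x+y)^*(\lambda x+y)\leq 2|\lambda|^2x^*x+2y^*y$ and hereditariness) and then treats self-adjoint elements directly, whereas you use the identity $x_1^2+x_2^2=\tfrac{1}{2}(x^*x+xx^*)$, which costs one extra appeal to strong invariance but avoids the separate linearity step.
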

\begin{proof}
Since $I_+$ is strongly invariant, we have
\[
J 
:= \big\{ x \in A : x^*x \in I \big\}
= \big\{ x \in A : xx^* \in I \big\}.
\]
In particular, an element $x \in A$ satisfies $x \in J$ if and only if $x^* \in J$.

To verify that $I^{\frac{1}{2}} \subseteq J$, let $x \in I^{\frac{1}{2}}$.
Using that~$I^{\frac{1}{2}}$ is Dixmier and thus self-adjoint, we have $x^* \in I^{\frac{1}{2}}$, and using \autoref{prp:PowersDixmierIdeal}~(3), we get
\[
x^*x 
\in I^{\frac{1}{2}}I^{\frac{1}{2}} 
= I.
\]
Thus $x$ belongs to $J$, as desired.

Next, we show that $J \subseteq I^{\frac{1}{2}}$.
To verify that $J$ is a linear subspace, let $x,y \in J$ and $\lambda\in\CC$.
Then
\[
(\lambda x + y)^*(\lambda x + y)
\leq 2|\lambda|^2x^*x + 2 y^*y \in I.
\]
Using that $I_+$ is an order ideal, we see that $\lambda x + y \in J$, as desired.
It therefore suffices to show that every self-adjoint element in $J$ is contained in~$I^{\frac{1}{2}}$.
Let $x \in J$ be self-adjoint.
Let $x_+ \in A_+$ and $x_- \in A_+$ be the positive and negative parts of $x$.
Then
\[
x_+^2 \leq x_+^2 + x_-^2 = x^*x \in I_+.
\]
Using that $I_+$ is an order ideal, it follows that $x_+^2 \in I_+$, and so $x_+ \in I^{\frac{1}{2}}_+$.
Analogously, we get $x_- \in I^{\frac{1}{2}}_+$, and in conclusion $x = x_+-x_- \in I^{\frac{1}{2}}$.
\end{proof}

%==========================================================================================
Next, we observe that a Dixmier ideal has a unique root among Dixmier ideals.
This is no longer true among all ideals:
\autoref{exa:NonDixmierRoot} below shows that there exist ideals that are not Dixmier, but that have a Dixmier square.

%==========================================================================================
\begin{cor}
\label{prp:UniqueDixmierRoot}
Let $I$ be a Dixmier ideal in a \ca{} $A$, let $n \in \NN$ with $n \geq 1$, and let $J$ be a Dixmier ideal such that $J^n = I$.
Then $J = I^{\frac{1}{n}}$.
\end{cor}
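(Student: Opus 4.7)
The plan is to reduce the equality of Dixmier ideals to an equality of their positive parts, and then to exploit the explicit description of positive parts of powers provided by \autoref{prp:PowersDixmierIdeal}~(1).

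More precisely, by \autoref{prp:DixmierVsStrInvOrderIdeal}, two Dixmier ideals in $A$ coincide if and only if their positive parts coincide. Thus it suffices to show that $J_+ = (I^{\frac{1}{n}})_+$. By \autoref{prp:PowersDixmierIdeal}~(1) applied to $J$ with exponent $n$, we have
\[
I_+ = (J^n)_+ = \big\{ b^n : b \in J_+ \big\},
\]
while the same proposition applied to $I$ with exponent $\frac{1}{n}$ gives
\[
(I^{\frac{1}{n}})_+ = \big\{ a^{\frac{1}{n}} : a \in I_+ \big\}.
\]

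The remainder of the argument is just to observe that the continuous functional calculus map $b \mapsto b^n$ is a bijection $A_+ \to A_+$ with inverse $a \mapsto a^{\frac{1}{n}}$. For the inclusion $(I^{\frac{1}{n}})_+ \subseteq J_+$, take $a \in I_+$; by the first displayed equation there is $b \in J_+$ with $a = b^n$, so $a^{\frac{1}{n}} = b \in J_+$. Conversely, for $J_+ \subseteq (I^{\frac{1}{n}})_+$, take $b \in J_+$; then $b^n \in I_+$ by the first displayed equation, and hence $b = (b^n)^{\frac{1}{n}} \in (I^{\frac{1}{n}})_+$.

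There is no serious obstacle here: all the work has already been done in establishing \autoref{prp:PowersDixmierIdeal}~(1), which guarantees that the positive part of a Dixmier ideal raised to a power is exactly what one would naively hope for. The only point that deserves a brief check is that $(J^n)_+ = \{b^n : b \in J_+\}$ really refers to the Dixmier-ideal $n$-th power defined in \autoref{dfn:PowerDixmierIdeal}, rather than the ring-theoretic $n$-th power; but these coincide by the final statement of \autoref{prp:PowersDixmierIdeal}~(3) together with the assumption that $J$ is Dixmier, so the hypothesis $J^n = I$ is unambiguous.
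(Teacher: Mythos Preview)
Your argument is correct. You reduce to positive parts via \autoref{prp:DixmierVsStrInvOrderIdeal} and then use \autoref{prp:PowersDixmierIdeal}~(1) to identify $(J^n)_+$ and $(I^{\frac{1}{n}})_+$ explicitly, matching them through the bijection $b\mapsto b^n$ on $A_+$. The closing remark about the ambiguity of $J^n$ is also well taken.

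The paper, however, bypasses all of this by invoking \autoref{prp:PowersDixmierIdeal}~(2) directly: from $(I^s)^t=I^{st}$ one gets
\[
I^{\frac{1}{n}} = (J^n)^{\frac{1}{n}} = J^{\,n\cdot\frac{1}{n}} = J
\]
in one line. In effect, your proof reproves the special case of part~(2) needed here from part~(1), whereas the paper simply cites~(2). Both are valid; yours is more self-contained at the level of positive cones, while the paper's is shorter and exploits the exponent law already established.
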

\begin{proof}
Using \autoref{prp:PowersDixmierIdeal}~(2), we have
$I^{\frac{1}{n}}
= (J^n)^{\frac{1}{n}}
= J$, 
as desired.
\end{proof}

%==========================================================================================
\begin{exa}
\label{exa:NonDixmierRoot}
Consider the \ca{} $C_0(\NN,M_2(\CC))$ of sequences of $2$-by-$2$ matrices $\begin{psmallmatrix} a_n & b_n \\ c_n & d_n \end{psmallmatrix}$ such that $(a_n)_{n\in\NN}$, $(b_n)_{n\in\NN}$, $(c_n)_{n\in\NN}$ and $(d_n)_{n\in\NN}$ converge to zero.
Let~$I$ be the ideal of such matrices with $|a_n|,|b_n|,|c_n| = O(\tfrac{1}{n})$ and $|d_n| = o(\tfrac{1}{n})$.
Then $I$ is a self-adjoint ideal that is not Dixmier:
for the sequence $x=(x_n)_{n\in\NN}$ with $x_n=\begin{psmallmatrix} 0 & 0 \\ n^{-\frac{1}{2}} & 0 \end{psmallmatrix}$, we have $x^*x \in I$ but $xx^* \notin I$.
We refer to \cite[Proposition~1.3]{Ped69MsrThyCAlg34} for the original version of this example; see also \cite[Examples~II.5.2.1(vi)]{Bla06OpAlgs}.

Set $J:=I^2$, which is the ideal consisting of sequences of matrices $\begin{psmallmatrix} a_n & b_n \\ c_n & d_n \end{psmallmatrix}$ with  $|a_n|,|b_n|,|c_n|,|d_n| = O(\tfrac{1}{n^2})$.
Then $J$ is a Dixmier ideal, whose (Dixmier) square-root $J^{\frac{1}{2}}$ is the ideal consisting of sequences of matrices with  $|a_n|,|b_n|,|c_n|,|d_n| = O(\tfrac{1}{n})$.
Then $(J^{\frac{1}{2}})^2 = J = I^2$, but $J^{\frac{1}{2}} \neq I$.
\end{exa}

%==========================================================================================
%==========================================================================================
\section{The lattice of Dixmier ideals}
\label{sec:Lattice}

%==========================================================================================
We observe that the Dixmier ideals in a \ca{} $A$ form a complete sublattice of the lattice of ideals in $A$;
see \autoref{prp:LatticeDixmierIdeals}.
It follows that every ideal $I$ is contained in a smallest Dixmier ideal -- the `Dixmier closure' $\exterior(I)$ of $I$ -- and contains a largest Dixmier ideal -- the `Dixmier interior' $\interior(I)$ of $I$.
We show that the inclusions $\interior(I)\subseteq I\subseteq \exterior(I)$ are very tight in a suitable sense;
see \autoref{prp:IntExt}.

\medskip

%==========================================================================================
It is well-known that the family of (not necessarily norm-closed) ideals in a \ca{} forms a complete lattice.
The infimum of a family $(I_\lambda)_{\lambda\in\Lambda}$ of ideals is their intersection, $\bigwedge_{\lambda\in\Lambda} I_\lambda = \bigcap_{\lambda\in\Lambda} I_\lambda$, and the supremum is $\bigvee_{\lambda\in\Lambda} I_\lambda = \sum_{\lambda\in\Lambda} I_\lambda$, where $\sum_{\lambda\in\Lambda} I_\lambda$ is defined as the set of finite sums of elements in $\bigcup_{\lambda\in\Lambda} I_\lambda$.

%==========================================================================================
\begin{prp}
\label{prp:LatticeDixmierIdeals}
The Dixmier ideals of a \ca{} $A$ form a complete sublattice of the ideal lattice of $A$.
Further, given a family $(I_\lambda)_{\lambda \in \Lambda}$ of Dixmier ideals, we have
\[
\Big( \bigcap_{\lambda\in\Lambda} I_\lambda \Big)_+ 
= \bigcap_{\lambda\in\Lambda} \big(I_\lambda\big)_+, \andSep
\Big( \sum_{\lambda\in\Lambda} I_\lambda \Big)_+ 
= \sum_{\lambda\in\Lambda} \big(I_\lambda)_+.
\]

Given also $s \in (0,\infty)$, we have
\[
\Big( \bigcap_{\lambda\in\Lambda} I_\lambda \Big)^s 
= \bigcap_{\lambda\in\Lambda} I_\lambda^s, \andSep
\Big( \sum_{\lambda\in\Lambda} I_\lambda \Big)^s
= \sum_{\lambda\in\Lambda} I_\lambda^s.
\]
\end{prp}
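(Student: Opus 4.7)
The plan is to work via the correspondence in \autoref{prp:DixmierVsStrInvOrderIdeal} between Dixmier ideals in $A$ and strongly invariant order ideals in $A_+$, showing that intersections and sums of strongly invariant order ideals are again of this type. The power identities will then follow quickly from \autoref{prp:PowersDixmierIdeal} and \autoref{prp:PowerSumLhd}.

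For the intersection $J := \bigcap_\lambda I_\lambda$, self-adjointness of each $I_\lambda$ passes to $J$. Given self-adjoint $y \in J$ and $\lambda \in \Lambda$, since $I_\lambda$ is positively spanned we write $y = \sum_k c_k a_k$ with $a_k \in (I_\lambda)_+$; then $y_\pm \leq |y| \leq \sum_k |c_k| a_k \in (I_\lambda)_+$, and heredity of $I_\lambda$ gives $y_\pm \in (I_\lambda)_+$. Since this holds for every $\lambda$, we obtain $y_\pm \in \bigcap_\lambda (I_\lambda)_+$, and splitting a general $y \in J$ into real and imaginary parts shows that $J$ is positively spanned with $J_+ = \bigcap_\lambda (I_\lambda)_+$. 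Heredity and strong invariance of $J_+$ are inherited directly from the $(I_\lambda)_+$.

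For the sum, let $M := \sum_\lambda (I_\lambda)_+$ be the set of all finite sums of positive elements from the $I_\lambda$. Heredity of $M$ will follow from iterated Riesz Decomposition for $\lessapprox$ (\autoref{prp:PropertiesRelations}(5)): if $a \in A_+$ satisfies $a \leq b_1 + \cdots + b_n$ with $b_j \in (I_{\lambda_j})_+$, we obtain $a = a_1 + \cdots + a_n$ with $a_j \lessapprox b_j$, and heredity together with strong invariance of each $(I_{\lambda_j})_+$ places each $a_j$ in $(I_{\lambda_j})_+$. Strong invariance is the crux: given $x \in A$ with $x^*x = b_1 + \cdots + b_n \in M$, we rewrite the identity as $(x^*)(x^*)^* = \sum_j (b_j^{1/2})^* b_j^{1/2}$ and invoke Riesz Refinement (\autoref{prp:PolarRiesz}(3)) with $m = 1$ to produce $z_j \in A$ satisfying $xx^* = \sum_j z_j^* z_j$ and $z_j z_j^* = b_j \in (I_{\lambda_j})_+$; strong invariance of each $(I_{\lambda_j})_+$ then places $z_j^* z_j$ in $(I_{\lambda_j})_+$, whence $xx^* \in M$. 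By \autoref{prp:DixmierVsStrInvOrderIdeal}, $\linspan(M)$ is Dixmier with positive part $M$, and since each $I_\lambda = \linspan((I_\lambda)_+)$ one checks $\linspan(M) = \sum_\lambda I_\lambda$.

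For the power identities: in the intersection case, $b \in A_+$ lies in $(\bigcap_\lambda I_\lambda)^s$ iff $b^{1/s} \in \bigcap_\lambda (I_\lambda)_+$ iff $b \in (I_\lambda^s)_+$ for every $\lambda$ iff $b \in \bigcap_\lambda I_\lambda^s$, using the description of positive parts in \autoref{prp:PowersDixmierIdeal}(1); since both sides are Dixmier, agreement of positive parts suffices. For sums, both sides are Dixmier by the previous paragraph, so again it is enough to compare positive parts: given $a = a_1 + \cdots + a_n$ with $a_j \in (I_{\lambda_j})_+$, \autoref{prp:PowerSumLhd} supplies both $a^s \lhd \sum_j a_j^s$ and $\sum_j a_j^s \lhd a^s$, which via \autoref{prp:LHD} yields the required two-sided inclusion of strongly invariant order ideals. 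The main obstacle I anticipate is the strong invariance argument for sums, as this is the one step where the purely order-theoretic toolkit is insufficient and the Riesz Refinement becomes indispensable; every other step is a direct application of the machinery already established in the preceding sections.
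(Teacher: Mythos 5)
Your treatment of the sum and of both power identities is correct and follows essentially the paper's route: for $M:=\sum_\lambda (I_\lambda)_+$ you obtain heredity from Riesz decomposition for $\lessapprox$ (\autoref{prp:PropertiesRelations}) and strong invariance from Riesz Refinement (\autoref{prp:PolarRiesz}(3)) applied to $x^*x=\sum_k b_k$; the paper instead quotes \cite[Proposition~1.5]{Ped69MsrThyCAlg34} (or the $\lhd$-machinery of \autoref{prp:LHD} together with \autoref{prp:PropertiesRelations}(5)), which is the same substance. Likewise, your derivation of $\bigl(\sum_\lambda I_\lambda\bigr)^s=\sum_\lambda I_\lambda^s$ from \autoref{prp:PowerSumLhd} and \autoref{prp:LHD}, and of the intersection formula from the description of positive parts in \autoref{prp:PowersDixmierIdeal}(1), is exactly what the paper does.

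The genuine gap is in the intersection case, at the one point of that case that has real content, namely that $\bigcap_\lambda I_\lambda$ is positively spanned. Your chain $y_\pm\leq |y|\leq \sum_k|c_k|a_k$ is invalid: from $\pm y\leq b$ with $b\geq 0$ one cannot conclude $|y|\leq b$, nor even $y_+\leq b$, in a noncommutative \ca{}. Concretely, in $M_2(\CC)$ take $y=\mathrm{diag}(1,-1)$ and $b=\begin{psmallmatrix} 11/10 & 1/2 \\ 1/2 & 2 \end{psmallmatrix}$; then $b\pm y\geq 0$, so with $a_1:=\tfrac12(b+y)\geq 0$ and $a_2:=\tfrac12(b-y)\geq 0$ we have $y=a_1-a_2$ and $\sum_k|c_k|a_k=b$, yet $\det(b-y_+)=-\tfrac{1}{20}<0$, so $y_+\not\leq b$ (and $|y|=1\not\leq b$). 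Hence heredity of $(I_\lambda)_+$ cannot be invoked this way, and your argument does not establish $y_\pm\in(I_\lambda)_+$. The fact you need is true, but in this framework it comes from the power machinery rather than an order estimate: for self-adjoint $y\in I_\lambda$ we have $y^2=y\cdot y\in I_\lambda I_\lambda=I_\lambda^2$ and $y^2\geq 0$, so $y^2\in (I_\lambda^2)_+=\{a^2:a\in (I_\lambda)_+\}$ by \autoref{prp:PowersDixmierIdeal}; by uniqueness of positive square roots $|y|\in (I_\lambda)_+$, and therefore $y_\pm=\tfrac12(|y|\pm y)\in (I_\lambda)_+$ (equivalently, apply \autoref{prp:CharRootDixmierIdeal} to $I_\lambda^2$ and use $(I_\lambda^2)^{\frac12}=I_\lambda$). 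Running this for every $\lambda$ repairs your intersection argument, and with it the power-of-intersection identity, which depends on it; the paper itself passes over this point in a single sentence, but the repair above uses only results established before \autoref{prp:LatticeDixmierIdeals}.
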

\begin{proof}
Using that the intersection of a family of strongly invariant order ideals in~$A_+$ is again a strongly invariant order ideal, it follows that $\bigcap_{\lambda\in\Lambda} I_\lambda$ is again a Dixmier ideal with positive part $\bigcap_{\lambda\in\Lambda} (I_\lambda)_+$.

Next, we consider the supremum.
By \cite[Proposition~1.5]{Ped69MsrThyCAlg34}, $M+N$ is a strongly invariant order ideal whenever $M,N \subseteq A_+$ are.
This also follows from \autoref{prp:LHD} and the Riesz decomposition property for the relation $\lhd$ by \autoref{prp:PropertiesRelations}. 
Therefore, given a finite subset $\Lambda' \subseteq \Lambda$, the sum $\sum_{\lambda \in \Lambda'} I_\lambda$ is a Dixmier ideal with positive part $\sum_{\lambda \in \Lambda'} (I_\lambda)_+$.
Using that $\sum_{\lambda \in \Lambda} I_\lambda$ is the union of the directed family of Dixmier ideals $\sum_{\lambda \in \Lambda'} I_\lambda$ indexed over the finite subsets $\Lambda'$ of $\Lambda$, it follows that $\sum_{\lambda \in \Lambda} I_\lambda$ is a Dixmier ideal with positive part $\sum_{\lambda\in\Lambda} (I_\lambda)_+$.

To show compatibility with taking powers, let $s \in (0,\infty)$.
Using the first part of the argument and \autoref{prp:PowersDixmierIdeal}~(1), we have
\[
\Big( \sum_{\lambda\in\Lambda} I_\lambda \Big)^s_+
= \biggl\{ a^s : a \in \Big( \sum_{\lambda\in\Lambda} I_\lambda \Big)_+ \biggr\}
= \biggl\{ a^s : a \in \sum_{\lambda\in\Lambda} \big(I_\lambda\big)_+ \biggr\}
= \sum_{\lambda\in\Lambda} \big(I_\lambda^s\big)_+,
\]
where for the last step we have used the fact that for any finite subset $\Lambda'\subseteq \Lambda$ and $a_\lambda\in (I_\lambda)_+$ for $\lambda\in\Lambda'$, it holds $\sum_{\lambda\in\Lambda'}a_\lambda^s \lhd \bigl(\sum_{\lambda\in\Lambda'}a_\lambda\bigr)^s$ and $\bigl(\sum_{\lambda\in\Lambda'}a_\lambda\bigr)^s \lhd \sum_{\lambda\in\Lambda'}a_\lambda^s$ by \autoref{prp:PowerSumLhd}. 
It follows that $\big( \sum_{\lambda\in\Lambda} I_\lambda \big)^s = \sum_{\lambda\in\Lambda} I_\lambda^s$.
The compatibility of infima with taking powers follows analogously.
\end{proof}

%==========================================================================================
\begin{dfn}
\label{dfn:DixmierIntExt}
Given an ideal $I \subseteq A$ in a \ca{}, its \emph{Dixmier interior} is
\[
\interior(I) := \bigvee \big\{ J \subseteq A : J \text{ Dixmier ideal}, J \subseteq I \big\},
\]
and its \emph{Dixmier closure} is
\[
\exterior(I) := \bigwedge \big\{ J \subseteq A : J \text{ Dixmier ideal}, I \subseteq J \big\}.
\]
\end{dfn}

%==========================================================================================
\begin{lma}
\label{prp:ApproxDominated}
Let $I\subseteq A$ be an ideal in a \ca\ $A$, and let $a\in A_+$ be such that there exists $x \in I$ and $\varepsilon>0$ with $a \lhd |x|^{1+\varepsilon}$.
Then $\lambda a \in I$ for all $\lambda \in \CC$.
\end{lma}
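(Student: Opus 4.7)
The plan is to express every summand in a witness to $a \lhd |x|^{1+\varepsilon}$ as a product $u \cdot p \cdot v$ with $u,v \in A$ and $p \in I$. Once this is achieved, $\lambda a \in I$ for every $\lambda \in \CC$ will follow, since $\lambda u \in A$ and $I$ is a two-sided ideal, hence $(\lambda u)\cdot p\cdot v\in I$.

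By definition of $\lhd$, I can fix $m \in \NN$ and $y_1,\dots,y_n \in A$ with $a = \sum_j y_j^* y_j$ and $\sum_j y_j y_j^* \le m |x|^{1+\varepsilon}$; in particular $y_j y_j^* \le m |x|^{1+\varepsilon}$ for each $j$. Applying Polar Decomposition~II (\autoref{prp:PolarRiesz}(2)) to $y_j^*$ yields a contraction $w_j \in A^{**}$ with $y_j^* = m^{1/2}\, w_j |x|^{(1+\varepsilon)/2}$ and $w_j z \in A$ for every $z \in \overline{|x|A}$; in particular $w_j |x|^s \in A$ for every $s > 0$. A direct computation then gives $y_j^* y_j = m\, w_j |x|^{1+\varepsilon} w_j^*$.

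The crux is to show that $|x|^{1+\eta} \in I$ for every $\eta > 0$. Taking the polar decomposition $x = v|x|$ in $A^{**}$ from Polar Decomposition~I (\autoref{prp:PolarRiesz}(1)), both $v|x|^{\eta/2}$ and $x|x|^{\eta/2} = v|x|^{1+\eta/2}$ lie in $A$, and $x|x|^{\eta/2} \in I$ because $x \in I$ and $|x|^{\eta/2} \in A$. Using that $v^*v$ is the support projection of $|x|$ and hence acts as the identity on $|x|^s$ for every $s > 0$, one computes
\[
(v|x|^{\eta/2})^* \cdot (x|x|^{\eta/2}) \;=\; |x|^{\eta/2}\, v^*v\, |x|^{1+\eta/2} \;=\; |x|^{1+\eta},
\]
which exhibits $|x|^{1+\eta}$ as an element of $A \cdot I \subseteq I$.

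To finish, I pick $\gamma \in (0,\varepsilon/2)$, set $\eta := \varepsilon - 2\gamma > 0$, and let $e_j := w_j |x|^\gamma \in A$. Then
\[
y_j^* y_j \;=\; m\, w_j |x|^{1+\varepsilon} w_j^* \;=\; m\, e_j \cdot |x|^{1+\eta} \cdot e_j^*,
\]
so $\lambda\, y_j^* y_j = m\, (\lambda e_j) \cdot |x|^{1+\eta} \cdot e_j^* \in I$ for every $\lambda \in \CC$, and summing over $j$ gives $\lambda a \in I$. The main obstacle is the intermediate claim $|x|^{1+\eta} \in I$: the strict positivity of $\eta$ is essential, because $|x|$ itself need not lie in a non-self-adjoint ideal~$I$ containing $x$, and the little bit of extra exponent is exactly what allows the polar decomposition of $x$ to absorb the mismatch into $A$-factors while leaving an element of $I$ in the middle.
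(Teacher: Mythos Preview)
Your proof is correct and follows essentially the same strategy as the paper's: first establish $|x|^{1+\eta}\in I$ via Polar Decomposition~I, then use Polar Decomposition~II to factor each summand as an $A$-element times an $I$-element times an $A$-element, into which the scalar $\lambda$ can be absorbed. The only differences are organizational: the paper first reduces from $\lhd$ to the single-piece relation $\lessapprox$ (using $m|x|^{1+\varepsilon}\leq |mx|^{1+\varepsilon}$) and splits the exponent multiplicatively as $1+\varepsilon=(1+\varepsilon')(1+\varepsilon'')$, whereas you treat the full sum directly and split the exponent additively as $(1+\varepsilon)=\gamma+(1+\eta)+\gamma$.
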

\begin{proof}
We divide the proof into several parts.
\vspace{.1cm}

\textbf{Claim 1:} 
\emph{Given $x \in I$ and $\varepsilon \in (0,1)$, we have $|x|^{1+\varepsilon} \in I$.}
To prove the claim, we apply Polar Decomposition (\autoref{prp:PolarRiesz}~(1)) to obtain a partial isometry $v \in A^{**}$ such that $x = v|x|$, and such that $u:=v|x|^\varepsilon$ belongs to $A$.
Since $v^*v$ is the right support projection of $x$, we get
\[
x = u|x|^{1-\varepsilon}, \andSep 
u^*u 
= |x|^\varepsilon v^*v |x|^\varepsilon
= |x|^{2\varepsilon}.
\]

Then 
\[
|x|^{1+\varepsilon} 
= |x|^{2\varepsilon}|x|^{1-\varepsilon} 
= u^*u |x|^{1-\varepsilon} 
= u^*x,
\]
and since $I$ is an ideal, we get $|x|^{1+\varepsilon} \in I$.
This proves the claim.

\medskip

\textbf{Claim 2:} 
\emph{Given $\lambda \in \CC$, $a \in A_+$, $x \in I$ and $\varepsilon > 0$ with $a \lessapprox |x|^{1+\varepsilon}$, we have $\lambda a \in I$.}
To prove the claim, pick $\varepsilon'>0$ and $\varepsilon''\in(0,1)$ such that $1+\varepsilon = (1+\varepsilon')(1+\varepsilon'')$, and set $b := |x|^{1+\varepsilon''}$.
By Claim~1, we have $b \in I_+$.
Further, we have $a \lessapprox |x|^{1+\varepsilon} = b^{1+\varepsilon'}$.

Choose $y \in A$ such that
\[
a=yy^*, \andSep 
y^*y \leq b^{1+\varepsilon'}.
\]
Using Polar Decomposition (\autoref{prp:PolarRiesz}~(2)), we obtain $w \in A^{**}$ such that $y = wb^{\frac{1+\varepsilon'}{2}}$, and such that $z := wb^{\frac{\varepsilon'}{2}}$ belongs to $A$.
Then $y = zb^{\frac{1}{2}}$ and thus
\[
\lambda a 
= \lambda yy^* 
= \lambda zb^{\frac{1}{2}}(zb^{\frac{1}{2}})^* 
= (\lambda z)bz^*,
\]
and since $I$ is an ideal, we get $\lambda a \in I$, as desired.

\medskip

To prove the statement, let $\lambda \in \CC$, $a \in A_+$, $x \in I$ and $\varepsilon>0$ such that $a \lhd |x|^{1+\varepsilon}$.
By definition, we obtain $n,m\in\NN$ with $n \geq 1$, and $a_1,\ldots,a_n, b_1,\ldots,b_n \in A_+$ such that
\[
a = \sum_{j=1}^n a_j, \quad
a_j \approx b_j \quad \text{for $j=1,\ldots,n$}, \andSep
\sum_{j=1}^nb_j \leq m|x|^{1+\varepsilon}.
\]

For each $j$, we have
\[
a_j \approx b_j \leq m|x|^{1+\varepsilon} \leq |mx|^{1+\varepsilon}.
\]
Using that $mx \in I$, we can apply Claim~2 to deduce that $\lambda a_j \in I$.
It follows that $\lambda a = \sum_{j=1}^n \lambda a_j \in I$, as desired.
\end{proof}

%==========================================================================================
\begin{thm}
\label{prp:IntExt}
Let $I \subseteq A$ be an ideal in a \ca\ $A$.
Then $|x|^{1+\varepsilon} \in \interior(I)$ for all $x \in I$ and $\varepsilon>0$.
Further, we have
\[
\exterior(I)^{1+\varepsilon} 
\subseteq \interior(I)
\subseteq I 
\subseteq \exterior(I)
\subseteq \interior(I)^{1-\delta}
\]
for all $\varepsilon>0$ and $\delta\in (0,1)$.
\end{thm}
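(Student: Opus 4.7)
The proof splits into two main tasks. First, I would establish the auxiliary claim that $|x|^{1+\varepsilon} \in \interior(I)$ for every $x \in I$ and $\varepsilon > 0$; the real work is already packaged in \autoref{prp:ApproxDominated}. The displayed chain of inclusions then follows formally from this, using the power calculus for Dixmier ideals developed in \autoref{prp:PowersDixmierIdeal} and \autoref{prp:CharRootDixmierIdeal}.

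For the auxiliary claim, fix $x \in I$ and $\varepsilon > 0$, and let $J$ denote the Dixmier ideal attached (via \autoref{prp:DixmierVsStrInvOrderIdeal}) to the strongly invariant order ideal generated by $|x|^{1+\varepsilon}$. By \autoref{prp:LHD}, $J_+ = \{a \in A_+ : a \lhd |x|^{1+\varepsilon}\}$. For any $a \in J_+$ and $\lambda \in \CC$, \autoref{prp:ApproxDominated} gives $\lambda a \in I$, so $J = \linspan(J_+) \subseteq I$. Since $J$ is Dixmier and contained in $I$, we conclude $J \subseteq \interior(I)$, and in particular $|x|^{1+\varepsilon} \in \interior(I)$.

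The inclusions $\interior(I) \subseteq I \subseteq \exterior(I)$ hold by definition. For $\exterior(I)^{1+\varepsilon} \subseteq \interior(I)$, the key general fact is that for any Dixmier ideal $L$ and any $y \in A$, one has $y \in L$ iff $|y| \in L_+$: applying \autoref{prp:CharRootDixmierIdeal} to $L^2$ yields $y \in L = (L^2)^{1/2}$ iff $y^*y \in L^2$, and by \autoref{prp:PowersDixmierIdeal}(1) the latter is equivalent to $|y| = (y^*y)^{1/2} \in L_+$. I would apply this with $L := \interior(I)^{1/(1+\varepsilon)}$: for each $x \in I$, the auxiliary claim gives $|x|^{1+\varepsilon} \in \interior(I)_+$, which by \autoref{prp:PowersDixmierIdeal}(1) means $|x| \in L_+$, hence $x \in L$. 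Thus $I \subseteq \interior(I)^{1/(1+\varepsilon)}$, so $\exterior(I) \subseteq \interior(I)^{1/(1+\varepsilon)}$, and raising both sides to the $(1+\varepsilon)$-th power (monotonicity is immediate from \autoref{prp:PowersDixmierIdeal}(1)) and invoking \autoref{prp:PowersDixmierIdeal}(2) yields $\exterior(I)^{1+\varepsilon} \subseteq (\interior(I)^{1/(1+\varepsilon)})^{1+\varepsilon} = \interior(I)$.

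For the remaining inclusion $\exterior(I) \subseteq \interior(I)^{1-\delta}$ with $\delta \in (0,1)$, I would set $\varepsilon := \delta/(1-\delta)$ so that $(1+\varepsilon)(1-\delta) = 1$, and take $(1-\delta)$-th powers of $\exterior(I)^{1+\varepsilon} \subseteq \interior(I)$; using \autoref{prp:PowersDixmierIdeal}(2), the left side collapses to $\exterior(I)^{(1+\varepsilon)(1-\delta)} = \exterior(I)$, giving the claim. The only substantial obstacle is the auxiliary claim, which is already handled by the polar-decomposition argument in \autoref{prp:ApproxDominated}; everything else is formal manipulation with Dixmier powers.
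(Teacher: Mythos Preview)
Your proof is correct. The auxiliary claim $|x|^{1+\varepsilon}\in\interior(I)$ and the reduction of $\exterior(I)\subseteq\interior(I)^{1-\delta}$ to the previous inclusion are carried out exactly as in the paper. The one genuine difference is in how you obtain $\exterior(I)^{1+\varepsilon}\subseteq\interior(I)$. The paper asserts the explicit description $\exterior(I)_+=\{a\in A_+:a\lhd b\text{ for some }b\in I_+\}$ and then invokes \autoref{prp:PowerLhd}; you instead prove $I\subseteq\interior(I)^{1/(1+\varepsilon)}$ directly via your ``key fact'' ($y\in L\Leftrightarrow|y|\in L_+$ for a Dixmier ideal $L$, from \autoref{prp:CharRootDixmierIdeal} and \autoref{prp:PowersDixmierIdeal}) and then use the minimality of $\exterior(I)$ together with the power calculus. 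Your route is tidier and in fact more robust: the paper's displayed formula for $\exterior(I)_+$ is not literally correct when $I$ fails to be positively spanned---in \autoref{exa:NotDixmier} one has $|f|\in\exterior(I)_+$, yet every $b\in I_+$ is $o(|t|)$ at the origin, so $|f|\lhd b$ fails for all $b\in I_+$; the correct generating set for $\exterior(I)_+$ is $\{|x|:x\in I\}$ rather than $I_+$, after which the paper's argument goes through. Your approach, using only the universal property of $\exterior(I)$, sidesteps this subtlety entirely.
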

\begin{proof}
Let $x \in I$ and $\varepsilon>0$.
Set
\[
M := \big\{ a \in A_+ : a \lhd |x|^{1+\varepsilon} \big\},
\]
which by \autoref{prp:LHD} is the smallest strongly invariant order ideal of~$A_+$ containing~$|x|^{1+\varepsilon}$.
Thus the linear span $\linspan(M)$ of $M$ is a Dixmier ideal.
It follows from \autoref{prp:ApproxDominated} that $\linspan(M)$ is contained in $I$, whence
\[
|x|^{1+\varepsilon} 
\in M 
\subseteq \linspan(M) 
\subseteq \interior(I).
\]

Let us now prove the second part of the statement.
It is clear that $\interior(I) \subseteq I \subseteq \exterior(I)$.
We have
\[
\exterior(I)_+ = \big\{ a \in A_+ : a \lhd b \text{ for some $b \in I_+$} \big\}.
\]

Let $\varepsilon>0$.
By \autoref{prp:PowerLhd}, two elements $a,b \in A_+$ satisfy $a \lhd b$ if and only if $a^{1+\varepsilon} \lhd b^{1+\varepsilon}$.
Using this at the first step, and using the first part of the statement at the second step, we get
\[
\exterior(I)^{1+\varepsilon}_+ 
= \big\{ a \in A_+ : a \lhd b^{1+\varepsilon} \text{ for some $b \in I_+$} \big\}
\subseteq \interior(I)_+.
\]
It follows that $\exterior(I)^{1+\varepsilon} \subseteq \interior(I)$.

Given $\delta\in(0,1)$, to verify that $\exterior(I) \subseteq \interior(I)^{1-\delta}$, set $\varepsilon = (1-\delta)^{-1}-1$.
Then $\varepsilon>0$ and $1=(1-\delta)(1+\varepsilon)$.
By the above argument, we have $\exterior(I)^{1+\varepsilon} \subseteq \interior(I)$.
Applying \autoref{prp:PowersDixmierIdeal}, we then get
\[
\exterior(I)
= \big(\exterior(I)^{1+\varepsilon}\big)^{1-\delta}
\subseteq \big(\interior(I)\big)^{1-\delta},
\]
which finishes the proof.
\end{proof}

%==========================================================================================
%==========================================================================================
\section{Semiprime ideals}
\label{sec:SemiprimeIdeals}

%==========================================================================================
In this section, we use our theory of powers and roots of Dixmier ideals from \autoref{sec:DixmierIdeals} to prove the main result of the paper:
A not necessarily norm-closed ideal in a \ca{} is semiprime if and only if it is idempotent, if and only if it is closed under roots of positive elements;
see \autoref{prp:CharSemiprime}.
We also derive the several consequences of \autoref{prp:CharSemiprime} listed in the introduction.

%==========================================================================================
\begin{lma}
\label{prp:CharSemiprimeDixmier}
Let $I \subseteq A$ be a Dixmier ideal in a \ca.
Then the following are equivalent:
\begin{enumerate}
\item
$I = I^s$ for all $s \in (0,\infty)$.
\item
$I = I^2$.
\item
$I = I^{\frac{1}{2}}$.
\item
$I^s = I^t$ for some $s,t \in (0,\infty)$ with $s \neq t$.
\end{enumerate}
\end{lma}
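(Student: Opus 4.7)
The implications (1) $\Rightarrow$ (2), (1) $\Rightarrow$ (3), and (1) $\Rightarrow$ (4) are immediate, so the work is in the three converses. My plan is to prove (2) $\Leftrightarrow$ (3), then (4) $\Rightarrow$ (2), and finally (2) $\Rightarrow$ (1), using throughout the multiplicativity $(I^s)^t = I^{st}$ and the monotonicity $I^a \subseteq I^b$ for $a \geq b > 0$ from \autoref{prp:PowersDixmierIdeal}.

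For (2) $\Leftrightarrow$ (3), observe that $(I^2)^{\frac{1}{2}} = I = (I^{\frac{1}{2}})^2$ by \autoref{prp:PowersDixmierIdeal}(2). Hence applying $(-)^{\frac{1}{2}}$ to the identity $I = I^2$ yields $I^{\frac{1}{2}} = I$, and squaring the identity $I = I^{\frac{1}{2}}$ yields $I^2 = I$.

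For (4) $\Rightarrow$ (2), assume $I^s = I^t$ with $0 < s < t$. Set $J := I^s$ and $r := t/s > 1$, so that $J^r = (I^s)^{t/s} = I^t = I^s = J$ by \autoref{prp:PowersDixmierIdeal}(2). Iterating gives $J = J^{r^n}$ for every $n \geq 1$, and choosing $n$ with $r^n \geq 2$, monotonicity forces $J = J^{r^n} \subseteq J^2 \subseteq J$, hence $J = J^2$. Translating back, $I^s = I^{2s}$; applying $(-)^{1/s}$ then yields $I = I^2$.

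For (2) $\Rightarrow$ (1), a straightforward induction using $I^{2^{n+1}} = (I^{2^n})^2$ gives $I = I^{2^n}$ for every $n \in \NN$; applying $(-)^{1/2^n}$ then gives $I = I^{1/2^n}$ as well. For arbitrary $s \in (0,\infty)$, pick $n$ with $2^{-n} \leq s \leq 2^n$ and sandwich by monotonicity:
\[
I = I^{2^n} \subseteq I^s \subseteq I^{1/2^n} = I,
\]
so $I = I^s$. I do not anticipate any genuine obstacle, since the substance of the argument is packaged in \autoref{prp:PowersDixmierIdeal}; the only care required is bookkeeping of exponents and consistent use of the monotonicity of $s \mapsto I^s$.
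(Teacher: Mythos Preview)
Your proof is correct and takes essentially the same approach as the paper: iterate the multiplicativity $(I^s)^t=I^{st}$ from \autoref{prp:PowersDixmierIdeal} to reach arbitrarily large exponents, then use the monotonicity of $s\mapsto I^s$ to sandwich. The only difference is organizational---the paper proves the cycle $(1)\Rightarrow(2)\Rightarrow(3)\Rightarrow(4)\Rightarrow(1)$ and argues $(4)\Rightarrow(1)$ directly (first obtaining $I=I^{t/s}$ and then sandwiching any $r\geq 1$ between $1$ and some $(t/s)^n$), whereas you factor that last step through $(4)\Rightarrow(2)\Rightarrow(1)$---but the substance is identical.
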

\begin{proof}
It is clear that~(1) implies~(2), and that~(3) implies~(4).
It follows from \autoref{prp:PowersDixmierIdeal} that~(2) implies~(3).
Let us show that~(4) implies~(1).
By assumption, there exist distinct $s,t \in (0,\infty)$ with $I^s = I^t$.
Without loss of generality, we may assume that $s < t$.
Applying \autoref{prp:PowersDixmierIdeal}, we get
\[
I 
= \big(I^s\big)^{\frac{1}{s}} 
= \big(I^t\big)^{\frac{1}{s}}
= I^{\frac{t}{s}}.
\]
Applying \autoref{prp:PowersDixmierIdeal} again at the last step, we get
\[
I 
= I^{\frac{t}{s}}
= \big(I\big)^{\frac{t}{s}}
= \big(I^{\frac{t}{s}}\big)^{\frac{t}{s}}
= I^{(\frac{t}{s})^2}.
\]
Inductively, we get $I = I^{(\frac{t}{s})^n}$ for all $n \geq 1$.

Given any $r \in [1,\infty)$, pick $n$ with $r \leq (\frac{t}{s})^n$.
By \autoref{prp:PowersDixmierIdeal}, we have $I^{r_1} \subseteq I^{r_2}$ whenever $r_1 \geq r_2$.
Hence,
\[
I 
= I^{(\frac{t}{s})^n}
\subseteq I^r
\subseteq I.
\]
It follows that $I=I^r$ for all $r \in [1,\infty)$.
For $r\in(0,1]$, we use that $I=I^{\frac{1}{r}}$ and apply \autoref{prp:PowersDixmierIdeal} to get
\[
I = \big(I^{\frac{1}{r}}\big)^r = I^r,
\]
which completes the verification of~(1).
\end{proof}

%==========================================================================================
Following the terminology in Banach algebras, we say that an ideal $I$ in a \ca{} \emph{factors} if for every $x \in I$ there are $y,z \in I$ such that $x=zy$.
Further, $I$ is said to \emph{factor weakly} if $I$ is the linear span of products $xy$ for $x,y \in I$;
see, for example, \cite{DalFeiPha21FactCommBAlg}.

Recall that $I$ is \emph{idempotent} if $I=I^2$, that is, every $x \in I$ is of the form $x = y_1z_1+\ldots+y_nz_n$ for some $n$ and $y_j,z_j \in I$.
In this case, $\lambda x=\sum_j(\lambda y_j)z_j\in I$ for all $\lambda\in\CC$, and we see that idempotent ideals are automatically closed under scalar multiplication.
This shows that $I$ is idempotent if and only if it factors weakly.

%==========================================================================================
\begin{thm}
\label{prp:CharSemiprime}
Let $I \subseteq A$ be an ideal in a \ca.
Then the following are equivalent:
\begin{enumerate}
\item
$I$ is semiprime.
\item
$I$ is idempotent (factors weakly), that is, $I = I^2$.
\item
$I^m = I^n$ for some $m \neq n$.
\item
$I$ factors. %: For every $x \in I$ exists $y,z \in I$ such that $x = yz$.
\item
$I$ is closed under arbitrary roots of positive elements.
\item
$I$ is closed under square roots of positive elements.
\item
$\interior(I)$ is semiprime.
\item
$\exterior(I)$ is semiprime.
\end{enumerate}
If the above conditions hold, then $I$ is Dixmier and thus $\interior(I)=I=\exterior(I)$.
\end{thm}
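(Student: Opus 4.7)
My plan is to reduce each hypothesis to showing that $I$ equals its Dixmier closure $\exterior(I)$, so that $I$ is Dixmier and, by \autoref{prp:IntExt}, also $\interior(I)=I$; once $I$ is Dixmier, all the equivalences collapse onto \autoref{prp:CharSemiprimeDixmier}, supplemented by polar decomposition. The trivial implications $(4)\Rightarrow(2)\Rightarrow(3)$ and $(5)\Rightarrow(6)$ are immediate from the definitions.

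The workhorse for most hypotheses is the squeeze $\exterior(I)^{1+\varepsilon}\subseteq I\subseteq\exterior(I)$ from \autoref{prp:IntExt}, together with monotonicity of Dixmier powers. For $(3)$, after reducing to $I^m=I^{m+1}$ for some $m\ge 1$, the sandwich
\[
\exterior(I)^{m+m\varepsilon}=(\exterior(I)^{1+\varepsilon})^m\subseteq I^m=I^{m+1}\subseteq\exterior(I)^{m+1}
\]
combined with monotonicity forces $\exterior(I)^{m+m\varepsilon}=\exterior(I)^{m+1}$ for $\varepsilon<1/m$, so by \autoref{prp:CharSemiprimeDixmier} $\exterior(I)$ is idempotent and hence $\exterior(I)=\exterior(I)^{1+\varepsilon}\subseteq I$. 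For $(1)$, the identity $(\exterior(I)^{(1+\varepsilon)/2})^2=\exterior(I)^{1+\varepsilon}\subseteq I$ combined with semiprimeness gives $\exterior(I)^{(1+\varepsilon)/2}\subseteq I$, and since $(1+\varepsilon)/2<1$ monotonicity delivers $\exterior(I)\subseteq\exterior(I)^{(1+\varepsilon)/2}\subseteq I$. For $(8)$, apply semiprimeness with $J=\exterior(I)^{1/2}$ to force $\exterior(I)^{1/2}=\exterior(I)$, then use the squeeze; hypothesis $(7)$ is handled analogously on $\interior(I)$, after which $\exterior(I)\subseteq\interior(I)^{1-\delta}=\interior(I)$ collapses $\interior(I)=I=\exterior(I)$.

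The main obstacle is $(6)\Rightarrow I$ Dixmier, which must be handled by hand since no squeeze is directly available. The key lemma is: if $x\in A$ satisfies $x^*x\in I$, then $x\in I$. Indeed, iterating $(6)$ gives $|x|,|x|^{1/2}\in I$, and \autoref{prp:PolarRiesz}(1) provides $v\in A^{**}$ with $z:=v|x|^{1/2}\in A$, so
\[
x=v|x|=z\cdot|x|^{1/2}\in A\cdot I\subseteq I.
\]
This one implication simultaneously delivers self-adjointness of $I$ (from $xx^*\in I$ when $x\in I$), closure under complex scalars (via $(\lambda x)^*(\lambda x)=|\lambda|^2 x^*x\in I$), and strong invariance of $I_+$. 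Hereditariness follows analogously from \autoref{prp:PolarRiesz}(2): given $0\le a\le b$ with $b\in I$, write $a^{1/2}=wb^{1/2}=(wb^{1/4})\cdot b^{1/4}$ with $b^{1/4}\in I$ by iterated $(6)$, which gives $a^{1/2}\in AI\subseteq I$ and hence $a\in I$. Positive-spannedness is then routine from hereditariness and self-adjointness by splitting each self-adjoint element of $I$ into its positive and negative parts.

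Once $I$ is Dixmier under any hypothesis, the arguments above also produce $I=I^{1/2}$ (either directly from $(6)$ and monotonicity, or from idempotency of $\exterior(I)=I$ established by the squeeze), and \autoref{prp:CharSemiprimeDixmier} promotes this to $I=I^s$ for every $s>0$, yielding $(2)$, $(3)$, and $(5)$. For $(4)$, each $x\in I$ factors as $(v|x|^{1/2})\cdot|x|^{1/2}$ with both factors in $I$, since $(v|x|^{1/2})^*(v|x|^{1/2})=|x|\in I$ places $v|x|^{1/2}\in I^{1/2}=I$ by \autoref{prp:CharRootDixmierIdeal}. For $(1)$, given any ideal $J$ with $J^2\subseteq I$, the squeeze $\exterior(J)^{2+2\varepsilon}\subseteq J^2\subseteq I$ combined with $I=I^{1/(2+2\varepsilon)}$ and monotonicity of Dixmier powers forces $\exterior(J)\subseteq I$, whence $J\subseteq I$. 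Finally $(7)$ and $(8)$ follow at once since $\interior(I)=I=\exterior(I)$.
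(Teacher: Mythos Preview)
Your proof is correct and the overall architecture---reduce each hypothesis to a statement about Dixmier powers via the squeeze of \autoref{prp:IntExt}, then invoke \autoref{prp:CharSemiprimeDixmier}---is the same as the paper's. The arguments for (1), (3), (7), (8) are minor reshufflings of the paper's: you work almost exclusively with $\exterior(I)$, whereas the paper freely switches between $\interior(I)$ and $\exterior(I)$ (for instance its (3)$\Rightarrow$(1) uses $I\subseteq\interior(I)^{1-\tfrac{1}{2n}}$ rather than your sandwich with $\exterior(I)$), but the mechanics are identical.

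The one genuine divergence is your treatment of~(6). The paper never proves ``(6) $\Rightarrow$ $I$ is Dixmier'' directly; instead it routes (6)$\Rightarrow$(7) in two lines by observing that $\interior(I)^{1/2}_+\subseteq I_+\subseteq \interior(I)^{2/3}_+$, which forces $\interior(I)^{1/2}=\interior(I)^{2/3}$, and only later recovers that $I$ is Dixmier via (7)$\Rightarrow$(8)$\Rightarrow$(1). You instead establish the Dixmier property of $I$ by hand from~(6): the key lemma $x^*x\in I\Rightarrow x\in I$ (via polar decomposition and iterated square roots), then self-adjointness, scalar closure, strong invariance, hereditariness (via \autoref{prp:PolarRiesz}(2)), and finally positive-spannedness. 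This is longer but entirely self-contained and does not need the ``$I\subseteq\interior(I)^{1-\delta}$'' half of the squeeze for this step. Both routes are valid; the paper's is slicker, yours is more explicit about why~(6) alone already forces the Dixmier structure.

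One small remark: in your scalar-closure step you write $|\lambda|^2 x^*x\in I$ as if immediate. It is, but only because $|\lambda|^2 x^*\in A$ (the \ca{} is a $\CC$-algebra) and $x\in I$, so $(|\lambda|^2 x^*)x\in AI\subseteq I$; it is worth making this explicit since $I$ itself is not yet known to be closed under scalars.
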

\begin{proof}
To simplify notation, set $J := \interior(I)$ and $K := \exterior(I)$.
Then $J$ and $K$ are Dixmier ideals, and by \autoref{prp:IntExt} for every $\varepsilon\in(0,1)$ we have
\[
K^{1+\varepsilon} \subseteq J \subseteq I \subseteq K \subseteq J^{1-\varepsilon}.
\]

\medskip

\emph{We show that~(1) implies~(2) and $I=K$.}
Assume that $I$ is semiprime. 
We have
\[
(K^{\frac{2}{3}}\big)^2 
= K^\frac{4}{3}
\subseteq I.
\]
By assumption, it follows that $K^{\frac{2}{3}} \subseteq I$.
Then $K \subseteq K^{\frac{2}{3}} \subseteq I \subseteq K$, and thus $K^{\frac{2}{3}} = K = I$.
By \autoref{prp:CharSemiprimeDixmier}, we have $K=K^2$ and thus $I = I^2$.

\medskip

\emph{We show that~(3) implies~(1).}
Let $m, n \geq 1$ with $m \neq n$ and $I^m = I^n$.
Without loss of generality, we may assume that $m < n$.
We have $I \subseteq J^{1-\frac{1}{2n}}$.
Applying also \autoref{prp:PowersDixmierIdeal}, we get
\[
I^m 
= I^n 
\subseteq \Big( J^{1-\frac{1}{2n}} \Big)^n 
= J^{n-\frac{1}{2}} 
\subseteq J^m \subseteq I^m,
\]
and thus $J^{n-\frac{1}{2}} = J^m$.
It follows from \autoref{prp:CharSemiprimeDixmier} that $J=J^s$ for all $s \in (0,\infty)$, and in particular $I=J=K$. We will use this for $s=\tfrac{1}{8}$ below.

To show that $I$ is semiprime, let $L \subseteq A$ be an ideal such that $L^2 \subseteq I$.
Then
\[
\interior(L)^2 
\subseteq L^2 
\subseteq I
\subseteq J^{\frac{1}{2}}.
\]
Applying \autoref{prp:IntExt} at the first step, and \autoref{prp:PowersDixmierIdeal} at the second and fourth step, we get
\[
L 
\subseteq \interior(L)^{\frac{1}{2}} 
= \big(\interior(L)^2\big)^\frac{1}{4}
\subseteq \big(J^{\frac{1}{2}}\big)^\frac{1}{4}
= J^{\frac{1}{8}}
= J
\subseteq I,
\]
as desired.

It is clear that~(2) implies~(3).
This shows that~(1), (2) and~(3) are equivalent and that each of them implies that $I$ is Dixmier.

\medskip

\emph{We show that~(1) implies~(5).}
Assume that $I$ is semiprime.
We have seen that every semiprime ideal is idempotent and Dixmier.
Thus, $I$ is Dixmier with $I=I^2$.
Then $I=I^{\frac{1}{n}}$ for every $n \geq 1$ by \autoref{prp:CharSemiprimeDixmier}, which shows that $I$ is closed under arbitrary roots of positive elements.

\medskip 

It is clear that~(5) implies (6).
\emph{We show that~(6) implies~(4).}
Let $x \in I$.
Applying polar decomposition (\autoref{prp:PolarRiesz}~(1)) for $x$, we obtain a partial isometry $v \in A^{**}$ such that $x = v|x|$ in $A^{**}$, and such that $y := v|x|^{\frac{1}{2}}$ belongs to~$A$.
Then
\[
x
= v|x|
= \big( v|x|^\frac{1}{2} \big) |x|^{\frac{1}{2}}
= y (x^*x)^\frac{1}{4}
= \big( y (x^*x)^\frac{1}{8} \big) (x^*x)^\frac{1}{8}.
\]
Since $I$ is an ideal, we have $x^*x \in I_+$.
Using the assumption, we get $(x^*x)^{\frac{1}{2}} \in I_+$, then $(x^*x)^\frac{1}{4} \in I_+$, and finally $(x^*x)^\frac{1}{8} \in I_+$.
Using again that $I$ is an ideal, it follows that $y(x^*x)^\frac{1}{8} \in I$, and so $x$ is the product of two elements in $I$.

It is clear that~(4) implies~(3).
This shows that (1)--(6) are equivalent.

\medskip

\emph{We show that~(6) implies~(7).}
Assume that $a^{\frac{1}{2}} \in I$ for every $a \in I_+$.
Since $J \subseteq I$, we deduce that $J^{\frac{1}{2}}_+ \subseteq I_+$.
We always have $I \subseteq J^{\frac{2}{3}}\subseteq J^{\frac{1}{2}}$, and thus $J^{\frac{1}{2}} = J^{\frac{2}{3}}$.
By \autoref{prp:CharSemiprimeDixmier}, we get $J=J^2$, and so $J$ is semiprime by the already established equivalence between~(1) and~(2).

\medskip

\emph{We show that~(7) implies~(8).}
Assume that $J$ is semiprime.
Then $J=J^2$ by the established equivalence between~(1) and~(2).
Applying \autoref{prp:CharSemiprimeDixmier}, we get $J=J^{\frac{1}{2}}$, and then
\[
J \subseteq I \subseteq K \subseteq J^{\frac{1}{2}} =J.
\]
This shows that $K=J$, and so $K$ is semiprime.

\medskip

\emph{We show that~(8) implies~(1).}
Assume that $K$ is semiprime.
Then $K=K^2$ by the established equivalence between~(1) and~(2).
Then
\[
K^2 \subseteq I \subseteq K = K^2.
\]
This shows that $I=K$, and so $I$ is semiprime.
\end{proof}

%==========================================================================================
\begin{rmks}
(1)
Ideals that are closed under square roots of positive elements were studied by Pedersen--Petersen in \cite{PedPet70IdealsCAlg}, who called them \emph{algebraic ideals}. 
Among other things, they showed in \cite[below Corollary~1.2]{PedPet70IdealsCAlg} that 
this property is equivalent to $a^t \in I_+$ for all $a \in I_+$ and $t>0$.
\autoref{prp:CharSemiprime} shows that the `algebraic ideals' studied by Pedersen--Petersen are precisely the semiprime ideals in \ca{s}.

(2)
A ring is said to be \emph{fully idempotent} if every ideal in it is idempotent, and
\emph{fully semiprime} if every ideal in it is semiprime. 
Courter showed in \cite{Cou69AllFactorRgsSemiprime} that a (possibly non-unital) ring is fully idempotent if and only if it is fully semiprime.
In this result, it is crucial to assume that \emph{all} ideals of the ring are either semiprime or idempotent (depending on the implication), since
in general, a semiprime ideal does not need to be idempotent, and an idempotent ideal does not have to be semiprime.

For example, let $R:=\CC[x]/x^2\CC[x]$. Then the ideal $xR\subseteq R$ is semiprime (indeed, maximal) but not idempotent, and the ideal $0\subseteq R$ is idempotent but not semiprime since $xR \not \subseteq 0 = (xR)^2$. 
%For example, let $R$ be a ring that is neither idempotent nor semiprime, and let $S$ be a ring that is idempotent and semiprime. Then the ideal $R \oplus 0$ in $R \oplus S$ is semiprime but not idempotent, and the ideal $0 \oplus S$ is idempotent but not semiprime.

Given a \ca{} $A$, Courter's result shows that \emph{all} ideals in $A$ are semiprime if and only if \emph{all} ideals in $A$ are idempotent.
Our result \autoref{prp:CharSemiprime} generalizes this to a statement about individual ideals in \ca{s}.
This is a strong generalization since many interesting \ca{s} are not fully idempotent.
\end{rmks}

%==========================================================================================
\begin{exa}
\label{exa:PedersenIdeal}
Every \ca{} contains a smallest norm-dense ideal, called its \emph{Pedersen ideal};
see \cite[Section~5.6]{Ped79CAlgsAutGp} and \cite[Section~II.5.2]{Bla06OpAlgs} for details.
By \cite[Proposition~5.6.2]{Ped79CAlgsAutGp}, the Pedersen ideal is closed under roots of positive elements.
It follows from \autoref{prp:CharSemiprime} that the Pedersen ideal is semiprime.
\end{exa}

%==========================================================================================
\begin{cor}
\label{prp:SemiprimeDixmier}
Semiprime ideals in \ca{s} are Dixmier.
In particular, prime and semiprime ideals in \ca{s} are positively spanned (hence self-adjoint and closed under scalar multiplication) and hereditary.
\end{cor}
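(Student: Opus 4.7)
The plan is to deduce the corollary directly from \autoref{prp:CharSemiprime}. Indeed, the concluding sentence of that theorem already asserts that any ideal satisfying condition~(1) (i.e., any semiprime ideal) is Dixmier, so the first sentence of the corollary is immediate.

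For the second sentence, I first need to observe that every prime ideal is semiprime. This is a standard ring-theoretic observation: if $I$ is prime and $J$ is an ideal with $J^2 \subseteq I$, then applying the defining property of primeness to the product $J \cdot J \subseteq I$ yields $J \subseteq I$. Hence prime ideals fall under the scope of the first sentence of the corollary and are therefore Dixmier as well.

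Finally, the additional properties follow by unpacking the definition of a Dixmier ideal (\autoref{dfn:DixmierIdeal}): such an ideal is by definition positively spanned and hereditary. As recorded in \autoref{pgr:OrderIdeal}, any positively spanned ideal is automatically self-adjoint and closed under scalar multiplication, since it is the linear span of its positive part. There is no real obstacle here once \autoref{prp:CharSemiprime} is in hand; the entire content of the corollary is essentially a repackaging of what has already been proved, together with the trivial implication from prime to semiprime.
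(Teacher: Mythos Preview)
Your proof is correct and follows essentially the same route as the paper's: both simply invoke the final sentence of \autoref{prp:CharSemiprime} and then unpack \autoref{dfn:DixmierIdeal}. Your version is slightly more explicit in noting that prime implies semiprime and in citing \autoref{pgr:OrderIdeal} for the parenthetical consequences, but the argument is the same.
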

\begin{proof}
The first claim is the last statement in \autoref{prp:CharSemiprime} that semiprime 
ideals are Dixmier.
By definition, Dixmier ideals are positive spanned and hereditary.
\end{proof}

%==========================================================================================
\begin{rmk}
While semiprime ideals in \ca{s} are always Dixmier, the converse does not hold.
Indeed, in von Neumann algebras, every ideal is Dixmier (\autoref{prp:IdealsVNA}), but in the type~$\mathrm{I}_\infty$ factor $B(\ell^2(\NN))$ not every ideal is semiprime \cite{MorSal73SemiprimeIdlsBH}.

Another example is given by the strongly invariant order ideal
\[
M := \big\{ f \in C([0,1])_+ : f(t) \leq nt \text{ for some $n \in \NN$ and all $t\in[0,1]$} \big\}
\]
in the \ca{} $C([0,1])$.
The corresponding ideal $I := \linspan(M)$ is Dixmier in $C([0,1])$, but not semiprime since it is not closed under roots of positive elements.
\end{rmk}

%==========================================================================================
The following dichotomy follows directly from \autoref{prp:CharSemiprime}.

%==========================================================================================
\begin{cor}
\label{prp:Dichotomy}
Let $I \subseteq A$ be an ideal in a \ca.
Then the sequence $(I^n)_{n \geq 1}$ of powers of $I$ is either constant (which happens precisely if $I$ is semiprime) or strictly decreasing.
\end{cor}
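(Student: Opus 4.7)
The plan is to deduce the dichotomy directly from \autoref{prp:CharSemiprime}, which supplies all the equivalences needed.

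First, I would observe that for any ideal $I$ in a \ca{} $A$, the sequence $(I^n)_{n \geq 1}$ is automatically monotone decreasing. Indeed, $I^{n+1}$ is the additive subgroup generated by products of $n+1$ elements of $I$, and since $I \cdot I^n \subseteq I^n$ (because $I^n$ is an ideal), we have $I^{n+1} \subseteq I^n$ for every $n \geq 1$. So the only question is whether the chain $I \supseteq I^2 \supseteq I^3 \supseteq \cdots$ ever stabilizes, and if it does, whether it stays constant from some point on or is in fact constant throughout.

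Next, I would split into the two directions. If $I$ is semiprime, then by the implication (1)$\Rightarrow$(2) of \autoref{prp:CharSemiprime} we have $I = I^2$. A straightforward induction then yields $I^{n+1} = I \cdot I^n = I \cdot I = I^2 = I$ for all $n \geq 1$, so the sequence is constant. Conversely, suppose the sequence is not strictly decreasing. Then there exists some $m \geq 1$ with $I^m = I^{m+1}$, and since $m \neq m+1$, the implication (3)$\Rightarrow$(1) of \autoref{prp:CharSemiprime} tells us that $I$ is semiprime. By the previous direction, the sequence is then constant.

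Combining the two directions gives the stated dichotomy: the sequence is either constant (precisely when $I$ is semiprime) or strictly decreasing (when $I$ fails to be semiprime). There is no genuine obstacle here, since the heavy lifting has already been done in \autoref{prp:CharSemiprime}; the only content of the corollary is to repackage the equivalence of conditions (1), (2) and (3) there into a statement about the full sequence of powers rather than individual coincidences $I^m = I^n$.
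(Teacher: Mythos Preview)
Your proof is correct and is essentially the same as the paper's, which simply states that the dichotomy follows directly from \autoref{prp:CharSemiprime} without spelling out the details. Your argument makes explicit exactly the intended reasoning: monotonicity of the chain, the implication (1)$\Rightarrow$(2) for the constant case, and (3)$\Rightarrow$(1) to rule out partial stabilization.
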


%==========================================================================================
Let $E$ be a right module over a \ca{} $A$.
The \emph{trace ideal} of~$E$ is the set of elements in $A$ arising as finite sums of elements in the images of module homomorphisms $E \to A$.
We refer to \cite[Section~2H]{Lam99LectModulesRgs} for details.
In particular, the trace ideal is indeed a (two-sided) ideal of~$A$ by \cite[Proposition~2.40]{Lam99LectModulesRgs}.
(While rings in \cite{Lam99LectModulesRgs} are assumed to be unital, one can check that the proof of \cite[Proposition~2.40]{Lam99LectModulesRgs} works also for non-unital rings.)

The module $E$ is said to be \emph{unital} if $E=EA$, that is, every $x \in E$ can be written as $x = x_1a_1 + \ldots + x_na_n$ for some $x_j \in E$ and $a_j \in A$.
We note that every module over a unital \ca{} is automatically unital.

The next result is of interest for studying the ideal structure of the `ring-theoretic Cuntz semigroup' of a \ca{}, which was introduced in \cite{AntAraBosPerVil23arX:CuRg, AntAraBosPerVil24arX:IdlsCuRg}.

%==========================================================================================
\begin{cor}
\label{prp:TraceIdeals}
Trace ideals of projective, unital modules over \ca{s} are semiprime.
\end{cor}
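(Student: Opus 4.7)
The plan is to reduce the claim to an idempotency statement and then invoke the main theorem of the paper. By \autoref{prp:CharSemiprime}, an ideal in a C*-algebra is semiprime if and only if it is idempotent, so it suffices to show that the trace ideal $I = \mathrm{tr}(E)$ of a projective unital right $A$-module $E$ satisfies $I = I^2$. This is in fact a purely ring-theoretic statement, valid for projective unital modules over any ring.

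To prove idempotency, I would use the dual basis lemma for projective modules. Since $E$ is projective and unital, there exist families $(x_\lambda)_{\lambda \in \Lambda} \subseteq E$ and $(f_\lambda)_{\lambda \in \Lambda} \subseteq \mathrm{Hom}_A(E, A)$ such that for every $x \in E$, the set $\{\lambda : f_\lambda(x) \neq 0\}$ is finite, and $x = \sum_\lambda x_\lambda \cdot f_\lambda(x)$. Given any $\varphi \in \mathrm{Hom}_A(E, A)$ and $x \in E$, applying $\varphi$ to this decomposition and using right $A$-linearity yields
\[
\varphi(x) = \sum_\lambda \varphi(x_\lambda) \cdot f_\lambda(x),
\]
which lies in $I \cdot I = I^2$, since each $\varphi(x_\lambda)$ and each $f_\lambda(x)$ belongs to $I$. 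Hence every additive generator of $I$ is in $I^2$, and since $I^2$ is itself an ideal (in particular closed under addition), this gives $I \subseteq I^2$; the reverse inclusion is immediate.

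The heavy lifting has already been done in establishing \autoref{prp:CharSemiprime}: the novelty here is only in converting the classical ring-theoretic fact that trace ideals of projective modules are idempotent into the C*-algebraic statement about semiprimeness. The only subtlety is ensuring that the dual basis lemma applies in the possibly non-unital setting, and this is precisely where the unital assumption $E = EA$ enters: it guarantees that $E$ is a direct summand of a free unital $A$-module, from which the dual basis is extracted in the standard way. I do not foresee any genuine obstacle beyond reciting this argument.
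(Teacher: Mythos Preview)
Your proposal is correct and essentially identical to the paper's own proof: both reduce to showing $I=I^2$ via \autoref{prp:CharSemiprime}, and both establish idempotency by writing $\varphi(x)=\sum \varphi(x_\lambda)f_\lambda(x)\in I^2$. The paper constructs the dual basis explicitly by splitting the surjection $\bigoplus_P A\to P$ (which is where unitality enters), whereas you invoke the dual basis lemma by name, but this is the same argument.
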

\begin{proof}
Let $P$ be a projective, unital module over a \ca{} $A$, and let~$I$ denote its trace ideal.
We show that $I$ is idempotant, following the argument in the proof of \cite[Proposition~2.40~(2)]{Lam99LectModulesRgs}.
The result then follows since idempotent ideals in \ca{s} are semiprime by \autoref{prp:CharSemiprime}.

Consider the free $A$-module $\bigoplus_P A$ over the index set $P$, which consists of tuples $(a_p)_{p \in P}$ with $a_p \in A$ for $p \in P$ such that at most finitely many $a_p$ are non-zero.
We define $\pi \colon \bigoplus_P A \to P$ by 
\[
\pi\big( (a_p)_{p \in P} \big) = \sum_{p \in P} p a_p.
\]
for $(a_p)_{p \in P} \in \bigoplus_P A$.
Then $\pi$ is a well-defined module homomorphism.
Using that~$P$ is unital, we see that $\pi$ is surjective.
Since $P$ is projective, we get a module homomorphism $\sigma \colon P \to \bigoplus_P A$ such that $\pi\circ\sigma = \mathrm{id}$.
Note that $\sigma$ is given by a family of module homomorphisms $\sigma_p \colon P \to A$ such that $\sigma(x)=(\sigma_p(x))_{p \in P}$ for every $x \in P$.

Given a module homomorphism $\varphi \colon P \to A$ and $x \in P$, we have
\[
\varphi(x)
= \varphi\big( \pi(\sigma(x)) \big)
= \varphi\Big( \sum_{p \in P} p\sigma_p(x) \Big)
= \sum_{p \in P} \varphi(p)\sigma_p(x)
\in I^2.
\]
Using that every element of $I$ is a finite sum of elements of the form $\varphi(x)$, it follows that $I$ is idempotent.
\end{proof}

%==========================================================================================
Given an ideal $I$ in a ring $R$, its \emph{semiprime closure}, denoted by $\sqrt{I}$, can be defined as the intersection of all prime ideals containing $I$, with the convention that $\sqrt{I}=R$ if $I$ is not contained in any prime ideal;
see \cite[Section~10]{Lam01FirstCourse2ed} for details. Note that $\sqrt{I}$ is 
always semiprime. 

A ring is said to be \emph{nil} if each of its elements is nilpotent (but with the degree of nilpotency possibly depending on the element).
One says that a ring $R$ is \emph{radical} over an ideal $I \subseteq R$ if for every $x \in R$ there exists $n \geq 1$ such that $x^n \in I$, that is, if $R/I$ is nil.

%==========================================================================================
\begin{thm}
\label{prp:SemiprimeClosure}
Let $I \subseteq A$ be an ideal in a \ca{}, and let $J := \interior(I)$ and $K := \exterior(I)$ denote the Dixmier interior and closure of $I$, respectively. 
Then
\begin{align*}
\sqrt{I} 
&= \sqrt{J} 
= \sqrt{K} 
= \bigcup_{n=1}^{\infty} J^{\frac{1}{n}}
= \bigcup_{n=1}^{\infty} K^{\frac{1}{n}} \\
&= \big\{ x \in A : (x^*x)^n \in I_+ \text{ for some $n \geq 1$} \big\} \\
&= \big\{ x \in A : (xx^*)^n \in I_+ \text{ for some $n \geq 1$} \big\}.
\end{align*}

In particular, $\sqrt{I}$ is the union of the increasing sequence of ideals $\big(J^{\frac{1}{n}}\big)_{n}$, all of which are radical over $I$. \end{thm}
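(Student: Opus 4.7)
The plan is to introduce $U := \bigcup_{n=1}^{\infty} J^{1/n}$, prove that $\sqrt{I} = U$, and then deduce the remaining equalities and descriptions. The central tools are the tight squeeze $I \subseteq K \subseteq J^{1/2}$ furnished by \autoref{prp:IntExt}, the algebra of Dixmier powers from \autoref{prp:PowersDixmierIdeal}, and the characterization of semiprime ideals as idempotent Dixmier ideals from \autoref{prp:CharSemiprime}.

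First, $U$ is an ideal, being an increasing union of the Dixmier ideals $J^{1/n} \subseteq J^{1/(n+1)}$, and $U$ is idempotent since $J^{1/n} = J^{1/(2n)} \cdot J^{1/(2n)} \subseteq U^2$; hence $U$ is semiprime by \autoref{prp:CharSemiprime}. The inclusion $I \subseteq U$ is immediate from $I \subseteq K \subseteq J^{1/2} \subseteq U$, so $\sqrt{I} \subseteq U$. For the reverse, let $L$ be any semiprime ideal with $I \subseteq L$. By \autoref{prp:CharSemiprime}, $L$ is Dixmier and satisfies $L = L^{1/n}$ for every $n$; and $J \subseteq I \subseteq L$ yields $J^{1/n} \subseteq L^{1/n} = L$ by applying the root operation to this inclusion of Dixmier ideals via \autoref{prp:DixmierVsStrInvOrderIdeal} and \autoref{prp:PowersDixmierIdeal}, so $U \subseteq L$ and thus $U \subseteq \sqrt{I}$. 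The equalities $\sqrt{J} = U$ and $\sqrt{K} = U$ then follow by repeating the argument with $I$ replaced by the Dixmier ideals $J$ or $K$, whose Dixmier interiors coincide with themselves; the identity $\bigcup_n K^{1/n} = U$ drops out from $J^{1/n} \subseteq K^{1/n} \subseteq (J^{1/2})^{1/n} = J^{1/(2n)} \subseteq U$, again invoking \autoref{prp:IntExt} and \autoref{prp:PowersDixmierIdeal}.

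For the element-level descriptions, I would use \autoref{prp:CharRootDixmierIdeal}. If $(x^*x)^n \in I_+$, then $(x^*x)^n \in J^{1/2}_+$, so by \autoref{prp:PowersDixmierIdeal}(1) applied to the Dixmier ideal $J^{1/2}$ we obtain $x^*x \in J^{1/(2n)}_+$, whence $x \in J^{1/(4n)} \subseteq U$. Conversely, if $x \in J^{1/n}$, then $x^*x \in (J^{1/n})^2 = J^{2/n}$, and raising to a sufficiently high power $m \geq n/2$ gives $(x^*x)^m \in J \subseteq I_+$; in particular $x^n \in (J^{1/n})^n = J \subseteq I$, which proves the radicality claim. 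The description with $xx^*$ follows since $U$ is self-adjoint, so one applies the preceding to $x^*$. The main bookkeeping obstacle is that $I$ itself is not generally Dixmier, so one must carefully thread exponents between $I$, $J$, and $K$ using the squeeze $K^{1+\varepsilon} \subseteq J \subseteq I \subseteq K \subseteq J^{1-\delta}$ at each step; the power laws of \autoref{prp:PowersDixmierIdeal} are what allow these shifts in exponent to be absorbed cleanly.
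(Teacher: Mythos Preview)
Your proposal is correct and follows essentially the same approach as the paper: both arguments show that $U=\bigcup_n J^{1/n}$ is semiprime via \autoref{prp:CharSemiprime} (you use idempotence, the paper uses closure under square roots), sandwich it against $\sqrt{I}$, $\sqrt{J}$, $\sqrt{K}$ using the squeeze from \autoref{prp:IntExt} together with the power calculus of \autoref{prp:PowersDixmierIdeal}, and then extract the element-level descriptions through \autoref{prp:CharRootDixmierIdeal}. The only cosmetic difference is that the paper verifies the equalities via a single cyclic chain of inclusions, whereas you argue the two containments $\sqrt{I}\subseteq U$ and $U\subseteq\sqrt{I}$ separately through the universal property of the semiprime closure.
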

\begin{proof}
By \autoref{prp:IntExt}, $J$ and $K$ are Dixmier ideals such that
\[
K^{1+\varepsilon} \subseteq J \subseteq I \subseteq K \subseteq J^{1-\varepsilon}
\]
for every $\varepsilon \in (0,1)$.
Since $J \subseteq K$, we have $J^{\frac{1}{n}} \subseteq K^{\frac{1}{n}}$ for each $n$, and thus $\bigcup_{n=1}^{\infty} J^{\frac{1}{n}} \subseteq \bigcup_{n=1}^{\infty} K^{\frac{1}{n}}$.
Further, $\sqrt{J}$ is by definition a semiprime ideal and therefore a Dixmier ideal satisfying $\sqrt{J} = (\sqrt{J})^{\frac{1}{2n}}$ for every $n$ by \autoref{prp:CharSemiprime}.
We also have $K^2 \subseteq J \subseteq \sqrt{J}$, and thus
\[
K^{\frac{1}{n}}
= \big( K^2 \big)^{\frac{1}{2n}}
\subseteq \big( \sqrt{J} \big)^{\frac{1}{2n}}
= \sqrt{J}.
\]
This shows that
\begin{align}
\label{prp:SemiprimeClosure:Inclusions}
\bigcup_{n=1}^{\infty} J^{\frac{1}{n}}
\subseteq \bigcup_{n=1}^{\infty} K^{\frac{1}{n}}
\subseteq \sqrt{J}
\subseteq \sqrt{I}
\subseteq \sqrt{K}.
\end{align}

Next, we show that $\bigcup_{n=1}^{\infty} J^{\frac{1}{n}}$ is a semiprime ideal.
To verify that it is closed under square roots of positive elements, let $a \in (\bigcup_{n=1}^{\infty} J^{\frac{1}{n}})_+$, and choose $n \geq 1$ such that $a \in J^{\frac{1}{n}}_+$.
Then $a^{\frac{1}{2}} \in J^{\frac{1}{2n}}_+ \subseteq \bigcup_{n=1}^{\infty} J^{\frac{1}{n}}$.
By \autoref{prp:CharSemiprime}, it follows that $\bigcup_{n=1}^{\infty} J^{\frac{1}{n}}$ is semiprime.
Since $\sqrt{K}$ is the smallest semiprime ideal in $A$ containing~$K$, and since we have $K \subseteq J^{\frac{1}{2}} \subseteq \bigcup_{n=1}^{\infty} J^{\frac{1}{n}}$, we deduce that $\sqrt{K} \subseteq \bigcup_{n=1}^{\infty} J^{\frac{1}{n}}$.
It follows that all sets in \eqref{prp:SemiprimeClosure:Inclusions} are equal.
In particular, $\sqrt{K}=\bigcup_{n \geq 1}K^{\frac{1}{n}}$, and similarly for $J$. 

By \autoref{prp:CharRootDixmierIdeal}, we have
\[
L^{\frac{1}{2n}} 
= \big\{ x \in A : (x^*x)^n \in L \big\}
= \big\{ x \in A : (xx^*)^n \in L \big\}
\]
for every Dixmier ideal $L \subseteq A$.
Applying this for $J$ and $K$, and using that $J \subseteq I \subseteq K$, we deduce the remaining equalities. 

For the last statement, the fact that $J^{\frac{1}{n}}$ is radical over $I$
follows from the fact that $(J^{\frac{1}{n}})^n = J \subseteq I$.
\end{proof}

%==========================================================================================
Given a ring $R$, the \emph{lower nilradical}, also called the \emph{prime radical}, of $R$ is defined as $\Nil_*(R) := \sqrt{0}$, that is, the semiprime closure of the zero ideal;
see \cite[Definition~10.13]{Lam01FirstCourse2ed}.
The \emph{upper nilradical} $\Nil^*(R)$ is the (unique) largest nil ideal in~$R$;
see \cite[Definition~10.26]{Lam01FirstCourse2ed}.
We have $\Nil_*(R) \subseteq\Nil^*(R)$ in every ring.

%==========================================================================================
\begin{cor}
\label{prp:NilRadicals}
Let $I \subseteq A$ be an ideal in a \ca{}, and let $J \subseteq A$ be a one-sided ideal that is radical over $I$.
Then $J \subseteq \sqrt{I}$.

In particular, we have $\Nil_*(A/I) = \Nil^*(A/I)$.
\end{cor}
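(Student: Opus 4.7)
The plan is to leverage the root-power characterization of the semiprime closure established in \autoref{prp:SemiprimeClosure}:
\[
\sqrt{I} = \big\{ x \in A : (x^*x)^n \in I_+ \text{ for some } n \geq 1 \big\} = \big\{ x \in A : (xx^*)^n \in I_+ \text{ for some } n \geq 1 \big\}.
\]
This characterization is the key tool that makes the one-sided statement accessible.

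First I would treat the two one-sided cases separately. Given a left ideal $J \subseteq A$ that is radical over $I$ and $x \in J$, the element $x^*x = x^* \cdot x$ lies in $J$ since $J$ is closed under left multiplication by elements of $A$. The radicality hypothesis then yields $n \geq 1$ with $(x^*x)^n \in I$, and since $(x^*x)^n$ is positive, the first characterization above gives $x \in \sqrt{I}$. The right-ideal case is entirely symmetric, using $xx^* = x \cdot x^* \in J$ and the second characterization.

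For the second assertion, I would use the standard correspondence between ideals of $A/I$ and ideals of $A$ containing $I$, under which prime ideals match. This yields $\Nil_*(A/I) = \sqrt{I}/I$ directly from the definition of the lower nilradical as the intersection of all prime ideals. On the other side, the preimage $K \subseteq A$ of $\Nil^*(A/I)$ is a two-sided ideal with $I \subseteq K$, and the nilness of $K/I = \Nil^*(A/I)$ is precisely the statement that $K$ is radical over $I$. Applying the first part, $K \subseteq \sqrt{I}$, hence $\Nil^*(A/I) \subseteq \Nil_*(A/I)$; the reverse inclusion is automatic in any ring, so equality follows.

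I do not anticipate a substantial obstacle here: the heavy lifting has been done in \autoref{prp:SemiprimeClosure}, and the only subtlety is the simple but decisive observation that any one-sided ideal automatically contains $x^*x$ or $xx^*$ whenever it contains $x$, which is exactly what is needed to feed the radicality hypothesis into the characterization of $\sqrt{I}$.
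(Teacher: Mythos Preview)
Your proposal is correct and follows essentially the same approach as the paper: both use the characterization of $\sqrt{I}$ from \autoref{prp:SemiprimeClosure} together with the observation that $x^*x$ (respectively $xx^*$) lies in any left (respectively right) ideal containing $x$, and then deduce the nilradical equality by pulling back $\Nil^*(A/I)$ to a two-sided ideal of $A$ radical over $I$. The paper is terser on the second assertion, but your more explicit unwinding of the ideal correspondence is exactly what underlies its one-line conclusion.
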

\begin{proof}
%Let $J_0 \subseteq A$ be the pre-image of $J$ with respect to the quotient map $A \to A/I$.
%Then $J_0$ is a one-sided ideal in $A$ that is radical over $I$.
Assume first that $J$ is a left ideal.
Given $x \in J$, we have $x^*x \in J$, and therefore $(x^*x)^n \in I$ for some $n \geq 1$.
By \autoref{prp:SemiprimeClosure}, this implies that $x \in \sqrt{I}$, as desired.
If $J$ is a right ideal, we consider powers of $xx^*$ instead of $x^*x$.

It follows that every two-sided nil ideal in $A/I$ is contained in the prime radical of $A/I$.
\end{proof}

%==========================================================================================
\begin{rmk}
\label{rmk:KoetheConjecture}
K\"{o}the's conjecture predicts that for every ring $R$, every one-sided nil ideal in $R$ is contained in $\Nil^*(R)$;
see \cite[Conjecture~10.28]{Lam01FirstCourse2ed}.
\autoref{prp:NilRadicals} shows in particular that K\"{o}the's conjecture holds for rings that arise as the quotient of a \ca{} $A$ by a (not necessarily norm-closed) ideal $I$.

If the ideal $I$ is closed under scalar multiplication, then $A/I$ is a $\CC$-algebra, and this case is covered by Amitsur's verification \cite[Theorem~10]{Ami56AlgsInfFields} of K\"{o}the's conjecture for algebras over uncountable fields.
As noted before, there exist ideals in \ca{s} that are not closed under scalar multiplication, and for the corresponding quotient rings our verification of K\"{o}the's conjecture is new.
\end{rmk}

%==========================================================================================
\begin{thm}
\label{prp:LatticeSemiprimeIdeals}
The semiprime ideals of a \ca{} $A$ form a complete sublattice of the ideal lattice of $A$.
Thus, given an ideal $I \subseteq A$, there exists a largest semiprime ideal $I^\infty$ contained in $I$, namely:
\[
I^\infty := \bigvee \big\{ J \subseteq A \colon J \text{ semiprime ideal}, J \subseteq I \big\}.
\]
Moreover, we have
\begin{align*}
I^\infty 
= \bigcap_{n=1}^\infty I^{n}
&= \big\{ x \in A : (x^*x)^{\frac{1}{n}} \in I_+ \text{ for every $n \geq 1$} \big\} \\
&= \big\{ x \in A : (xx^*)^{\frac{1}{n}} \in I_+ \text{ for every $n \geq 1$} \big\}.
\end{align*}
\end{thm}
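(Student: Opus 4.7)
The plan has three steps: first, verify that the semiprime ideals of $A$ form a complete sublattice; second, identify the resulting $I^\infty$ with the chain intersection $\bigcap_n I^n$; third, unfold this into the $(x^*x)^{1/n}$ and $(xx^*)^{1/n}$ conditions via \autoref{prp:CharRootDixmierIdeal}.

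For the sublattice claim, closure under arbitrary intersections is immediate from the definition: if each $J_\lambda$ is semiprime and an ideal $L$ satisfies $L^2 \subseteq \bigcap_\lambda J_\lambda$, then $L \subseteq J_\lambda$ for every $\lambda$, and hence $L \subseteq \bigcap_\lambda J_\lambda$. Closure under arbitrary sums uses the characterization \emph{semiprime $=$ idempotent} from \autoref{prp:CharSemiprime}: if $J_\lambda = J_\lambda^2$ for every $\lambda$, then $\sum_\lambda J_\lambda = \sum_\lambda J_\lambda^2 \subseteq \bigl(\sum_\lambda J_\lambda\bigr)^2$, while the reverse inclusion is trivial. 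The largest semiprime ideal $I^\infty$ contained in $I$ then exists as the join of all semiprime subideals of $I$. The inclusion $I^\infty \subseteq \bigcap_n I^n$ follows because any semiprime $J \subseteq I$ is idempotent, so $J = J^n \subseteq I^n$ for every $n$, whence $J \subseteq \bigcap_n I^n$; since $\bigcap_n I^n$ is itself an ideal, the join of such $J$ inherits this containment.

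For the reverse inclusion, I plan to show that $\bigcap_n I^n$ is itself semiprime. Setting $K := \exterior(I)$, \autoref{prp:IntExt} gives $K^2 \subseteq I \subseteq K$; raising to the $n$-th power using the power law of \autoref{prp:PowersDixmierIdeal} yields $K^{2n} \subseteq I^n \subseteq K^n$ for every $n \geq 1$. Since $\{K^{2n}\}_n$ is cofinal in $\{K^n\}_n$, this forces $\bigcap_n I^n = \bigcap_n K^n$. The compatibility of Dixmier intersections with powers from \autoref{prp:LatticeDixmierIdeals} then gives
\[
\Bigl(\bigcap_n K^n\Bigr)^{\!2} \;=\; \bigcap_n K^{2n} \;=\; \bigcap_n K^n,
\]
so $\bigcap_n I^n$ is idempotent, hence semiprime by \autoref{prp:CharSemiprime}, and therefore contained in $I^\infty$.

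For the remaining equalities, applying \autoref{prp:CharRootDixmierIdeal} to the Dixmier ideal $K^{2m}$ (whose square root is $K^m$ by \autoref{prp:PowersDixmierIdeal}) gives that $x \in K^m$ is equivalent to $x^*x \in K^{2m}$, which in turn is equivalent to $(x^*x)^{1/(2m)} \in K_+$. Hence $x \in I^\infty = \bigcap_m K^m$ if and only if $(x^*x)^{1/(2m)} \in K_+$ for every $m$. To transfer this from $K$ to $I$, I use $K^2 \subseteq I$ once more: if $(x^*x)^{1/(2n)} \in K_+$ then $(x^*x)^{1/n} = \bigl((x^*x)^{1/(2n)}\bigr)^2 \in K^2_+ \subseteq I_+$, while conversely $(x^*x)^{1/m} \in I_+ \subseteq K_+$ supplies the $K$-side membership. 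The characterization via $xx^*$ follows by running the same argument with $x^*$ in place of $x$, or directly from the strong invariance of $K_+$. The main obstacle is the middle step: the raw ring-theoretic powers $I^n$ are not directly accessible to the polar-type tools of \autoref{sec:RelationPositive}, so one must sandwich them between Dixmier powers of $K$, which hinges on the fact from \autoref{prp:PowersDixmierIdeal} that the algebraic and Dixmier notions of power agree on Dixmier ideals.
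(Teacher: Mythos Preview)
Your argument is correct and follows essentially the same route as the paper: sandwich $I$ between Dixmier powers of $K=\exterior(I)$, use the compatibility of intersections with powers from \autoref{prp:LatticeDixmierIdeals} to see $\bigcap_n K^n$ is idempotent, and then unwind via \autoref{prp:CharRootDixmierIdeal}. Two small differences are worth noting. First, for closure under sums you give the elementary inclusion $\sum_\lambda J_\lambda = \sum_\lambda J_\lambda^2 \subseteq (\sum_\lambda J_\lambda)^2$, whereas the paper invokes \autoref{prp:LatticeDixmierIdeals} to get $(\sum_\lambda J_\lambda)^2 = \sum_\lambda J_\lambda^2$; your version is simpler and avoids the Dixmier machinery at that point. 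Second, the paper keeps both $J=\interior(I)$ and $K=\exterior(I)$ in play and uses the chain $\ldots \subseteq J^2 \subseteq I^2 \subseteq K^2 \subseteq J \subseteq I \subseteq K$ to obtain the final equalities (one inclusion via $J$, the other via $K$), while you work entirely with $K$ and the single relation $K^2 \subseteq I \subseteq K$; this is a mild streamlining and perfectly valid.
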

\begin{proof}
Since semiprime ideals can be characterized as those ideals that arise as the intersection of some family of prime ideals, it follows that the intersection of an arbitrary family of semiprime ideals is again semiprime.

To show that semiprime ideals are closed under suprema, let $(I_\lambda)_{\lambda \in \Lambda}$ be a collection of semiprime ideals.
Since each $I_\lambda$ is idempotent and Dixmier by \autoref{prp:CharSemiprime}, it follows from \autoref{prp:LatticeDixmierIdeals} that 
\[
\Bigl(\sum_{\lambda \in \Lambda}I_\lambda\Bigr)^2 = \sum_{\lambda\in\Lambda}I_\lambda^2 = \sum_{\lambda\in\Lambda}I_\lambda, 
\]
which is a semiprime ideal again by \autoref{prp:CharSemiprime}. 

\medskip

Let us verify that $I^\infty = \bigcap_{n \geq 1} I^n$.
Since $I^\infty$ is semiprime, we have $(I^\infty)^n = I^\infty$ for every $n\in\NN$ with $n \geq 1$ by \autoref{prp:CharSemiprime}.
We also have $I^\infty \subseteq I$, and therefore
\[
I^\infty = (I^\infty)^n \subseteq I^n.
\]
Since this holds for every $n \geq 1$, we get $I^\infty \subseteq \bigcap_{n \geq 1} I^n$.

Set $J := \interior(I)$ and $K := \exterior(I)$.
By \autoref{prp:IntExt}, we have
\[
\ldots \subseteq J^2 \subseteq I^2 \subseteq K^2 \subseteq J \subseteq I \subseteq K.
\]
It follows that $\bigcap_{n \geq 1} I^n = \bigcap_{n \geq 1} K^n = \bigcap_{n \geq 1} J^n$, and in particular $\bigcap_{n \geq 1} I^n$ is a Dixmier ideal.
By \autoref{prp:LatticeDixmierIdeals}, we see 
\[
\Bigl( \bigcap_{n \geq 1} K^n \Bigr)^2 
= \bigcap_{n \geq 1} K^{2n} 
= \bigcap_{n \geq 1} K^{n}, 
\]
which implies $\bigcap_{n \geq 1} K^{n} = \bigcap_{n \geq 1} I^{n}$ is idempotent and thus semiprime by \autoref{prp:CharSemiprime}. 
Since it is contained in $I$, and $I^\infty$ is the largest semiprime ideal contained in $I$, we conclude that $\bigcap_{n \geq 1} I^n \subseteq I^\infty$.

Finally, using \autoref{prp:CharRootDixmierIdeal}, we observe 
\[
\bigcap_{n \geq 1} I^{n} 
= \bigcap_{n \geq 1} J^{\frac{n}{2}} 
\subseteq \big\{ a\in A : (a^*a)^{\frac{1}{n}}\in I_+ \text{ for every $n$} \big\}
\subseteq \bigcap_{n \geq 1} K^{\frac{n}{2}} 
= \bigcap_{n \geq 1} I^{n} , 
\]
and all of them must be equal. 
The remaining expression of $I^\infty$ can be verified similarly. 
\end{proof}

%==========================================================================================
\begin{rmks}
\label{rmk:LatticeSemiprimeIdeals}
(1)
The norm-closed ideals in a \ca{} $A$ form a complete lattice, but in general it is not a complete sublattice of the lattice of ideals in~$A$.
Indeed, given a family $(I_\lambda)_j$ of norm-closed ideals, the ideal $\sum_{\lambda\in\Lambda} I_\lambda$ is not necessarily norm-closed.
The supremum of this family in the lattice of norm-closed ideals is the closure $\overline{\sum_{\lambda\in\Lambda} I_\lambda}$.
Consequently, an ideal in a \ca{} typically does not contain a largest norm-closed ideal. For example, consider $c_c(\NN)\subseteq c_0(\NN)$.
From that perspective, semiprime ideals form a better behaved class than the norm-closed ideals.

(2)
\autoref{prp:LatticeSemiprimeIdeals} shows in particular that the union of an ascending chain of semiprime ideals in a \ca{} is again semiprime.
This property is known to hold in commutative rings, but fails in general noncommutative rings; 
see \cite{GreRowVis16ChainsPrime}.

It would be interesting to determine if the union of an ascending chain of prime ideals in a \ca{} is again prime.
\end{rmks}

%==========================================================================================
If $I \subseteq A$ is a norm-closed ideal in a \ca, and $B \subseteq A$ is a \csuba{}, then $I \cap B$ is a norm-closed ideal in $B$.
We show that an analogous result holds for semiprime ideals.

%==========================================================================================
\begin{prp}
\label{prp:IntertsectionSemiprimeSubalgebra}
Let $I \subseteq A$ be a semiprime ideal in a \ca, and let $B \subseteq A$ be a \csuba{}.
Then $I \cap B$ is a semiprime ideal in $B$.
\end{prp}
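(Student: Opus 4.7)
The plan is to exploit the characterization in \autoref{prp:CharSemiprime} that semiprimeness of an ideal in a \ca{} is equivalent to being closed under square roots of positive elements. This condition is entirely intrinsic to the positive cone and behaves well under passing to \csuba{s}, which makes the argument essentially immediate, while the original ring-theoretic definition (involving all ideals of $B$) would be much harder to verify directly since ideals of $B$ need not extend to ideals of $A$.

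First I would verify that $I \cap B$ is an ideal in $B$: given $x \in I \cap B$ and $b \in B \subseteq A$, the products $bx$ and $xb$ lie in $B$ because $B$ is a subalgebra and in $I$ because $I$ is an ideal of $A$; additive closure is immediate. Thus $I \cap B$ is a two-sided ideal of $B$.

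Next I would check that $I \cap B$ is closed under square roots of positive elements of $B$. Let $a \in (I \cap B)_+ = I_+ \cap B_+$. Applying the implication (1)$\Rightarrow$(6) of \autoref{prp:CharSemiprime} to the semiprime ideal $I \subseteq A$, we obtain $a^{\frac{1}{2}} \in I_+$, where the square root is computed in $A$ via continuous functional calculus. Since $B$ is a \csuba{} of $A$, the element $a^{\frac{1}{2}}$ also lies in $B_+$, so $a^{\frac{1}{2}} \in (I \cap B)_+$.

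Finally, applying the implication (6)$\Rightarrow$(1) of \autoref{prp:CharSemiprime} to the ideal $I \cap B$ in the \ca{} $B$, we conclude that $I \cap B$ is a semiprime ideal of $B$. There is no real obstacle here; the only conceptual point is that we pass back and forth through the equivalence of \autoref{prp:CharSemiprime} in order to replace an intrinsically ring-theoretic property (semiprimeness) with a condition (closure under square roots) that respects \csuba{s} via functional calculus.
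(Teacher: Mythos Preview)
Your proof is correct and follows essentially the same approach as the paper: both arguments invoke \autoref{prp:CharSemiprime} to replace semiprimeness by closure under roots of positive elements, and then observe that this property passes to $I \cap B$ because both $I$ (by semiprimeness) and $B$ (by functional calculus) are closed under roots. Your write-up is simply a more explicit rendering of the paper's two-sentence proof.
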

\begin{proof}
By \autoref{prp:CharSemiprime}, it suffices to show that $I \cap B$ is closed under roots of positive elements.
This follows easily, since both $I$ and $B$ have this property.
\end{proof}

%==========================================================================================
\begin{rmk}
The analog of \autoref{prp:IntertsectionSemiprimeSubalgebra} for prime ideals does not hold.
For example, take any unital, simple \ca{} $C$ with a \csuba{} $D \subseteq C$ with $D \cong C([0,1])$.
Then consider the direct sum $A := C \oplus C$, the \csuba{} $B := D \oplus D$, and the ideal $I := C \oplus 0$.
Then $I$ is maximal and therefore prime.
But $I \cap B$ is the ideal $C([0,1])\oplus 0$ inside $C([0,1])\oplus C([0,1])$, which is not prime.
\end{rmk}

%==========================================================================================
It is well-known that every maximal ideal in a \emph{unital} \ca{} is norm-closed.
Surprisingly, it is not known if this also holds in non-unital \ca{s}.
This was asked by Narutaka Ozawa (see also \cite[Question~2.8]{GarThi24PrimeIdealsCAlg}).
We note that maximal ideals in \ca{s} are prime and therefore enjoy the properties obtained in \autoref{prp:SemiprimeDixmier}, which are well-known to hold for norm-closed ideals.

%==========================================================================================
\begin{prp}
\label{prp:MaximalIdeal}
Maximal ideals in \ca{s} are prime. 
In particular, they are always positively spanned (hence self-adjoint and closed under scalar multiplication) and hereditary.
\end{prp}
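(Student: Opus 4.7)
My plan is to first establish that maximal ideals are prime, from which the remaining assertions follow immediately: prime ideals are semiprime by definition, and by \autoref{prp:SemiprimeDixmier}, semiprime ideals in \ca{s} are Dixmier, hence positively spanned (and thus self-adjoint and closed under scalar multiplication) and hereditary.

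To prove that a maximal ideal $M \subseteq A$ is prime, I would adapt the classical ring-theoretic argument, being careful because $A$ may be non-unital and because ideals in \ca{s} are not assumed to be closed under scalar multiplication. Suppose for contradiction that ideals $J, K \subseteq A$ satisfy $JK \subseteq M$ while neither $J$ nor $K$ is contained in $M$. Then $M + J$ and $M + K$ are ideals strictly containing $M$, so by maximality $M + J = M + K = A$. Expanding this product gives
\[
A \cdot A = (M+J)(M+K) \subseteq M \cdot M + M \cdot K + J \cdot M + JK \subseteq M,
\]
where the last inclusion uses that $M$ is two-sided together with the hypothesis $JK \subseteq M$.

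To close the argument I need $A \cdot A = A$, and this is where the \ca{ic} structure enters. One route is to observe that $A$ is trivially a semiprime ideal of itself, so \autoref{prp:CharSemiprime} yields $A = A^2$. A more hands-on route uses Polar Decomposition (\autoref{prp:PolarRiesz}~(1)): for any $a \in A$ there is $v \in A^{**}$ such that $z := v|a|^{\frac{1}{2}}$ lies in $A$, whence $a = z \cdot |a|^{\frac{1}{2}} \in A \cdot A$. Either way, the displayed chain gives $A \subseteq M$, contradicting $M \neq A$.

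The only real subtlety is that in a general non-unital ring, maximal ideals need not be prime precisely when $A^2 \subsetneqq A$; so the entire argument rests on the idempotency of $A$, which is automatic for \ca{s}. Once this is noted, the verification is immediate and no further machinery from the paper is required.
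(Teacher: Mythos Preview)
Your proposal is correct and follows essentially the same route as the paper: both argue by contradiction that $A = A^2 = (M+J)(M+K) \subseteq M$, using idempotency of the \ca{} $A$, and then invoke \autoref{prp:SemiprimeDixmier} for the remaining assertions. Your write-up is in fact slightly more detailed, since you justify $A^2 = A$ (via \autoref{prp:CharSemiprime} or polar decomposition), whereas the paper simply uses it as a known fact about \ca{s}.
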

\begin{proof}
It is a folklore result that maximal ideals in idempotent rings are prime.
We include the details for the convenience of the reader.
Let $I \subseteq A$ be a maximal ideal in a \ca.
To show that $I$ is prime, let $J,K \subseteq A$ be ideals such that $JK \subseteq I$.

To reach a contradiction, assume that neither $J$ nor $K$ are contained in $I$.
Since~$I$ is a maximal ideal, this implies that $I+J=A$ and $I+K=A$.
But then
\[
A = A^2 = (I+J)(I+K)
\subseteq I,
\]
a contradiction.
Now the result follows from \autoref{prp:SemiprimeDixmier}.
\end{proof}

%==========================================================================================
%==========================================================================================
%\bibliographystyle{../../aomalphaMyShort}
%\bibliography{../../References}

\providecommand{\etalchar}[1]{$^{#1}$}
\providecommand{\bysame}{\leavevmode\hbox to3em{\hrulefill}\thinspace}

\end{document}